\newtheorem{defn}{Definition}
\newtheorem{thm}[defn]{Theorem}
\newtheorem{lem}[defn]{Lemma}
\newtheorem{cor}[defn]{Corollary}
\renewcommand{\P}{\mathbb{P}}
\newcommand{\E}{\mathbb E}
\newcommand{\N}{\mathbb N}
\newcommand{\Q}{\mathbb Q}
\newcommand{\R}{\mathbb R}
\begin{document}

\title{On arbitrarily slow convergence rates 
for strong numerical approximations of 
Cox-Ingersoll-Ross processes and
squared Bessel processes}

\author{Mario Hefter and Arnulf Jentzen
\medskip
\\
\small{TU Kaiserslautern (Germany) and ETH Zurich (Switzerland)}}

\maketitle

\begin{abstract}
Cox-Ingersoll-Ross (CIR) processes are extensively used in state-of-the-art models for the approximative pricing of financial derivatives.
In particular, CIR processes are day after day employed to model instantaneous variances (squared volatilities) of foreign exchange rates 
and stock prices in Heston-type models and 
they are also intensively used to model short-rate interest rates. 
The prices of the financial derivatives in the above mentioned models 
are very often approximately computed by means of
explicit or implicit Euler- or Milstein-type discretization methods based on equidistant evaluations of the driving noise processes.
In this article we study the strong convergence speeds of all such discretization methods.
More specifically,
the main result of this article reveals that
each such discretization method achieves at most a strong convergence order of $\delta/2$,
where $0<\delta<2$ is the dimension of the squared Bessel process associated to the considered CIR process.
In particular, we thereby reveal that
discretization methods currently employed in the financial industry 
may converge with arbitrarily slow strong convergence rates to 
the solution of the considered CIR process.
We thereby lay open the need of the development of 
other more sophisticated approximation methods which
are capable to solve CIR processes in the strong sense 
in a reasonable computational time and which thus can not
belong to the class of algorithms which use equidistant evaluations of the driving noise processes.
\end{abstract}

\tableofcontents

\section{Introduction}
Stochastic differential equations (SDEs) are a key ingredient in a number 
of models from economics and the natural sciences.
In particular, SDE based models are day after day used in the 
financial engineering industry to approximately compute prices
of financial derivatives. The SDEs appearing in such models are 
typically highly nonlinear and contain non-Lipschitz nonlinearities in the
drift or diffusion coefficient.
Such SDEs can in almost all cases not be solved explicitly and it has been and still is a very active topic of 
research to approximate SDEs with non-Lipschitz nonlinearities; 
see, e.g.,
Hu~\cite{MR1396331},
Gy{\"o}ngy~\cite{MR1625576},
Higham,
Mao, \&
Stuart~\cite{MR1949404},
Hutzenthaler,
Jentzen, \&
Kloeden~\cite{MR2985171},
Hutzenthaler \&
Jentzen~\cite{MR3364862},
Sabanis~\cite{MR3070913,MR3543890},
and the references 
mentioned therein.
In particular, in about the last five years
several results have been obtained that demonstrate that approximation 
schemes may converge arbitrarily slow,
see
Hairer,
Hutzenthaler, \&
Jentzen~\cite{MR3305998},
Jentzen,
M{\"u}ller-Gronbach, \&
Yaroslavtseva~\cite{MR3538358},
Yaroslavtseva \&
M{\"u}ller-Gronbach~\cite{2016arXiv160308686M},
Yaroslavtseva~\cite{2016arXiv160908073Y},
and
Gerencs{\'e}r,
Jentzen, \&
Salimova\cite{2017arXiv170203229G}.
For example, Theorem~1.2 in \cite{MR3538358} demonstrates that there exists an
SDE that has solutions with all moments bounded
but for which all approximation schemes that use only evaluation points of the driving Brownian motion converge in the strong sense with an arbitrarily slow rate;
see also
\cite[Theorem~1.3]{MR3305998},
\cite[Theorem~3]{2016arXiv160308686M},
\cite[Theorem~1]{2016arXiv160908073Y}, and
\cite[Theorem~1.2]{2017arXiv170203229G}
for related results.
All the SDEs in the above examples are purely academic with no connection to applications.
The key contribution 
of this work is to reveal that such slow convergence phenomena 
also arise in concrete models from applications. 
To be more specific, in this work we reveal that
Cox-Ingersoll-Ross (CIR) processes and squared Bessel processes
can in the strong sense in general not be
solved approximately in a reasonable computational time
by means of schemes using equidistant evaluations of the driving Brownian 
motion.
The precise formulation of our result is the subject of 
the following theorem.

\begin{thm}[Cox-Ingersoll-Ross processes]\label{maintheo}
Let 
$ T, a, \sigma \in (0,\infty) $, 
$ b, x \in [0,\infty) $
satisfy $ 2 a < \sigma^2 $,
let
$ 
  ( \Omega, \mathfrak{F}, \P ) 
$
be a probability space 
with a normal filtration $ ( \mathbb{F}_t )_{ t \in [0,T] } $,
let 
$ 
  W \colon [0,T] \times \Omega \to \R 
$
be a
$( \mathbb{F}_t )_{ t \in [0,T] }$-Brownian motion,
let $ X \colon [0,T] \times \Omega \to [0,\infty)$
be a $ ( \mathbb{F}_t )_{ t \in [0,T] } $-adapted stochastic process 
with continuous sample paths which satisfies 
for all $ t \in [0,T] $ 
$ \P $-a.s.\ that
\begin{equation}\label{besselsdeintro}
  X_t
  =
  x
  + 
  \int_0^t
  \left(
    a - b X_s
  \right)\mathrm{d}s
  +
  \int_0^t
  \sigma
  \sqrt{ X_s } \,\mathrm{d}W_s
  .
\end{equation}
Then there exists a real number $ c\in (0,\infty) $ such that 
for all $ N \in \N $ it holds that
\begin{equation}\label{maininequ}
  \inf_{
    \substack{
      \varphi \colon \R^N \to \R
    \\
      \text{Borel-measurable}
    }
  }
  \E\!\left[ 
    \big|
      X_T
      -
      \varphi( 
        W_{ \frac{ T }{ N } }, W_{ \frac{ 2 T }{ N } }, \dots, 
        W_T
      )
    \big|
  \right]
  \geq 
  c
  \cdot 
  N^{ 
    - 
    ( 2 a ) / \sigma^2 
  }.
\end{equation}
\end{thm}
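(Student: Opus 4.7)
The plan is to bound the approximation error below via an information-theoretic reduction combined with a quantitative estimate on squared Bessel processes of subcritical dimension $\delta := 4a/\sigma^2 \in (0,2)$, which is the regime enforced by the hypothesis $2a < \sigma^2$. The first, elementary, observation is that for any random variable $Z \in L^1$ and any $\sigma$-algebra $\mathcal{G}$ the $L^1$-optimal $\mathcal{G}$-measurable predictor is the conditional median; applied with $\mathcal{G}_N := \sigma(W_{T/N}, \ldots, W_T)$ this yields
\[
  \inf_{\varphi} \E\bigl| X_T - \varphi(W_{T/N}, \ldots, W_T) \bigr|
  \;=\;
  \E\bigl[\, \min_{c \in \R} \E\bigl[\, |X_T - c| \,\big|\, \mathcal{G}_N \bigr] \,\bigr],
\]
so the task reduces to showing that the conditional law of $X_T$ given $\mathcal{G}_N$ has average $L^1$-spread at least $c\, N^{-2a/\sigma^2}$.

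A convenient next reduction is to pass to the pure squared Bessel case ($b = 0$): the linear mean-reversion term $-b X_s\,\mathrm{d}s$ can be removed by a Girsanov change of measure whose Radon--Nikodym density has finite moments of every order in the subcritical regime, so lower bounds on $\E|X_T - \varphi(\cdot)|$ under the original law and under the Bessel law are equivalent up to multiplicative constants. After this reduction $X$ is, up to the linear change of variables $Y = 4X/\sigma^2$, a squared Bessel process of dimension $\delta$, for which I can use the explicit gamma-type marginal densities together with the scaling identity $(Y_{ct})_{t\geq 0} \stackrel{d}{=} (c Y_t)_{t\geq 0}$ when started from zero.

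The core of the argument is then the following probabilistic estimate: there exists a subinterval $[(k-1)T/N,\,kT/N]$ and a $\mathcal{G}_N$-measurable event on which the conditional law of $X_T$ given $\mathcal{G}_N$ has $L^1$-spread at least $c\, N^{-\delta/2}$. The mechanism is that the discrete Brownian observations cannot resolve, within any single subinterval, whether $X$ rests at zero, performs a short positive excursion, or traverses the subinterval while staying positive, and these admissible alternatives produce $X_T$-values that are separated by an amount governed by the Bessel scaling. I would make this precise either by a direct squared-Bessel-bridge computation using the explicit transition densities, or by an excursion-theoretic argument based on It\^o's measure for Bessel excursions of subcritical dimension.

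The main obstacle is calibrating the candidate event so as to land exactly on the exponent $\delta/2$ in the lower bound. The combinatorial aspects (passage from a single subinterval to the global bound, undoing the Girsanov change of measure, summing or taking a maximum over $k$) are essentially routine, but exhibiting two admissible path laws on $[(k-1)T/N,\,kT/N]$ that are consistent with the \emph{same} Brownian endpoint yet whose $X_T$-marginals are separated in $L^1$ by exactly $c\, N^{-\delta/2}$ requires careful tuning: the event must trade off its probability against the conditional spread so that the product yields the Bessel exponent rather than a generic rate like $N^{-1/2}$ or $N^{-1}$. I expect the correct calibration to come from matching the marginal density of $X$ near zero at scale $1/N$ with the spatial self-similarity of the driftless squared Bessel process of dimension $\delta$.
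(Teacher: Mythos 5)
Your high-level plan correctly identifies the two essential ingredients: a two-alternative coupling (two driving paths coinciding at all grid points but producing $L^1$-separated solutions) and the exponent $\delta/2$ arising from hitting-time and scaling properties of the subcritical squared Bessel process of dimension $\delta = 4a/\sigma^2$. However, there is a genuine gap precisely at what you yourself flag as ``the main obstacle'': you never actually construct the two admissible path laws, and this construction is the heart of the proof. The paper's construction (Section~\ref{setproc}) tracks the solution to a random hitting time of zero, then in the first grid subinterval after that time inserts either the original Brownian path or a linearly interpolated increment plus an independent rescaled Brownian bridge (the maps $F_n^{\triangle}$, $F_n^{\Box}$, $G_n^{\ast}$). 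Making this rigorous requires the stopping-time concatenation machinery of Section~\ref{Basicsogeneral} to verify that the resulting processes $W^{(n),\triangle}$, $W^{(n),\Box}$ are Brownian motions (Lemma~\ref{lembasic1}) agreeing at all grid points (Lemma~\ref{eq4l}), a conditional-distribution computation given the hitting time (Lemma~\ref{lembasic2}), a uniform positivity and continuity argument for the $L^1$-difference of the inserted alternatives (Lemmas~\ref{lemdistb} and~\ref{lemdist}), and the hitting-time asymptotic $\P(\inf_{t\in[\varepsilon,T]} Z_t^{0,\delta,b,W}>0) \leq c\,\varepsilon^{1-\delta/2}$ (Lemma~\ref{hitnotzero}) to produce the exponent. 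Neither of the two routes you sketch --- a Bessel-bridge density computation or excursion theory --- is what the paper uses, and both would still have to be developed to land on the exact rate; so the calibration problem you name remains unsolved in your proposal, and the reduction to the conditional median, while correct, does not by itself advance that calibration (the paper instead uses the more direct triangle-inequality coupling bound $\E|Z^\triangle - \varphi| + \E|Z^\Box - \varphi| \geq \E|Z^\triangle - Z^\Box|$ once $\varphi$ depends only on grid values).

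Separately, the Girsanov reduction to $b=0$ deviates from the paper and is not obviously sound as stated: you assert the Radon--Nikodym density has finite moments of every order, but for a CIR process the exponential moments of $\int_0^T X_s\,\mathrm{d}s$, which govern the moments of that density, are finite only below a critical parameter threshold, so this claim requires justification or weakening. The paper avoids Girsanov entirely: the drift $b$ is carried along throughout and handled via the comparison principle (Lemma~\ref{lemcomparison}), the scaling identity (Lemma~\ref{lem101}), and $L^1$-Lipschitz continuity in the initial value (Lemma~\ref{lipsc}); the reduction from general $(T,a,\sigma,b,x)$ to the normalized case $\delta \in (0,2)$, $T=1$, $\sigma=2$ in the proof of Theorem~\ref{thmgenpar} is a purely deterministic time-and-space rescaling that introduces no change of measure.
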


Theorem~\ref{maintheo} is an immediate consequence of Theorem~\ref{thmgenpar} in Section~\ref{last} below.
Upper error bounds for strong approximation of CIR processes and squared Bessel processes,
i.e., the opposite question of Theorem~\ref{maintheo},
have been intensively studied in the literature;
see, e.g.,
Delstra \& Delbaan~\cite{MR1641781},
Alfonsi~\cite{MR2186814},
Higham \& Mao~\cite{strathprints160},
Berkaoui, Bossy, \& Diop~\cite{MR2367990},
Gy{\"o}ngy \& R{\'a}sonyi~\cite{MR2822773},
Dereich, Neuenkirch, \& Szpruch~\cite{MR2898556},
Alfonsi~\cite{MR3006996},
Hutzenthaler, Jentzen, \& Noll~\cite{2014arXiv1403.6385H},
Neuenkirch \& Szpruch~\cite{MR3248050},
Bossy \& Olivero Quinteros~\cite{2015arXiv150804581B},
Hutzenthaler \& Jentzen~\cite{MR3364862},
Chassagneux, Jacquier, \& Mihaylov~\cite{MR3582409},
Hefter \& Herzwurm~\cite{2016arXiv160101455H}, and
Hefter \& Herzwurm~\cite{2016arXiv160800410H}
(for further approximation results,
see, e.g.,
Milstein \& Schoenmakers~\cite{MR3433299},
Cozma \& Reisinger~\cite{2016arXiv160100919C},
and
Kelly \& Lord~\cite{2016arXiv161004003K}).
In the following we relate our result to these results.

Using the truncated Milstein scheme with the corresponding error bound from
Hefter \& Herzwurm~\cite{2016arXiv160800410H}
we get that the the lower bound obtained in
\eqref{maininequ}
is essentially sharp.
The precise formulation of this observation is the subject of the following corollary.

\begin{cor}[Cox-Ingersoll-Ross processes]
\label{cor:CIR}
Let 
$ T, a, \sigma \in (0,\infty) $, 
$ b, x \in [0,\infty) $
satisfy $4a<\sigma^2$,
let 
$ 
  ( \Omega, \mathfrak{F}, \P ) 
$
be a probability space 
with a normal filtration $ ( \mathbb{F}_t )_{ t \in [0,T] } $,
let 
$ 
  W \colon [0,T] \times \Omega \to \R 
$
be a
$ ( \mathbb{F}_t )_{ t \in [0,T] } $-Brownian motion,
let $ X \colon [0,T] \times \Omega \to [0,\infty)$
be a $ ( \mathbb{F}_t )_{ t \in [0,T] } $-adapted stochastic process 
with continuous sample paths which satisfies 
for all $ t \in [0,T] $ 
$ \P $-a.s.\ that
\begin{equation}
  X_t
  =
  x
  + 
  \int_0^t
  \left(
    a - b X_s
  \right)\mathrm{d}s
  +
  \int_0^t
  \sigma
  \sqrt{ X_s } \,\mathrm{d}W_s
  .
\end{equation}
Then there exist real numbers $ c, C \in (0,\infty) $
such that for all $ N \in \N $
it holds that
\begin{equation}\label{endcor}
c
  \cdot 
  N^{ 
    - 
    \nicefrac{ 2 a }{ \sigma^2 }
  }
  \leq
  \inf_{
    \substack{
      \varphi \colon \R^N \to \R
    \\
      \text{Borel-measurable}
    }
  }
  \E\!\left[ 
    \big|
      X_T
      -
      \varphi( 
        W_{ \frac{ T }{ N } }, W_{ \frac{ 2 T }{ N } }, \dots, 
        W_T
      )
    \big|
  \right]
  \leq
  C
  \cdot 
  N^{ 
    - 
    \nicefrac{ 2 a }{ \sigma^2 }
  }. 
\end{equation}
\end{cor}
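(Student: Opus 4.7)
My plan is to observe that the corollary is essentially a concatenation of two results: the lower bound comes directly from Theorem~\ref{maintheo}, which applies without modification since the hypothesis $4a<\sigma^2$ of Corollary~\ref{cor:CIR} is strictly stronger than the hypothesis $2a<\sigma^2$ of Theorem~\ref{maintheo}. Hence there exists $c\in(0,\infty)$ such that the left-hand inequality in~\eqref{endcor} holds for every $N\in\N$, and no new work is required on this side.

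For the upper bound I would invoke the strong error estimate for the truncated Milstein scheme established in Hefter~\&~Herzwurm~\cite{2016arXiv160800410H}. Concretely, for each $N\in\N$ one defines the piecewise approximation $(Y^N_k)_{k\in\{0,1,\ldots,N\}}$ by $Y^N_0=x$ and, for $k\in\{0,1,\ldots,N-1\}$, by an explicit (truncated) Milstein-type update of the form
\begin{equation*}
 Y^N_{k+1}
 =
 F_N\!\left(Y^N_k,\,W_{(k+1)T/N}-W_{kT/N}\right)
\end{equation*}
for a suitable Borel-measurable $F_N\colon\R\times\R\to\R$ that incorporates the truncation at zero and the Milstein correction. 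Iterating this, $Y^N_N$ is a Borel-measurable function $\varphi_N\colon\R^N\to\R$ of the increments $W_{T/N},W_{2T/N}-W_{T/N},\ldots,W_T-W_{(N-1)T/N}$, and since these increments are themselves a Borel-measurable function of $(W_{T/N},W_{2T/N},\ldots,W_T)$, we may re-express $Y^N_N$ as $\tilde\varphi_N(W_{T/N},\ldots,W_T)$ for some Borel-measurable $\tilde\varphi_N\colon\R^N\to\R$. This $\tilde\varphi_N$ is then an admissible choice in the infimum appearing in~\eqref{endcor}, so the infimum is bounded above by $\E[|X_T-Y^N_N|]$.

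It remains to cite the main strong convergence theorem from~\cite{2016arXiv160800410H}, which under the hypothesis $4a<\sigma^2$ yields a constant $C\in(0,\infty)$, independent of $N$, such that $\E[|X_T-Y^N_N|]\le C\cdot N^{-2a/\sigma^2}$ for all $N\in\N$. Combining this with the preceding paragraph yields the right-hand inequality in~\eqref{endcor}, and together with the lower bound from Theorem~\ref{maintheo} completes the proof.

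The only subtle point, and what I expect to be the main obstacle to a clean write-up, is the discrepancy between the hypothesis $2a<\sigma^2$ sufficient for the lower bound and the more restrictive hypothesis $4a<\sigma^2$ (which is the Feller-type regime under which the available upper error bound is sharp at rate $2a/\sigma^2$) needed to invoke~\cite{2016arXiv160800410H}. This discrepancy is exactly why Corollary~\ref{cor:CIR} is stated under the stronger assumption; one just has to be explicit in the proof about which reference supplies which inequality and verify that~\cite{2016arXiv160800410H} indeed delivers precisely the exponent $2a/\sigma^2$ rather than some smaller rate.
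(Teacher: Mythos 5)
Your proof is correct and follows exactly the same route as the paper: the lower bound is an immediate consequence of Theorem~\ref{maintheo} (whose hypothesis $2a<\sigma^2$ is weaker than $4a<\sigma^2$), and the upper bound is obtained from the truncated Milstein scheme via Hefter \& Herzwurm~\cite[Theorem~2]{2016arXiv160800410H}, together with the elementary observation that the resulting estimator is a Borel-measurable function of $(W_{T/N},\ldots,W_T)$. Your remarks on expressing the Milstein output through the grid values and on why the corollary requires the stronger hypothesis $4a<\sigma^2$ are useful elaborations, but they do not alter the approach.
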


The lower bound in \eqref{endcor}
is an immediate consequence of
Theorem~\ref{maintheo}
and the upper bound in \eqref{endcor}
is an immediate consequence of
Hefter \& Herzwurm~\cite[Theorem~2]{2016arXiv160800410H} using the truncated Milstein scheme.
We conjecture that in the full parameter range $a, \sigma \in (0,\infty) $ the convergence order in \eqref{endcor} is equal to $\min\{2a/\sigma^2,1\}$,
since for scalar SDEs with coefficients satisfying standard assumptions a convergence order of one is optimal;
see, e.g.,
Hofmann, M\"uller-Gronbach, \& Ritter~\cite{MR1817611}
and
M\"uller-Gronbach~\cite{MR2099646}.
Upper and lower error bounds for CIR processes are crucial due to the fact that CIR processes are a key ingredient in several models for the approximative pricing of financial derivatives on
stocks
(see, e.g.,
Heston~\cite{heston1993closed}),
interest rates
(see, e.g.,
Cox, Ingersoll, \& Ross~\cite{MR785475}),
and
foreign exchange markets
(see, e.g.,
Cozma \& Reisinger~\cite{2015arXiv150901479C}).

The remainder of this article is organized as follows.
In Section~\ref{basicsection} we review a few elementary properties of CIR processes and squared Bessel processes.
In Section~\ref{Basicsogeneral} we present some basic results for general SDEs.
In Section~\ref{preliminaries} we prove the lower error bound for a specific parameter range, which is then generalized in Section~\ref{last}.

\section{Basics of Cox-Ingersoll-Ross (CIR) processes and squared Bessel processes}
\label{basicsection}

\subsection{Setting}
\label{setbasic}

Let $(\Omega,\mathfrak{F},\P)$ be a complete probability space,
let $W\colon [0,\infty)\times\Omega \to\R$ be a Brownian motion,
and for every $\delta\in (0,\infty)$, $b, z \in [0,\infty)$
and every Brownian motion 
$V\colon [0,\infty)\times \Omega\to\R$ 
let $Z^{z,\delta,b,V}\colon [0,\infty) \times \Omega \to [0,\infty)$ 
be a $(\sigma_{ \Omega }(\{\{V_s\leq a\}\colon a\in\R, s\in [0,t]\}\cup\{A\in\mathfrak{F}\colon \P(A)=0\}))_{t\in [0,\infty)}$-adapted stochastic process with continuous sample paths 
which satisfies that for all $ t \in [0,\infty)$ it holds $ \P $-a.s.\ that
\begin{equation}
\label{BesselSDE}
  Z^{ z, \delta, b, V }_t
  = 
  z + 
  \int_0^t
  \left(
    \delta - b \, Z^{ z, \delta, b, V }_s
  \right)
  \mathrm{d}s
  +
  \int_0^t
    2
    \,
    \sqrt{ 
      Z^{ z, \delta, b, V }_s 
    } 
  \, \mathrm{d}W_s
  .
\end{equation}

\subsection{A comparison principle}

\begin{lem}
\label{lemcomparison}
Assume the setting in Section~\ref{setbasic} 
and
let $\delta\in (0,\infty)$,
$b_1,b_2, z_1,z_2 \in [0,\infty)$ satisfy $z_1\leq z_2$ and $b_1\geq b_2$.
Then
\begin{equation}
\label{eq:comparison}
  \P\!\left(
    \forall \, t \in [0,\infty)
    \colon
    Z_t^{ z_1, \delta, b_1, W }
    \leq 
    Z_t^{ z_2, \delta, b_2, W }
  \right)
  = 1 .
\end{equation}
\end{lem}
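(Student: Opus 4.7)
The plan is to run the classical Yamada--Watanabe comparison argument tailored to the square-root diffusion coefficient. Set $Y^{(i)}_t := Z^{z_i,\delta,b_i,W}_t$ for $i\in\{1,2\}$ and $D_t := Y^{(1)}_t - Y^{(2)}_t$. Subtracting the two copies of \eqref{BesselSDE} driven by the same $W$ yields, $\P$-a.s.\ for all $t\in[0,\infty)$,
\[
  D_t = (z_1-z_2) + \int_0^t\!\bigl(-b_1 Y^{(1)}_s + b_2 Y^{(2)}_s\bigr)\,\mathrm{d}s + 2\int_0^t\!\bigl(\sqrt{Y^{(1)}_s}-\sqrt{Y^{(2)}_s}\bigr)\,\mathrm{d}W_s,
\]
with $D_0 = z_1-z_2 \le 0$. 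It therefore suffices to show $\E[D_t^+]=0$ for every $t\in[0,\infty)$: path-continuity of $D$ then upgrades this to $\P(\forall\,t\in[0,\infty)\colon D_t\le 0)=1$ by restricting to a countable dense subset of $[0,\infty)$.

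For each $n\in\N$ I pick a Yamada--Watanabe mollifier $\psi_n\in C^2(\R,\R)$ satisfying $\psi_n(x)=0$ for $x\le 0$, $\psi_n'(x)\in[0,1]$, $\psi_n(x)\uparrow x^+$ as $n\to\infty$, and $\psi_n''\ge 0$ with support in a shrinking interval close to $0$ such that $x\cdot\psi_n''(x)\le 2/n$ for all $x>0$ (this is the standard construction). To avoid integrability issues, introduce stopping times $\tau_m:=\inf\{t\ge 0\colon Y^{(1)}_t + Y^{(2)}_t\ge m\}$ for $m\in\N$; since both squared Bessel processes have continuous paths, $\tau_m\to\infty$ a.s. Applying It\^o's formula to $\psi_n(D_{t\wedge\tau_m})$, using $\psi_n(D_0)=0$, and noting that the stochastic integral is a true martingale on $[0,\tau_m]$ leaves, after taking expectations, only a drift integral and a quadratic-variation integral.

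Both terms are now controlled separately. Since $\psi_n'(D_s)>0$ forces $D_s>0$, on that set the drift satisfies
\[
  -b_1 Y^{(1)}_s + b_2 Y^{(2)}_s = -b_1 D_s - (b_1-b_2)\,Y^{(2)}_s \le 0,
\]
using $b_1\ge b_2\ge 0$ and $Y^{(2)}_s\ge 0$, so the drift contribution is non-positive. For the quadratic-variation contribution I combine the elementary bound $(\sqrt{u}-\sqrt{v})^2\le|u-v|$ for $u,v\ge 0$ with $D_s\,\psi_n''(D_s)\le 2/n$ on $\{D_s>0\}$ to estimate the integrand pointwise by $2/n$, producing a deterministic upper bound $4t/n$ for the entire term. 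Hence $\E[\psi_n(D_{t\wedge\tau_m})]\le 4t/n$; letting $m\to\infty$ via Fatou and then $n\to\infty$ via monotone convergence $\psi_n\uparrow(\cdot)^+$ gives $\E[D_t^+]=0$, finishing the proof.

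The main obstacle is the non-Lipschitz, only $\tfrac12$-H\"older, diffusion coefficient $2\sqrt{\cdot}$, but this is exactly what the Yamada--Watanabe mollifier was designed to handle: the bound $x\,\psi_n''(x)\le 2/n$ is calibrated to offset the growth $(\sqrt{u}-\sqrt{v})^2\le |u-v|$. The hypothesis $b_1\ge b_2$ is absorbed cleanly using the pointwise non-negativity of the squared Bessel process, and the comparison of initial values $z_1\le z_2$ enters only through $\psi_n(D_0)=0$.
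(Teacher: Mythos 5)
Your proof is correct and, at the level of technique, is the same Yamada--Watanabe comparison argument that underlies the reference the paper cites (Karatzas \& Shreve, Proposition~5.2.18); the paper simply invokes that proposition as a black box, while you carry out the argument in full.

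One small point worth noting: as stated, Karatzas \& Shreve's comparison theorem requires the drift inequality $\delta - b_1 x \le \delta - b_2 x$ to hold for \emph{all} $x\in\R$, but with $b_1 \ge b_2$ this holds only for $x\ge 0$; a direct citation therefore requires the standard cosmetic fix of replacing $x$ by $\max\{x,0\}$ in both drifts (harmless since the processes are non-negative). Your argument sidesteps this cleanly because the drift estimate $-b_1 Y^{(1)}_s + b_2 Y^{(2)}_s = -b_1 D_s - (b_1-b_2)Y^{(2)}_s \le 0$ on $\{D_s>0\}$ only uses the pointwise non-negativity of $Y^{(2)}$ along the actual sample paths rather than a global monotonicity of the coefficients. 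In that sense your write-up is a touch more self-contained than the paper's one-line reference. The remaining steps (localization by $\tau_m$, the bound $(\sqrt u-\sqrt v)^2\le|u-v|$ paired with $x\,\psi_n''(x)\le 2/n$, Fatou in $m$, monotone/dominated convergence in $n$ using the first-moment bound from Lemma~\ref{lemboundedmom} to justify $\E[D_t^+]<\infty$, and the upgrade from fixed $t$ to all $t$ via a countable dense set and path continuity) are all correct.
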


\begin{proof}[Proof of Lemma~\ref{lemcomparison}]
Equation~\eqref{eq:comparison} is, e.g., an immediate consequence 
of Karatzas \& Shreve~\cite[Proposition~5.2.18]{MR1121940}.
The proof of Lemma~\ref{lemcomparison} is thus completed.
\end{proof}

\subsection{A priori moment bounds}

\begin{lem}
\label{lemboundedmom}
Assume the setting in Section~\ref{setbasic}
and
let $\delta, T\in (0,\infty)$,
$b,z\in [0,\infty)$,
$p \in [1, \infty)$.
Then
\begin{equation}
\label{eq:exp_bound}
  \E\!\left[
    \sup_{ t \in [0,T] }
    \big|
      Z_t^{ z, \delta, b, W }
    \big|^p 
  \right]
  < \infty
  .
\end{equation}
\end{lem}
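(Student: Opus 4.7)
The plan is to first reduce the statement to the squared Bessel case $b=0$ via monotonicity, and then to obtain moment bounds by the standard combination of It\^o's formula, Gronwall's lemma, and the Burkholder--Davis--Gundy (BDG) inequality. Concretely, I would apply Lemma~\ref{lemcomparison} with $b_1=b$, $b_2=0$, $z_1=z_2=z$, giving $Z_t^{z,\delta,b,W}\leq Z_t^{z,\delta,0,W}$ $\P$-a.s.\ for every $t\in[0,\infty)$. It therefore suffices to prove the bound for the squared Bessel process $Y_t:=Z_t^{z,\delta,0,W}$, which satisfies $dY_t=\delta\,dt+2\sqrt{Y_t}\,dW_t$. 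Furthermore, by Jensen's inequality I only need to handle $p=q$ with $q\in\N$ and $q\geq 2$; the case $q=1$ follows from taking expectations in \eqref{BesselSDE} after localization, yielding $\E[Y_t]\leq z+\delta t$.

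For a fixed integer $q\geq 2$, I would introduce the localizing stopping times $\tau_n:=\inf\{t\geq 0\colon Y_t\geq n\}$ and apply It\^o's formula to $f(y)=y^{q}$. Since $d\langle Y\rangle_t=4Y_t\,dt$, this produces the identity
\[
Y_{t\wedge\tau_n}^{q}=z^{q}+\bigl(q\delta+2q(q-1)\bigr)\int_{0}^{t\wedge\tau_n}Y_s^{q-1}\,ds+2q\int_{0}^{t\wedge\tau_n}Y_s^{q-1/2}\,dW_s.
\]
On $[0,\tau_n]$ the stochastic integrand is bounded, so the last term is a true martingale of mean zero. Taking expectations and then inducting on $q$ with Gronwall's lemma (the base case $q=1$ having already been established) shows that $\sup_{n\in\N}\sup_{t\in[0,T]}\E[Y_{t\wedge\tau_n}^{q}]\leq\kappa_q$ for some finite constant $\kappa_q$; monotone convergence as $n\to\infty$ then yields $\sup_{t\in[0,T]}\E[Y_t^{q}]\leq\kappa_q<\infty$.

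To upgrade from pointwise moments to the supremum, I would take $\sup_{t\in[0,T\wedge\tau_n]}$ inside the It\^o identity and apply the BDG inequality to the martingale term, obtaining constants $C_q,K_q\in(0,\infty)$ with
\[
\E\!\left[\sup_{t\in[0,T\wedge\tau_n]}\!Y_t^{q}\right]\leq z^{q}+C_q\int_{0}^{T}\kappa_{q-1}\,ds+K_q\,\E\!\left[\biggl(\int_{0}^{T\wedge\tau_n}\!Y_s^{2q-1}\,ds\biggr)^{\!1/2}\right].
\]
The main obstacle, and the only nontrivial step, is controlling this last term. My plan is to bound $Y_s^{2q-1}\leq\bigl(\sup_{u\leq s}Y_u^{q}\bigr)Y_s^{q-1}$, pull the supremum out of the square root, apply Cauchy--Schwarz, and then use the weighted Young inequality $ab\leq\tfrac{1}{2\varepsilon}a^{2}+\tfrac{\varepsilon}{2}b^{2}$ with an appropriately small $\varepsilon>0$ to absorb a fraction of $\E[\sup_{t\in[0,T\wedge\tau_n]}Y_t^{q}]$ into the left-hand side. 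The resulting $n$-uniform bound then gives \eqref{eq:exp_bound} upon sending $n\to\infty$ via monotone convergence.
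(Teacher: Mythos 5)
Your proof is correct and self-contained, but it is a genuinely different route from the paper's: the paper disposes of Lemma~\ref{lemboundedmom} with a single citation to Mao~\cite[Corollary~2.4.2]{MR1475218}, which is a general $L^p$-moment bound for one-dimensional SDEs with linearly growing coefficients (the CIR drift $\delta - bz$ and diffusion $2\sqrt{z}$ satisfy $|\delta-bz|^2 + 4|z| \le C(1+|z|^2)$, so the cited result applies directly without any reduction). Your argument rederives that bound from scratch, and it has one nice extra structural idea the citation route does not need, namely invoking Lemma~\ref{lemcomparison} to reduce to $b=0$ first, which shortens the computation slightly. The rest — localization, It\^o applied to $y\mapsto y^q$ for integer $q\ge 2$, induction over $q$ on the pointwise moments, BDG plus Young for the supremum, and a limiting argument in $n$ — is the standard machinery underlying Mao's corollary, so at the level of ideas you have essentially reproved the textbook result.

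Two small cleanups you should make: in the pointwise-moment step the passage $n\to\infty$ should be justified by Fatou's lemma rather than monotone convergence, since $Y_{t\wedge\tau_n}^q$ need not be monotone in $n$ (monotone convergence \emph{is} the right tool for $\sup_{t\le T\wedge\tau_n} Y_t^q$ in the final step); and the remark that ``the case $q=1$ follows from taking expectations'' only yields the pointwise bound $\sup_{t\le T}\E[Y_t]<\infty$, which is indeed what you need as the induction base, but the $p=1$ version of \eqref{eq:exp_bound} itself should be obtained from the $q=2$ supremum bound via Jensen, consistent with your earlier reduction. Neither issue affects the validity of the argument.
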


\begin{proof}[Proof of Lemma~\ref{lemboundedmom}]
Inequality~\eqref{eq:exp_bound} follows, e.g., 
from 
Mao~\cite[Corollary~2.4.2]{MR1475218}.
The proof of Lemma~\ref{lemboundedmom}
is thus completed.
\end{proof}

\subsection{Lipschitz continuity in the initial value}

In the next result, Lemma~\ref{expcomp}, 
we recall a well-known 
explicit formula for the first moments of CIR processes and squared Bessel processes
(cf., e.g., Cox, Ingersoll, \& Ross~\cite[Equation~(19)]{MR785475}).

\begin{lem}[An explicit formula for the first moment]
\label{expcomp}
Assume the setting in Section~\ref{setbasic}
and
let $\delta \in (0,\infty)$,
$b, z, t \in [0,\infty)$.
Then
\begin{equation}
\begin{split}
&
  \E\big[
    Z_t^{ z, \delta, b, W }
  \big]
\\ &
  =
    z
    \cdot 
    e^{ - b t } 
    + 
    \delta
    \cdot
    \left( 
    \int_0^t 
      e^{ - b s }  
      \, \mathrm{d}s 
    \right)  
  =
  z \cdot e^{ - b t }
  +
  \delta \cdot
\begin{cases}
  ( 1 - e^{ - b t } ) / b 
  &
  \colon b \neq 0
\\
  t 
  & 
  \colon b = 0
\end{cases}
  .
\end{split}
\end{equation}
\end{lem}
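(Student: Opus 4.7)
The plan is to take expectations in the integral equation \eqref{BesselSDE}, reduce to a linear ODE for the function $m\colon [0,\infty)\to[0,\infty)$ defined by $m(t)=\E[Z_t^{z,\delta,b,W}]$, and solve it explicitly.

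First I would verify that the stochastic integral $\int_0^t 2\sqrt{Z_s^{z,\delta,b,W}}\,\mathrm{d}W_s$ is a true martingale (not merely a local martingale), so that its expectation vanishes. This follows from Lemma~\ref{lemboundedmom}: applying it with $p=1$ gives $\E[\sup_{s\in[0,t]} Z_s^{z,\delta,b,W}]<\infty$, hence the quadratic-variation expectation $\E[\int_0^t 4 Z_s^{z,\delta,b,W}\,\mathrm{d}s]$ is finite by Tonelli, which by the Burkholder–Davis–Gundy (or Itô isometry) criterion is sufficient for the martingale property and for the legitimacy of exchanging expectation with the Lebesgue integral in the drift part (Fubini, using the same $L^1$-bound).

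Taking expectations in \eqref{BesselSDE} then yields
\begin{equation*}
  m(t) = z + \delta\, t - b \int_0^t m(s)\,\mathrm{d}s
\end{equation*}
for every $t\in[0,\infty)$. Since $m$ is continuous on $[0,\infty)$ (again by Lemma~\ref{lemboundedmom} together with the continuity of sample paths and dominated convergence), the right-hand side is continuously differentiable and $m$ satisfies the linear ODE $m'(t)=\delta - b\, m(t)$ with $m(0)=z$.

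Solving this ODE by the integrating-factor method with factor $e^{bt}$ gives $m(t)=z e^{-bt} + \delta \int_0^t e^{-b(t-s)}\,\mathrm{d}s = z e^{-bt} + \delta e^{-bt}\int_0^t e^{bs}\,\mathrm{d}s$, which is exactly the asserted formula; the case distinction on whether $b=0$ or $b\neq 0$ comes only from evaluating the integral $\int_0^t e^{-bs}\,\mathrm{d}s$ explicitly. I do not foresee a genuine obstacle here; the only point requiring a moment of care is justifying that the stochastic integral has expectation zero, but this is immediate from the a priori moment bound in Lemma~\ref{lemboundedmom}.
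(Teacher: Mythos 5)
Your proposal is correct and follows essentially the same route as the paper's proof: take expectations in the integral equation \eqref{BesselSDE}, invoke Lemma~\ref{lemboundedmom} to justify the passage to an ODE for $t\mapsto\E[Z_t^{z,\delta,b,W}]$, and solve by the integrating-factor method. The only difference is cosmetic: you spell out the martingale/zero-expectation step for the stochastic integral more explicitly than the paper does, which is a welcome but inessential elaboration.
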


\begin{proof}[Proof of Lemma~\ref{expcomp}]
Throughout this proof let $ f \colon [0,\infty) \to \R $ 
be the function which satisfies for all $ r \in [0, \infty) $ that
\begin{equation}
  f(r) = 
  \E\big[
    Z_r^{ z, \delta, b, W }
  \big]
  .
\end{equation}
Observe that Lemma~\ref{lemboundedmom}, 
the fact that 
$ 
  Z_r^{ z, \delta, b, W }
$,
$ r \in [0,\infty) $,
is a stochastic process with continuous sample paths,
and Lebesgue's dominated convergence theorem
ensure that $ f $ 
is a continuous function.
This and \eqref{BesselSDE} 
show that for all $ r \in [0,\infty) $
it holds that
\begin{equation}
  f(r) 
  =  
  z
  +
  \int_0^r
  \E\!\left[
    \delta - b \, Z^{ z, \delta, b, V }_s
  \right]
  \mathrm{d}s
  =
  z
  +
  \int_0^r
  \left(
    \delta - b \, f(s)
  \right)
  \mathrm{d}s
  .
\end{equation}
This demonstrates that $ f $ is continuously differentiable 
and that for all $ r \in [0,\infty) $ it holds that
\begin{equation}
  f'(r) = \delta - b \, f(r)
  .
\end{equation}
Hence, we obtain that for all $ r \in [0,\infty) $
it holds that
\begin{equation}
\begin{split}
  f(r)
&
  = 
    e^{ - b r } z 
    + 
    \int_0^r 
      e^{ - b ( r - s ) } 
      \, \delta 
    \, \mathrm{d}s 
  = 
    e^{ - b r } z 
    + 
    \left( 
    \int_0^r 
      e^{ - b s }  
      \, \mathrm{d}s 
    \right)  
    \delta
  .
\end{split}
\end{equation}
This and the fact that 
\begin{multline}
  \forall \, \beta \in (0,\infty) 
  \colon 
    \int_0^t 
      e^{ - \beta s }  
    \, \mathrm{d}s 
=
    \frac{ 1 }{ - \beta }
    \int_0^t 
      - \beta
      \,
      e^{ - \beta s }  
    \, \mathrm{d}s
  =
    \tfrac{ 1 }{ - \beta }
    \left[
      e^{ - \beta s }  
    \right]^{ s = t }_{ s = 0 }
\\
  =
    \frac{ 
      e^{ - \beta t } 
      - 1
    }{
      - \beta
    }
  =
    \frac{ 
      1 - e^{ - \beta t } 
    }{
      \beta
    }
\end{multline}
complete the proof of Lemma~\ref{expcomp}.
\end{proof}

\begin{lem}[$L^1$-Lipschitz continuity]\label{lipsc}
Assume the setting in Section~\ref{setbasic}
and
let $\delta \in (0,\infty)$,
$b,z_1, z_2,t \in [0,\infty)$.
Then
\begin{equation}
\label{l1norm}
  \E\!\left[
    \big|
      Z_t^{ z_1, \delta, b, W }
      -
      Z_t^{ z_2, \delta, b, W }
    \big|
  \right]
  =
  e^{ - b t }
  \cdot 
  \left| z_1 - z_2 \right|
  .
\end{equation}
\end{lem}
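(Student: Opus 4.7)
The plan is to combine the comparison principle (Lemma~\ref{lemcomparison}) with the explicit first-moment formula (Lemma~\ref{expcomp}) to eliminate the absolute value and then compute. Since the right-hand side of~\eqref{l1norm} is symmetric in $z_1$ and $z_2$, I would without loss of generality assume $z_1 \le z_2$ (otherwise swap the roles of $z_1$ and $z_2$).

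Under this assumption, Lemma~\ref{lemcomparison} (applied with $b_1 = b_2 = b$) yields that $\P$-a.s.\ it holds that $Z_t^{z_1,\delta,b,W} \le Z_t^{z_2,\delta,b,W}$. Consequently, the random variable inside the expectation on the left-hand side of~\eqref{l1norm} is $\P$-a.s.\ non-negative, so the absolute value can be dropped:
\begin{equation}
  \E\!\left[
    \big|
      Z_t^{z_1,\delta,b,W} - Z_t^{z_2,\delta,b,W}
    \big|
  \right]
  = \E\!\left[ Z_t^{z_2,\delta,b,W} \right] - \E\!\left[ Z_t^{z_1,\delta,b,W} \right].
\end{equation}
Lemma~\ref{lemboundedmom} ensures that both expectations are finite, so this subtraction is legitimate.

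Next I would apply Lemma~\ref{expcomp} to each of the two terms. Since both invocations produce the same $\delta \cdot \int_0^t e^{-bs}\,\mathrm{d}s$ contribution, those cancel, leaving only
\begin{equation}
  z_2 \cdot e^{-bt} - z_1 \cdot e^{-bt}
  = e^{-bt}(z_2 - z_1)
  = e^{-bt} |z_1 - z_2|,
\end{equation}
which is the desired identity. I do not anticipate any serious obstacle: the work is essentially bookkeeping, and the only point to watch is ensuring the comparison principle is applicable (both processes are driven by the \emph{same} Brownian motion $W$ and share the same $\delta$ and $b$), which is exactly the hypothesis of Lemma~\ref{lemcomparison}.
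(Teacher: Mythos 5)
Your proof is correct and follows essentially the same route as the paper's: reduce to one ordering of $z_1,z_2$ without loss of generality, invoke the comparison principle (Lemma~\ref{lemcomparison}) to drop the absolute value, use the moment bound (Lemma~\ref{lemboundedmom}) to split the expectation, and finish with the explicit first-moment formula (Lemma~\ref{expcomp}). The only cosmetic difference is the direction of the assumed ordering.
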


\begin{proof}[Proof of Lemma~\ref{lipsc}]
Throughout this proof assume w.l.o.g.\ that $ z_1 \geq z_2 $. 
Next note that Lemma~\ref{lemcomparison}, Lemma~\ref{lemboundedmom}, 
and Lemma~\ref{expcomp} show that
\begin{equation}
\begin{split}
&
  \E\!\left[
    \big|
      Z_t^{ z_1, \delta, b, W }
      -
      Z_t^{ z_2, \delta, b, W }
    \big|
  \right]
  =
  \E\!\left[
    Z_t^{ z_1, \delta, b, W }
    -
    Z_t^{ z_2, \delta, b, W }
  \right]
\\ &
=
  \E\!\left[
    Z_t^{ z_1, \delta, b, W }
  \right]
  -
  \E\!\left[
    Z_t^{ z_2, \delta, b, W }
  \right]
=
  z_1 \cdot e^{ - b t }
  -
  z_2 \cdot e^{ - b t }
  =
  e^{ - b t }
  \cdot 
  \left| z_1 - z_2 \right|
  .
\end{split}
\end{equation}
The proof of Lemma~\ref{lipsc} is thus completed.
\end{proof}

\subsection{The scaling property}

\begin{lem}\label{lem101}
Assume the setting in Section~\ref{setbasic}
and
let $\delta,c \in (0,\infty)$,
$b, z \in [0,\infty)$.
Then
\begin{equation}
\label{scaling}
  \P\!\left( 
    \forall \, t \in [0,\infty)
    \colon 
    c \cdot 
    Z^{ 
      \nicefrac{ z }{ c }, 
      \delta, 
      c b, 
      ( 
        c^{ - 1 / 2 } W_{ c s }
      )_{
        s \in [0,\infty)
      }
    }_{
      \nicefrac{ t }{ c } 
    }
    =
    Z_t^{ z, \delta, b, W } 
  \right) = 1.
\end{equation}
\end{lem}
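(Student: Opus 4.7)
The plan is to check that both sides of \eqref{scaling} solve the same SDE driven by the same Brownian motion and then invoke pathwise uniqueness.

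First I would verify that the process $\tilde W \colon [0,\infty)\times\Omega \to \R$ defined by $\tilde W_s := c^{-1/2}\,W_{cs}$ is itself a Brownian motion. This is the standard Brownian scaling property and follows directly from checking continuity of sample paths together with the Gaussian marginals and covariance $\E[\tilde W_s \tilde W_r] = c^{-1}\E[W_{cs}W_{cr}] = \min(s,r)$. Note also that $W$ and $\tilde W$ generate the same filtration (up to $\P$-null sets), so the adaptedness requirement from Section~\ref{setbasic} is preserved.

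Next I would define $Y_t := c\cdot Z^{z/c,\delta,cb,\tilde W}_{t/c}$ for $t\in[0,\infty)$, and aim to show
\begin{equation*}
  Y_t
  =
  z
  +
  \int_0^t\!\bigl(\delta - b\,Y_u\bigr)\,\mathrm{d}u
  +
  \int_0^t 2\sqrt{Y_u}\,\mathrm{d}W_u
  \qquad \P\text{-a.s.}
\end{equation*}
Starting from the defining SDE \eqref{BesselSDE} for $\tilde Z := Z^{z/c,\delta,cb,\tilde W}$ evaluated at time $t/c$ and multiplying through by $c$, the deterministic integral is handled by the substitution $u=cr$, which converts $c\int_0^{t/c}(\delta - cb\,\tilde Z_r)\,\mathrm{d}r$ into $\int_0^t(\delta - b\, Y_u)\,\mathrm{d}u$. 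For the stochastic integral, the key identity is
\begin{equation*}
  \int_0^{t/c} H_r\,\mathrm{d}\tilde W_r
  = c^{-1/2}\int_0^t H_{u/c}\,\mathrm{d}W_u,
\end{equation*}
which I would prove by first checking it for indicator integrands $H_r = \mathbf{1}_{[a,b]}(r)$ (where both sides reduce to $c^{-1/2}(W_{t\wedge cb}-W_{t\wedge ca})$ via $\tilde W_r = c^{-1/2}W_{cr}$), then extending to general adapted integrands by the standard $L^2$-isometry and approximation argument. Applying this with $H_r = 2\sqrt{\tilde Z_r}$ and using $\sqrt{Y_u} = c^{1/2}\sqrt{\tilde Z_{u/c}}$ produces the factor that turns $2c\cdot c^{-1/2}\sqrt{\tilde Z_{u/c}}$ into $2\sqrt{Y_u}$, as required.

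Once $Y$ is established as a continuous, nonnegative, adapted solution of the SDE driven by $W$ with parameters $(z,\delta,b)$, I would conclude by pathwise uniqueness for \eqref{BesselSDE}: since the diffusion coefficient $x\mapsto 2\sqrt{x}$ is $\frac{1}{2}$-Hölder continuous on $[0,\infty)$ and the drift is affine (hence Lipschitz), the Yamada--Watanabe theorem applies and forces $Y_t = Z_t^{z,\delta,b,W}$ for all $t\in[0,\infty)$ simultaneously, $\P$-a.s., using continuity of both sample paths. The main technical point I expect to dwell on is the rigorous justification of the stochastic time-change identity for $\tilde W$; everything else is bookkeeping with substitutions and invocation of uniqueness.
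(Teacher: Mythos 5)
Your proposal is correct and follows essentially the same route as the paper: the paper's proof of Lemma~\ref{lem101} simply points to the scaling property of Brownian motion and the stochastic integral via a citation to Revuz \& Yor, Proposition~XI.1.6, while you have written out exactly the computation that citation hides, namely that $\tilde W_s=c^{-1/2}W_{cs}$ is again a Brownian motion generating the same filtration as $W$, that $Y_t:=c\,Z^{z/c,\delta,cb,\tilde W}_{t/c}$ satisfies the $(z,\delta,b)$-SDE driven by $W$ after the change of variable $u=cs$ in the drift integral and the time-change identity in the stochastic integral, and that pathwise uniqueness (Yamada--Watanabe, with the $\tfrac12$-H\"older diffusion coefficient) then identifies $Y$ with $Z^{z,\delta,b,W}$ up to indistinguishability. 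One point worth keeping in view: the setting in Section~\ref{setbasic} requires $Z^{z,\delta,b,W}$ to be adapted to the augmented filtration of $W$, so you need the observation that $\sigma(\tilde W_r: r\le s)=\sigma(W_u: u\le cs)$ (which you note) to conclude $Y$ is adapted to the filtration of $W$ before pathwise uniqueness can be applied; with that in place the argument is complete.
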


\begin{proof}[Proof of Lemma~\ref{lem101}]
Equation~\eqref{scaling} follows directly from 
the corresponding scaling property of Brownian motion 
and the stochastic integral 
(cf., e.g., Revuz \& Yor~\cite[Proposition~XI.1.6]{MR1725357}).
The proof of Lemma~\ref{lem101} is thus completed.
\end{proof}

\subsection{Hitting times}

\begin{lem}[The Feller boundary condition]
\label{lemhitzero}
Assume the setting in Section~\ref{setbasic}
and
let $\delta \in (0,\infty)$, $b, z \in [0,\infty)$.
Then
\begin{equation}
\label{hitzero}
\begin{split}
&
  \P\!\left(
    \forall \, t \in (0,\infty)
    \colon
    Z_t^{ z, \delta, b, W } > 0 
  \right)
\\ &
  =
  \P\!\left(
    \forall \, t \in (0,\infty)
    \colon
    Z_t^{ z, \delta, b, W } \neq 0 
  \right)
  =
  \begin{cases}
    1 
  & 
    \colon \delta \geq 2
  \\
    0 
  &
    \colon \delta < 2 
  \end{cases}
  .
\end{split}
\end{equation}
\end{lem}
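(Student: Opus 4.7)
The plan is to reduce the statement to the classical Feller boundary criterion for squared Bessel processes. First, since $Z^{z,\delta,b,W}$ is $[0,\infty)$-valued by the assumption of the setting, for every $t \in (0,\infty)$ the events $\{Z^{z,\delta,b,W}_t>0\}$ and $\{Z^{z,\delta,b,W}_t\neq 0\}$ coincide, so the two probabilities in \eqref{hitzero} are automatically equal. It then suffices to evaluate this common value in the two regimes $0<\delta<2$ and $\delta\geq 2$.

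In the case $0<\delta<2$, I would apply the comparison principle of Lemma~\ref{lemcomparison} with $b_1=b$, $b_2=0$, and $z_1=z_2=z$ to obtain $\P$-almost surely that $Z^{z,\delta,b,W}_t \leq Z^{z,\delta,0,W}_t$ for every $t\in[0,\infty)$. The process $Z^{z,\delta,0,W}$ is a squared Bessel process of dimension $\delta$ started at $z$, and the classical Feller boundary analysis (see, e.g., Revuz \& Yor~\cite[Chapter~XI]{MR1725357}) ensures that for $\delta<2$ it reaches $0$ in finite time almost surely. Combined with the fact that $Z^{z,\delta,b,W}\geq 0$, this forces $Z^{z,\delta,b,W}$ to hit $0$ as well, yielding the value $0$.

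In the case $\delta\geq 2$, the upper comparison above is insufficient, and I would instead employ the Lamperti-type time change that converts the CIR equation \eqref{BesselSDE} into the driftless squared Bessel equation. Concretely, let $g(t)=(e^{bt}-1)/b$ for $b>0$ and $g(t)=t$ for $b=0$; a short It\^o's formula computation together with L\'evy's characterization of Brownian motion shows that the process $\tilde Y_s := e^{b\, g^{-1}(s)} Z^{z,\delta,b,W}_{g^{-1}(s)}$ is a squared Bessel process of dimension $\delta$ started at $z$ with respect to a suitable Brownian motion. Since $g$ is a continuous bijection from $(0,\infty)$ onto $(0,\infty)$ and $e^{-bt}>0$, the identity $Z^{z,\delta,b,W}_t = e^{-bt}\tilde Y_{g(t)}$ gives $\{Z^{z,\delta,b,W}_t>0\text{ for all }t\in(0,\infty)\} = \{\tilde Y_s>0\text{ for all }s\in(0,\infty)\}$, and invoking the classical fact that a squared Bessel process of dimension $\delta\geq 2$ remains strictly positive on $(0,\infty)$ almost surely (Revuz \& Yor~\cite[Chapter~XI]{MR1725357}) completes this case.

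The main technical obstacle will be the time change in the second case: some care is needed in identifying the driving Brownian motion for $\tilde Y$ in the given filtered probability space and in handling the edge case $z=0$, where $\tilde Y$ must start from the boundary of a dimension-$\delta\geq 2$ squared Bessel process (which still leaves $0$ instantaneously and never returns, by the same classical result). Once that is in place, the remainder of the argument amounts to a direct quotation of standard hitting results for Bessel processes, in the spirit of the other basic facts collected in Section~\ref{basicsection}.
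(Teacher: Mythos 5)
Your argument is correct, but it follows a genuinely different route from the paper's. The paper's proof is essentially a one-line citation: it invokes G\"oing-Jaeschke \& Yor (and, for the driftless case, Revuz \& Yor) where the Feller boundary classification is stated directly for CIR processes with a mean-reversion term, and then notes, as you do, that the two events coincide because $Z^{z,\delta,b,W}$ is $[0,\infty)$-valued by hypothesis. You instead reduce the drifted equation to the driftless squared Bessel equation: for $\delta<2$ via the comparison Lemma~\ref{lemcomparison} with $b_1=b\ge b_2=0$ (a one-sided bound, which is all that is needed to force a zero), and for $\delta\ge 2$ via the Lamperti-type time change $\tilde Y_s=e^{b\,g^{-1}(s)}Z^{z,\delta,b,W}_{g^{-1}(s)}$, $g(t)=(e^{bt}-1)/b$. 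Your computation of the time change is correct (the drift becomes $\delta\,ds$ and the quadratic variation becomes $\int_0^s 4\tilde Y_r\,dr$, so $\tilde Y$ is a $\mathrm{BESQ}^\delta(z)$), and since $g$ maps $(0,\infty)$ bijectively onto $(0,\infty)$ with $e^{-bt}>0$ the positivity events transfer. What your approach buys is that it only needs the hitting-time classification for driftless squared Bessel processes from the literature, together with tools already established in Section~\ref{basicsection}; what it costs is the extra technical care you rightly flag around the representation of the driving Brownian motion for $\tilde Y$ on $\{\tilde Y=0\}$ and the degenerate start $z=0$. The paper avoids that bookkeeping by citing a reference that already handles the general CIR case, at the price of being less self-contained.
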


\begin{proof}[Proof of Lemma~\ref{lemhitzero}]
First, observe that, e.g.,
G\"oing-Jaeschke \& Yor~\cite[page~315, first paragraph]{MR1997032}
(cf.\ also Revuz \& Yor~\cite[page~442]{MR1725357})
shows that
\begin{equation}
\begin{split}
&
  \P\!\left(
    \forall \, t \in (0,\infty)
    \colon
    Z_t^{ z, \delta, b, W } > 0 
  \right)
  =
  \begin{cases}
    1 
  & 
    \colon \delta \geq 2
  \\
    0 
  &
    \colon \delta < 2 
  \end{cases}
  .
\end{split}
\end{equation}
This and the fact that 
$
  \forall \, t \in [0,\infty), \, \omega \in \Omega  
  \colon
  Z^{ z, \delta, b, W }_t( \omega ) \in [0,\infty)
$
complete the 
proof of Lemma~\ref{lemhitzero}.
\end{proof}

\begin{lem}[Bounds for hitting times]
\label{hitnotzero}
Assume the setting in Section~\ref{setbasic}
and
let $\delta \in (0,2)$,
$b\in [0,\infty)$,
$T\in (0,\infty)$.
Then there exists a real number 
$
  c \in (0,\infty)
$ 
such that 
for every $ \varepsilon \in (0,T] $ 
it holds that
\begin{equation}
\P \! \left(
\inf_{t \in [\varepsilon,T]}
Z_{t}^{0,\delta,b,W}>0
\right)
\leq c\cdot \varepsilon^{1-\delta/2}.
\end{equation}
\end{lem}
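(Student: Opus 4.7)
The plan is to first use the comparison principle (Lemma~\ref{lemcomparison}) with $b_1 = b$, $b_2 = 0$, and $z_1 = z_2 = 0$ to obtain $Z^{0,\delta,b,W}_t \leq Z^{0,\delta,0,W}_t$ $\P$-almost surely for every $t \in [0,\infty)$. This yields the inclusion $\{\inf_{t \in [\varepsilon,T]} Z^{0,\delta,b,W}_t > 0\} \subseteq \{\inf_{t \in [\varepsilon,T]} Z^{0,\delta,0,W}_t > 0\}$, so it suffices to prove the claim in the driftless case $b=0$. Writing $Z := Z^{0,\delta,0,W}$, the strong Markov property at time $\varepsilon$ then gives
\begin{equation*}
  \P\!\left(\inf_{t \in [\varepsilon,T]} Z_t > 0\right)
  =
  \E\!\left[ \mathbf{1}_{\{Z_\varepsilon > 0\}} \, \Psi(Z_\varepsilon, T - \varepsilon) \right],
\end{equation*}
where, for $y, s \in (0,\infty)$, $\Psi(y, s)$ denotes the probability that a driftless squared Bessel process of dimension $\delta$ started at $y$ has not reached zero by time $s$.

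The crux of the argument is the tail estimate
\begin{equation*}
  \Psi(y, s) \leq C \cdot y^{1 - \delta/2} \cdot s^{-(1 - \delta/2)}
  \qquad \text{for all } y, s \in (0, \infty),
\end{equation*}
for some constant $C \in (0,\infty)$ depending only on $\delta$. By the scaling property (Lemma~\ref{lem101}), the first hitting time $T_0^y$ of zero starting from $y > 0$ equals $y \cdot T_0^1$ in distribution, so it is enough to prove $\Psi(1, u) \leq C \, u^{-(1-\delta/2)}$. This follows from the classical explicit density of the first hitting time of zero for squared Bessel processes of dimension $\delta \in (0,2)$ (see, e.g., G\"oing-Jaeschke \& Yor~\cite{MR1997032}), which has the form $f(t) = \frac{(1/2)^{1-\delta/2}}{\Gamma(1-\delta/2)} t^{-2+\delta/2} e^{-1/(2t)}$; integrating from $u$ to $\infty$ and using $e^{-1/(2t)} \leq 1$ yields the desired power law. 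An alternative route uses the local martingale property of $Z^{1-\delta/2}$, verifiable by a direct It\^o computation, combined with standard moment estimates.

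Combining the Markov decomposition with Jensen's inequality applied to the concave function $x \mapsto x^{1-\delta/2}$ (concave since $1-\delta/2 \in (0,1)$) and with Lemma~\ref{expcomp} (which gives $\E[Z_\varepsilon] = \delta \varepsilon$ in the case $b = 0$) then yields
\begin{equation*}
  \E\!\left[ \mathbf{1}_{\{Z_\varepsilon > 0\}} \Psi(Z_\varepsilon, T-\varepsilon)\right] \leq C \, (T-\varepsilon)^{-(1-\delta/2)} \, \E\!\left[ Z_\varepsilon^{1-\delta/2} \right] \leq C \, (\delta \varepsilon)^{1-\delta/2} \, (T-\varepsilon)^{-(1-\delta/2)}.
\end{equation*}
For $\varepsilon \in (0, T/2]$ the factor $(T-\varepsilon)^{-(1-\delta/2)}$ is uniformly bounded, giving a bound of the form $c \cdot \varepsilon^{1-\delta/2}$, while for $\varepsilon \in [T/2, T]$ the trivial bound $\P(\cdot) \leq 1 \leq (2/T)^{1-\delta/2} \cdot \varepsilon^{1-\delta/2}$ suffices. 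The main obstacle is the hitting-time tail estimate in the middle step; once it is in hand the rest of the argument is essentially bookkeeping via Jensen's inequality and the explicit first-moment formula.
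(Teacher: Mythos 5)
Your proof is correct and follows the same overall skeleton as the paper's proof: reduce to the driftless case via the comparison principle, apply the Markov property at time $\varepsilon$, bound the survival probability of the post-$\varepsilon$ process by a power law in the starting point and the time horizon, combine with a moment estimate for $Z_\varepsilon^{0,\delta,0,W}$, and finally handle $\varepsilon\in(T/2,T]$ by the trivial bound. The two places you diverge are minor but worth noting. First, to control $\E\big[\big(Z_\varepsilon^{0,\delta,0,W}\big)^{1-\delta/2}\big]$ you apply Jensen's inequality for the concave map $x\mapsto x^{1-\delta/2}$ together with the first-moment formula of Lemma~\ref{expcomp}; the paper instead computes this fractional moment exactly from the $\chi^2_\delta$ density of $\varepsilon^{-1}Z_\varepsilon^{0,\delta,0,W}$. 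Your route is a bit more elementary and avoids the explicit density, at the cost of a slightly worse constant, which is irrelevant here. Second, for the survival-probability tail bound $\Psi(y,s)\leq C\,y^{1-\delta/2}s^{-(1-\delta/2)}$ you use the scaling property to reduce to $y=1$ and then integrate the classical hitting-time density; the paper directly invokes the Borodin \& Salminen formula for general $y$. These are the same fact (the first hitting time of $0$ from $y$ is $y/(2G)$ with $G\sim\mathrm{Gamma}(1-\delta/2)$), just packaged differently. All the essential steps are present and justified, so your proposal is a valid alternative proof.
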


\begin{proof}[Proof of Lemma~\ref{hitnotzero}]
Throughout this proof let $\nu \in (0,1)$ be the real number given by 
$
  \nu = 1 - \nicefrac{ \delta }{ 2 } 
$,
let 
$
  \Gamma \colon (0,\infty) \to (0,\infty)
$
be the function which satisfies for all 
$ r \in (0,\infty) $ 
that 
\begin{equation}
  \Gamma(r)
  =
  \int_0^{ \infty } x^{ r - 1 }
  \, e^{ - x }
  \, \mathrm{d}x
\end{equation}
(Gamma function),
and let 
$
  P \colon \R \times (0,\infty) \to [0,\infty)
$ 
be the function which satisfies for all 
$ z \in \R $, $ r \in (0,\infty) $ that
\begin{equation}
P(z,r)
=
\begin{cases}
  \P\big(
    \inf_{ t \in [0, r] }
    Z_t^{ z, \delta, 0, W } 
    > 0
  \big)
&
  \colon z \geq 0
\\
  0
  &
  \colon z < 0 
\end{cases}
  .
\end{equation}
There exists a real number $C\in(0,\infty)$ which satisfies for every $z,r \in (0,\infty)$ that
\begin{equation}
  P(z,r)
  = 
  C
  z^\nu
    \int_r^{ \infty } 
      t^{ - \nu - 1 } 
      \, 
      \exp\!\left( \tfrac{ -z }{ 2t } \right) 
    \mathrm{d}t
  ,
\end{equation}
see, e.g., Borodin \& Salminen~\cite[Part~I, Chapter~IV, Section~6, last equation in 46 on page 79]{MR1912205}
(with $\nu=\nu$, $y=\sqrt{z}$ in the notation of
\cite[Part~I, Chapter~IV, Section~6, last equation in 46 on page 79]{MR1912205}).
This and the fact that 
$
  \forall \, t,z \in (0,\infty) \colon
  \exp( \frac{ - z }{ 2 t } ) \leq 1
$
imply that for every $ z, r \in (0,\infty) $ it holds that
\begin{equation}
\label{eqzero2}
  P(z,r)
  \leq
  C 
  z^\nu
  \int_r^\infty 
  t^{ - ( 1 + \nu ) } \,
  \mathrm{d}t
  .
\end{equation}
In the next step we note that 
for every $ \varepsilon \in (0,\infty) $ 
it holds that the random variable 
$
  \varepsilon^{ - 1 } 
  Z_{ \varepsilon }^{ 0, \delta, 0, W} 
$ 
is 
$ \chi^2 $-distributed with $ \delta $ degrees of freedom
(see, e.g., Revuz \& Yor~\cite[Corollary~XI.1.4]{MR1725357}).
Hence, we obtain that for all $ \varepsilon \in (0,\infty) $
it holds that
\begin{equation}
\begin{split}
&
  \E\bigg[
    \Big|
      \tfrac{
        Z_\varepsilon^{ 0, \delta, 0, W } 
      }{
        \varepsilon
      }
    \Big|^\nu
  \bigg]
  =
  \int_0^{ \infty }
  x^{ \nu } 
  \left[
  \frac{
    \left[ \frac{ 1 }{ 2 } \right]^{ \nicefrac{ \delta }{ 2 } }
  }{
    \Gamma( \nicefrac{ \delta }{ 2 } )
  }
    x^{ 
      \nicefrac{ \delta }{ 2 } - 1
    }
    \exp\!\left(
      \tfrac{ - x }{ 2 } 
    \right)
  \right]
  \mathrm{d}x
\\ &
=
  \frac{
    \left[ \frac{ 1 }{ 2 } \right]^{ \nicefrac{ \delta }{ 2 } }
  }{
    \Gamma( \nicefrac{ \delta }{ 2 } )
  }
  \int_0^{ \infty }
    \exp\!\left(
      \tfrac{ - x }{ 2 } 
    \right)
  \mathrm{d}x
  =
  \frac{
    2 
    \left[ \frac{ 1 }{ 2 } \right]^{ \nicefrac{ \delta }{ 2 } }
  }{
    \Gamma( \nicefrac{ \delta }{ 2 } )
  }
  \int_0^{ \infty }
    e^{ - x }
  \, \mathrm{d}x
=
  \frac{
    2^{ 1 - \nicefrac{ \delta }{ 2 } }
  }{
    \Gamma( \nicefrac{ \delta }{ 2 } )
  }
  .
\end{split}
\end{equation}
This and \eqref{eqzero2} imply that 
for all $ r \in (0, \infty) $, $ \varepsilon \in (0,r) $ 
it holds that
\begin{equation}
\begin{split}
&
  \P\!\left(
    \inf_{ t \in [ \varepsilon, r ] }
    Z_t^{ 0, \delta, 0, W } > 0
  \right)
\\
&
  =
  \E\!\left[
    P\!\left(
      Z_{ \varepsilon }^{ 0, \delta, 0, W } ,
      r - \varepsilon
    \right)
  \right]
\leq 
  C \cdot 
  \E\!\left[
    \left|
      Z_{\varepsilon}^{0,\delta,0,W}
    \right|^\nu
  \right]
  \cdot 
  \int_{ r - \varepsilon }^\infty 
    t^{ - (1 + \nu) }
  \, \mathrm{d}t
\\
&
  = C 
  \cdot
  \int_{ r - \varepsilon }^\infty 
    t^{ - (1 + \nu) }
  \, \mathrm{d}t
  \cdot
  \E\bigg[
    \Big|
      \tfrac{
        Z_\varepsilon^{ 0, \delta, 0, W } 
      }{
        \varepsilon
      }
    \Big|^\nu
  \bigg]
  \cdot 
  \varepsilon^{ \nu }
\\
&
  = 
  C 
  \cdot
  \int_{ r - \varepsilon }^{ \infty } 
    t^{ - (1 + \nu) }
  \, \mathrm{d}t
  \cdot
  \left[
    \frac{
      2^{ 1 - \nicefrac{ \delta }{ 2 } }
    }{
      \Gamma( \nicefrac{ \delta }{ 2 } )
    }
  \right]
  \cdot 
  \varepsilon^{ \nu }
=
  C
  \cdot 
  \left[
    \frac{ 
      ( r - \varepsilon )^{ - \nu }
    }{ 
      \nu
    }
  \right]
  \cdot
  \left[
    \frac{
      2^{ 1 - \nicefrac{ \delta }{ 2 } }
    }{
      \Gamma( \nicefrac{ \delta }{ 2 } )
    }
  \right]
  \cdot 
  \varepsilon^{ \nu }
  .
\end{split}
\end{equation}
Therefore, we obtain 
that for all
$ \varepsilon \in (0,T) $ 
it holds that
\begin{equation}
\begin{split}
  \P\!\left(
    \inf_{ t \in [\varepsilon,T] }
    Z_t^{ 0, \delta, 0, W } > 0
  \right)
& 
\leq
  \left[
    \frac{ 
      ( T - \varepsilon )^{ \delta / 2 - 1 }
    }{ 
      ( 1 - \nicefrac{ \delta }{ 2 } )
    }
  \right]
  \cdot
  \left[
    \frac{
      C \cdot 
      2^{ 1 - \delta / 2 }
    }{
      \Gamma( \nicefrac{ \delta }{ 2 } )
    }
  \right]
  \cdot 
  \varepsilon^{ 1 - \delta / 2 }
\\ & =
  \left[
    \frac{ 
      C \cdot 
      2^{ 1 - \delta / 2 } 
      \cdot
      ( T - \varepsilon )^{ \delta / 2 - 1 }
    }{ 
      \Gamma( \nicefrac{ \delta }{ 2 } )
      \cdot
      ( 1 - \nicefrac{ \delta }{ 2 } )
    }
  \right]
  \cdot 
  \varepsilon^{ 1 - \delta / 2 }.
\end{split}
\end{equation}
This and Lemma~\ref{lemcomparison} 
show that
that for all
$ \varepsilon \in ( 0, \nicefrac{ T }{ 2 } ] $ 
it holds that
\begin{equation}
\begin{split}
&
  \P\!\left(
    \inf_{ t \in [\varepsilon,T] }
    Z_t^{ 0, \delta, b, W } > 0
  \right)
\leq
  \P\!\left(
    \inf_{ t \in [\varepsilon,T] }
    Z_t^{ 0, \delta, 0, W } > 0
  \right)
\\ &
\leq
  \left[
    \frac{ 
      C \cdot 
      2^{ 1 - \delta / 2 } 
      \cdot
      ( \nicefrac{ T }{ 2 } )^{ \delta / 2 - 1 }
    }{ 
      \Gamma( \nicefrac{ \delta }{ 2 } )
      \cdot
      ( 1 - \nicefrac{ \delta }{ 2 } )
    }
  \right]
  \cdot 
  \varepsilon^{ 1 - \delta / 2 }
=
  \left[
    \frac{ 
      C \cdot 
      2^{ ( 2 - \delta ) } 
      \cdot
      T^{ \delta / 2 - 1 }
    }{ 
      \Gamma( \nicefrac{ \delta }{ 2 } )
      \cdot
      ( 1 - \nicefrac{ \delta }{ 2 } )
    }
  \right]
  \cdot 
  \varepsilon^{ 1 - \delta / 2 }
  .
\end{split}
\end{equation}
Hence, we obtain that
\begin{equation}
  \sup_{ \varepsilon \in ( 0, T / 2 ] }
  \left[
  \frac{
    \P\big(
      \inf_{ t \in [\varepsilon,T] }
      Z_t^{ 0, \delta, b, W } > 0
    \big)
  }{
    \varepsilon^{ 1 - \delta / 2 }
  }
  \right]
  < \infty
  .
\end{equation}
This assures that
\begin{equation}
\begin{split}
&
  \sup_{ \varepsilon \in ( 0, T ] }
  \left[
  \frac{
    \P\big(
      \inf_{ t \in [\varepsilon,T] }
      Z_t^{ 0, \delta, b, W } > 0
    \big)
  }{
    \varepsilon^{ 1 - \delta / 2 }
  }
  \right]
\\ &
\leq
  \sup_{ \varepsilon \in ( 0, T / 2 ] }
  \left[
  \frac{
    \P\big(
      \inf_{ t \in [\varepsilon,T] }
      Z_t^{ 0, \delta, b, W } > 0
    \big)
  }{
    \varepsilon^{ 1 - \delta / 2 }
  }
  \right]
  +
  \sup_{ \varepsilon \in ( T/2, T ] }
  \left[
  \frac{
    1
  }{
    \varepsilon^{ 1 - \delta / 2 }
  }
  \right]
\\ & 
=
  \sup_{ \varepsilon \in ( 0, T / 2 ] }
  \left[
  \frac{
    \P\big(
      \inf_{ t \in [\varepsilon,T] }
      Z_t^{ 0, \delta, b, W } > 0
    \big)
  }{
    \varepsilon^{ 1 - \delta / 2 }
  }
  \right]
  +
  \left[
  \frac{
    1
  }{
    ( \nicefrac{ T }{ 2 } )^{ 1 - \delta / 2 }
  }
  \right]
  < \infty
  .
\end{split}
\end{equation}
The proof of 
Lemma~\ref{hitnotzero} 
is thus completed.
\end{proof}

\section{Basics of general SDEs}\label{Basicsogeneral}

\subsection{Setting}
\label{settinggensde}

Let $\mathcal{Z}^{(\cdot),(\cdot)}
=
(\mathcal{Z}^{z,v})_{z \in \R,v \in C( [0,\infty), \R )}
\colon
\R\times C([0,\infty),\R)\to C([0,\infty),\R)$
be a Borel-measurable and 
universally adapted function 
(see Kallenberg~\cite[page~423]{MR1876169} for the notion of an universally adapted function),
let $\alpha,\beta\colon \R\to\R$ be continuous functions,
assume that for every complete probability space $(\Omega, \mathfrak{F},\P)$,
every normal filtration $\mathbb{F}=(\mathbb{F}_t)_{t\in [0,\infty)}$
on $(\Omega, \mathfrak{F},\P)$,
every $\mathbb{F}$-Brownian motion $W\colon [0,\infty)\times \Omega\to\R$,
all sample paths continuous $\mathbb{F}$-adapted stochastic processes $Z^{(1)},Z^{(2)}\colon [0,\infty)\times \Omega\to\R$
with
$\forall\, i\in \{1,2\},\, t\in [0,\infty)\colon \P\big(Z_t^{(i)}
=Z_0^{(1)}
+\int_0^t \alpha(Z_s^{(i)})\,\mathrm{d}s
+\int_0^t \beta(Z_s^{(i)})\,\mathrm{d}W_s\big)=1$,
and every $t\in [0,\infty)$ it holds that
\begin{align}
\P\!\left(Z^{(1)}_t=Z^{(2)}_t\right)=1,
\end{align}
assume that for every complete probability space $(\Omega, \mathfrak{F},\P)$,
every normal filtration $\mathbb{F}=(\mathbb{F}_t)_{t\in [0,\infty)}$
on $(\Omega, \mathfrak{F},\P)$,
every $\mathbb{F}$-Brownian motion $W\colon [0,\infty)\times \Omega\to\R$,
every
$ \mathbb{F}_0 $/$ \mathcal{B}( \R )$-measurable
function
$Z\colon \Omega\to\R$,
and every $t\in [0,\infty)$ it holds that
\begin{align}
\P
\Big(
\mathcal{Z}^{Z,W}_t
=Z
+\smallint_0^t \alpha(\mathcal{Z}^{Z,W}_s)\,\mathrm{d}s
+\smallint_0^t \beta(\mathcal{Z}^{Z,W}_s)\,\mathrm{d}W_s
\Big)
=1,
\end{align}
and let $(\Omega,\mathfrak{F},\P)$ be a complete probability space.

\subsection{Brownian motion shifted by a stopping time}

\begin{lem}
\label{triv1}
Assume the setting in Section~\ref{settinggensde},
let $ \mathbb{F} = ( \mathbb{F}_t )_{ t \in [0,\infty) } $ 
be a normal filtration on $ ( \Omega, \mathfrak{F}, \P ) $,
let 
$
  W \colon [0,\infty) \times \Omega \to \R 
$ 
be a $ \mathbb{F} $-Brownian motion,
let $ \tau\colon \Omega\to [0,\infty)$ be a $\mathbb{F}$-stopping time,
let $ {Z}\colon \Omega \to \R $ 
be a $ \mathbb{F}_0 $/$ \mathcal{B}( \R )$-measurable function,
let $ \tilde{W}\colon [0,\infty)\times\Omega\to\R$ be the stochastic process which satisfies for all $t\in [0,\infty)$ that $\tilde{W}_t=W_{t+\tau}-W_\tau$,
and let $\tilde{Z}\colon\Omega\to\R$
be the random variable given by $\tilde{Z}=\mathcal{Z}^{{Z},W}_\tau$.
Then
\begin{enumerate}[(i)]

\item 
it holds that $ \tilde{W} $ is a Brownian motion,

\item
it holds that $ \tilde{W} $ and $ \tilde{Z} $ are independent,
and 

\item
it holds that
\begin{equation}
\label{claim}
  \P\!\left(
    \forall \, t \in [0,\infty)
    \colon
    \mathcal{Z}^{ \tilde{Z}, \tilde{W} }_t
    =
    \mathcal{Z}^{ {Z}, W }_{ t + \tau }
  \right) = 1 .
\end{equation}
\end{enumerate}
\end{lem}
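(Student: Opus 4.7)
The plan is to combine the strong Markov property of Brownian motion with the pathwise uniqueness hypothesis imposed on the SDE in Section~\ref{settinggensde}. I would introduce the time-shifted filtration $\tilde{\mathbb{F}} = (\mathbb{F}_{\tau+t})_{t \in [0,\infty)}$, which inherits normality from $\mathbb{F}$ since $\tau+t$ is an $\mathbb{F}$-stopping time for every $t \in [0,\infty)$. The standard strong Markov property of $W$ then delivers that $\tilde{W}$ is in fact an $\tilde{\mathbb{F}}$-Brownian motion, proving (i), and that $\tilde{W}$ is independent of $\mathbb{F}_\tau$.

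Because $\mathcal{Z}$ is universally adapted and $Z$ is $\mathbb{F}_0$-measurable, the process $(\mathcal{Z}^{Z,W}_t)_{t\in [0,\infty)}$ is $\mathbb{F}$-adapted with continuous sample paths, so the optional sampling/measurability of such a process at the stopping time $\tau$ shows that $\tilde{Z} = \mathcal{Z}^{Z,W}_\tau$ is $\mathbb{F}_\tau$-measurable. Independence of $\tilde{W}$ from $\mathbb{F}_\tau$ then gives (ii).

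For (iii), let $Y_t = \mathcal{Z}^{Z,W}_{t+\tau}$ for $t\in [0,\infty)$. Writing the SDE identity satisfied by $\mathcal{Z}^{Z,W}$ at times $\tau+t$ and $\tau$ and subtracting yields, $\P$-a.s.,
\begin{equation*}
Y_t = \tilde{Z} + \int_\tau^{t+\tau} \alpha(\mathcal{Z}^{Z,W}_s)\,\mathrm{d}s + \int_\tau^{t+\tau} \beta(\mathcal{Z}^{Z,W}_s)\,\mathrm{d}W_s.
\end{equation*}
The pathwise substitution $s = r+\tau$ recasts the drift integral as $\int_0^t \alpha(Y_r)\,\mathrm{d}r$, and the standard time-shift identity for the It\^o integral at a stopping time, applied with $\tilde{W}_r = W_{r+\tau} - W_\tau$, recasts the diffusion integral as $\int_0^t \beta(Y_r)\,\mathrm{d}\tilde{W}_r$. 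Consequently, relative to the filtration $\tilde{\mathbb{F}}$, both $Y$ and $\mathcal{Z}^{\tilde{Z},\tilde{W}}$ are sample-path continuous $\tilde{\mathbb{F}}$-adapted solutions of the SDE driven by $\tilde{W}$ with initial value $\tilde{Z}$. The pathwise uniqueness postulated in Section~\ref{settinggensde} therefore forces $\P(Y_t = \mathcal{Z}^{\tilde{Z},\tilde{W}}_t) = 1$ for each $t \in [0,\infty)$, and continuity of both processes on a countable dense set upgrades this to the uniform-in-$t$ statement \eqref{claim}.

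The principal technical point is the time-shift identity for the stochastic integral at the random time $\tau$; once this (standard) identity is in hand, the remainder is a direct bookkeeping exercise followed by an appeal to uniqueness. Measurability subtleties in (ii) are the only other place where care is required, and they are handled by the universal adaptedness built into the setting of Section~\ref{settinggensde}.
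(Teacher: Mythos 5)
Your proposal follows essentially the same route as the paper's proof: shift the filtration by the stopping time, invoke the strong Markov property of Brownian motion for items (i) and (ii), decompose the integrals at $\tau$ to show $(\mathcal{Z}^{Z,W}_{t+\tau})_{t\ge 0}$ solves the shifted SDE driven by $\tilde{W}$ with initial value $\tilde{Z}$, and conclude item (iii) by the pathwise uniqueness built into Section~\ref{settinggensde} together with sample-path continuity. The proposal is correct and, if anything, slightly more explicit than the paper about where the uniqueness hypothesis and the continuity upgrade enter.
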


\begin{proof}[Proof of Lemma~\ref{triv1}]
Throughout this proof let $\tilde {\mathbb{F}}=(\tilde {\mathbb{F}}_t)_{t\in [0,\infty)}$ be the normal filtration on $(\Omega,\mathfrak{F},\P)$ which satisfies for all $t\in [0,\infty)$ that $\tilde {\mathbb{F}}_t=\mathbb{F}_{t+\tau}$.
Observe that the fact that the function $\mathcal{Z}^{{Z},W}_\tau$ is $ {\mathbb{F}}_\tau $/$ \mathcal{B}( \R ) $-measurable
ensures that
$\tilde{Z}$ is
$ \tilde {\mathbb{F}}_0 $/$ \mathcal{B}( \R ) $-measurable.
In addition, note that,
e.g., Kallenberg~\cite[Theorem~13.11]{MR1876169}
demonstrates that
$\tilde{W}$ is a
$
  \tilde {\mathbb{F}}
$-Brownian motion.
This and the fact that
$\tilde{Z}$ is
$ \tilde {\mathbb{F}}_0 $/$ \mathcal{B}( \R ) $-measurable
show that $\tilde{W}$ and $\tilde{Z}$ are independent.
Next observe that the stochastic process $(\mathcal{Z}^{{Z},W}_{t+\tau})_{t\in [0,\infty)}$
has continuous sample paths,
is $\tilde {\mathbb{F}}$-adapted,
and satisfies that for all $t\in [0,\infty)$ it holds $\P$-a.s.\ that
\begin{align}
\begin{split}
\mathcal{Z}^{{Z},W}_{t+\tau}
&=
{Z}
+\int_0^{t+\tau} \alpha(\mathcal{Z}^{{Z},W}_s)\,\mathrm{d}s
+\int_0^{t+\tau} \beta(\mathcal{Z}^{{Z},W}_s)\,\mathrm{d}W_s
\\
&=
{Z}
+\int_0^{\tau} \alpha(\mathcal{Z}^{{Z},W}_s)\,\mathrm{d}s
+\int_0^{\tau} \beta(\mathcal{Z}^{{Z},W}_s)\,\mathrm{d}W_s
\\
&\quad+
\int_\tau^{t+\tau} \alpha(\mathcal{Z}^{{Z},W}_s)\,\mathrm{d}s
+\int_\tau^{t+\tau} \beta(\mathcal{Z}^{{Z},W}_s)\,\mathrm{d}W_s
\\
&=
\tilde{Z}
+\int_0^{t} \alpha(\mathcal{Z}^{{Z},W}_{s+\tau})\,\mathrm{d}s
+\int_0^{t} \beta(\mathcal{Z}^{{Z},W}_{s+\tau})\,\mathrm{d}\tilde{W}_s.
\end{split}
\end{align}
This establishes \eqref{claim}.
The proof of Lemma~\ref{triv1} is thus completed.
\end{proof}

\begin{lem}
\label{triv2}
Assume the setting in Section~\ref{settinggensde},
let $W,\tilde{W}\colon [0,\infty)\times\Omega\to \R$ be Brownian motions,
let $\tau \colon\Omega\to[0,\infty]$ be a random variable,
assume for all $t\in [0,\infty)$ that
$\P(W_{t\wedge\tau}=\tilde{W}_{t\wedge\tau})=1$,
let $Z\colon \Omega\to\R$ be a random variable,
assume that $W$ and $Z$ are independent,
and assume that $\tilde{W}$ and $Z$ are independent.
Then 
\begin{equation}
\label{claim1}
  \P\!\left(
    \forall \, t \in [0,\infty) 
    \colon
    \big[
      \mathcal{Z}^{ Z , W }_t
      -
      \mathcal{Z}^{ Z , \tilde{W} }_t
    \big] 
    \,
    \mathbbm{1}^{ \Omega }_{
      \{ t \leq \tau \}
    }
    = 0
  \right)
  = 1 .
\end{equation}
\end{lem}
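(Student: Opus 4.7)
The plan is to exploit the universally adapted property of $\mathcal{Z}$, which guarantees that for every $t \in [0,\infty)$ the value $\mathcal{Z}^{z,v}_t$ depends on $v$ only through its restriction $v|_{[0,t]}$. Once this reduction is in place, the lemma follows as soon as one shows that, almost surely, the paths of $W$ and $\tilde{W}$ coincide on the interval $[0,t]$ whenever $t \leq \tau$.

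The first step is to upgrade the marginal hypothesis, which provides $\P(W_{s\wedge\tau} = \tilde{W}_{s\wedge\tau}) = 1$ separately for each $s \in [0,\infty)$, to a single joint pathwise statement. I would intersect over a countable dense subset such as $\Q \cap [0,\infty)$ to obtain a $\P$-null set $N \in \mathfrak{F}$ outside which $W_{s\wedge\tau}(\omega) = \tilde{W}_{s\wedge\tau}(\omega)$ holds for every rational $s \geq 0$ simultaneously. Continuity of the Brownian paths $W$ and $\tilde{W}$, combined with continuity of the map $s \mapsto s \wedge \tau(\omega)$, then extends this equality to all $s \in [0,\infty)$ on $\Omega \setminus N$ by density.

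The second step is essentially bookkeeping. On the event $\{t \leq \tau\} \setminus N$ one has $s \wedge \tau = s$ for every $s \in [0,t]$, so $W_s = \tilde{W}_s$ throughout $[0,t]$. Invoking the universally adapted property gives $\mathcal{Z}^{Z,W}_t = \mathcal{Z}^{Z,\tilde{W}}_t$ on this event, and because the null set $N$ does not depend on $t$, the conclusion holds uniformly in $t \in [0,\infty)$ on the single full-measure set $\Omega \setminus N$.

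The only nontrivial step is the promotion from a.s.\ equality at each fixed time to joint pathwise equality of the stopped paths, but this is standard via continuity plus a countable dense subset. The independence hypotheses between $Z$ and each of $W$ and $\tilde{W}$ do not play a direct role in this argument; their function in the surrounding framework is to ensure that $\mathcal{Z}^{Z,W}$ and $\mathcal{Z}^{Z,\tilde{W}}$ coincide with the strong solutions of the associated SDE (which can be arranged by augmenting filtrations so that $Z$ is $\mathbb{F}_0$-measurable), a fact that is tacit rather than explicitly used here.
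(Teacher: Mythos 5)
Your overall strategy mirrors the paper's: upgrade the pointwise hypothesis $\P(W_{t\wedge\tau}=\tilde W_{t\wedge\tau})=1$ to a single $\P$-full event on which $W$ and $\tilde W$ agree on $[0,t]$ whenever $t\leq\tau$, and then invoke universal adaptedness of $\mathcal{Z}^{(\cdot),(\cdot)}$. There is, however, a gap in your final step.

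You claim that universal adaptedness guarantees, for every $t$, that $\mathcal{Z}^{z,v}_t$ depends on $v$ \emph{pointwise} only through $v|_{[0,t]}$, so that the single null set $N$ built from the Brownian paths is the only exception you need. This reading is too strong. Being adapted with respect to the universal completion of the canonical filtration at time $t$ means only that $(z,v)\mapsto\mathcal{Z}^{z,v}_t$ agrees, up to a null set for the relevant laws, with a $\mathcal{B}(\R)\otimes\sigma(\pi_s\colon s\leq t)$-measurable function of $(z,v|_{[0,t]})$; the universal completion adjoins sets that are not functions of the restricted path. Consequently, for each \emph{fixed} $t$ you only obtain $\mathcal{Z}^{Z,W}_t=\mathcal{Z}^{Z,\tilde W}_t$ almost surely on $\{t\leq\tau\}\setminus N$, introducing a new exceptional set $N_t$ that genuinely depends on $t$. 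Taking the union over uncountably many $t$ is not legitimate. The paper deals with this by first deducing the equality of the $\mathcal{Z}$-processes only for rational $t$ (a countable union of the $N_t$'s) and then extending to all $t$ by noting that the process $t\mapsto\big[\mathcal{Z}^{Z,W}_t-\mathcal{Z}^{Z,\tilde W}_t\big]\mathbbm{1}^\Omega_{\{t\leq\tau\}}$ has left-continuous sample paths (the $\mathcal{Z}$-processes are continuous and $t\mapsto\mathbbm{1}^\Omega_{\{t\leq\tau\}}$ is left-continuous); this additional continuity step is precisely what your argument is missing.
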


\begin{proof}[Proof of Lemma~\ref{triv2}]
Observe that it holds that
\begin{align}
\P\!
\left(
\forall\,t\in[0,\infty)\cap \Q\colon
\big[
t\leq \tau
\implies
W_{t}
=
\tilde{W}_{t}
\big]
\right)
=1.
\end{align}
The fact that
$W$ and $\tilde{W}$
have continuous sample paths hence shows that
\begin{align}\label{prev}
\P\!
\left(
\forall\,t\in[0,\infty)\colon
\big[
t\leq \tau
\implies
W_{t}
=
\tilde{W}_{t}
\big]
\right)
=1.
\end{align}
The assumption that $\mathcal{Z}^{(\cdot),(\cdot)}$ is universally adapted therefore proves that
\begin{align}
  \P\!\left(
    \forall \, t \in [0,\infty) \cap \Q
    \colon
    \big[
      \mathcal{Z}^{ Z , W }_t
      -
      \mathcal{Z}^{ Z , \tilde{W} }_t
    \big] 
    \,
    \mathbbm{1}^{ \Omega }_{
      \{ t \leq \tau \}
    }
    = 0
  \right)
  = 1 .
\end{align}
This and the fact that
the stochastic process
$
    \big[
      \mathcal{Z}^{ Z , W }_t
      -
      \mathcal{Z}^{ Z , \tilde{W} }_t
    \big] 
    \,
    \mathbbm{1}^{ \Omega }_{
      \{ t \leq \tau \}
    }
    \in \R
$,
$
  t \in [0,\infty)
$,
has left-continuous sample paths
establishes \eqref{claim1}.
The proof of Lemma~\ref{triv2} is thus completed.
\end{proof}

\begin{lem}
\label{concatbrownian}
Assume the setting in Section~\ref{settinggensde},
for every $m\in\{0,1\}$ let $\mathbb{F}^{(m)}=(\mathbb{F}^{(m)}_t)_{t\in [0,\infty)}$ be a normal filtration on $(\Omega,\mathfrak{F}, \P)$,
assume that
$(\cup_{t\in[0,\infty)}\mathbb{F}^{(0)}_t)\subseteq \mathbb{F}^{(1)}_0$,
for every $m\in\{0,1\}$ let $W^{(m)}\colon [0,\infty)\times\Omega\to\R$ be a $\mathbb{F}^{(m)}$-Brownian motion,
for every $m\in\{0,1\}$ let $\tau^{(m)}\colon\Omega\to [0,\infty)$ be a $\mathbb{F}^{(m)}$-stopping time,
let $\bar {\mathbb{F}}=(\bar {\mathbb{F}_t})_{t\in [0,\infty)}$ be the normal filtration on $(\Omega,\mathcal A,\P)$ which satisfies for all $t\in [0,\infty)$ that
\begin{align}
\bar {\mathbb{F}_t}
=\Big\{
A\in\mathfrak{F}\colon
A\cap\{t<\tau^{(0)}\}
\in \mathbb{F}^{(0)}_t
\text{ and }
A\cap\{t\geq \tau^{(0)}\}
\in \mathbb{F}^{(1)}_{\max\{t-\tau^{(0)},0\}}
\Big\},
\end{align}
let $\bar\tau\colon\Omega\to[0,\infty)$ be the random variable given by $\bar\tau
=\tau^{(0)}+\tau^{(1)}$,
let $\bar{W}\colon [0,\infty)\times\Omega\to\R$ be the stochastic process which satisfies for all $t\in [0,\infty)$ that
\begin{align}
\bar{W}_t
=
W_t^{(0)}
\mathbbm{1}^{ \Omega }_{
      \{ t \leq \tau^{(0)} \}}
+
(W^{(1)}_{|t-{\tau}^{(0)}|}+W^{(0)}_{\tau^{(0)}})
\mathbbm{1}^{ \Omega }_{
      \{ t > \tau^{(0)} \}},
\end{align}
let ${Z}\colon \Omega\to\R$ be a random variable which is
$ \mathbb{F}^{(0)}_0 $/$ \mathcal{B}( \R ) $-measurable,
and let $\tilde{Z}\colon \Omega\to\R$ be the random variable given by $\tilde{Z}=\mathcal{Z}_{\tau^{(0)}}^{{Z},W^{(0)}}$.
Then
\begin{enumerate}[(i)]
\item\label{item1}
it holds that $\mathbb{F}^{(0)}_0\subseteq \bar{\mathbb{F}_0}$,
\item\label{item2}
it holds that
$(\cup_{t\in[0,\infty)}\bar{\mathbb{F}}_t)\subseteq \sigma_{ \Omega }\big(\cup_{t\in [0,\infty)}\mathbb{F}^{(1)}_t\big)$,
\item\label{item3}
it holds that $\bar\tau$ is a $\bar{\mathbb{F}}$-stopping time,
\item\label{item4}
it holds that $\tau^{(0)}$ is a $\bar{\mathbb{F}}$-stopping time,
\item\label{item5}
it holds that $\bar{W}$ is a $\bar{\mathbb{F}}$-Brownian motion,
\item\label{item6}
it holds that $\bar{W}$ and ${Z}$ are independent,
\item\label{item7}
it holds that $W^{(1)}$ and $\tilde{Z}$ are independent,
and
\item\label{item8}
it holds that
\begin{align}
\P\Big(
\forall\,
t\in [0,\infty)
\colon
\mathcal{Z}^{{Z},\bar{W}}_t
=
\mathcal{Z}^{{Z},W^{(0)}}_t
\mathbbm{1}^{ \Omega }_{
      \{ t \leq \tau^{(0)} \}}
+
\mathcal{Z}^{\tilde{Z},W^{(1)}}_{|t-\tau^{(0)}|}
\mathbbm{1}^{ \Omega }_{
      \{ t > \tau^{(0)} \}}
\Big)
=1.
\end{align}
\end{enumerate}
\end{lem}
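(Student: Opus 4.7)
I would prove the eight items roughly in the order stated, since the later ones rest on (i)--(v). The real work is concentrated in item (v), the assertion that $\bar{W}$ is a $\bar{\mathbb{F}}$-Brownian motion.

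For (i), pick $A\in\mathbb{F}^{(0)}_0$. Then $A\cap\{0<\tau^{(0)}\}\in \mathbb{F}^{(0)}_0$ trivially, while $A\cap\{0\geq\tau^{(0)}\}=A\cap\{\tau^{(0)}=0\}\in\mathbb{F}^{(0)}_0\subseteq\mathbb{F}^{(1)}_0$ by the standing hypothesis $\cup_{t}\mathbb{F}^{(0)}_t\subseteq\mathbb{F}^{(1)}_0$. Item (ii) is immediate from the same inclusion, since for $A\in\bar{\mathbb{F}}_t$ both pieces $A\cap\{t<\tau^{(0)}\}$ and $A\cap\{t\geq\tau^{(0)}\}$ belong to $\sigma_{\Omega}(\cup_s\mathbb{F}^{(1)}_s)$. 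For (iv), on $\{t<\tau^{(0)}\}$ the set $\{\tau^{(0)}\leq t\}$ is empty, and on $\{t\geq\tau^{(0)}\}$ it equals $\{\tau^{(0)}\leq t\}\in\mathbb{F}^{(0)}_t\subseteq\mathbb{F}^{(1)}_0\subseteq\mathbb{F}^{(1)}_{\max\{t-\tau^{(0)},0\}}$. Item (iii) is analogous: $\{\bar\tau\leq t\}\cap\{t<\tau^{(0)}\}=\emptyset$ since $\bar\tau\geq\tau^{(0)}$, and $\{\bar\tau\leq t\}\cap\{t\geq\tau^{(0)}\}=\{\tau^{(1)}\leq t-\tau^{(0)}\}\cap\{t\geq\tau^{(0)}\}$, which on this event lies in $\mathbb{F}^{(1)}_{t-\tau^{(0)}}$ because $\tau^{(1)}$ is an $\mathbb{F}^{(1)}$-stopping time.

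The crucial step is (v). I would use L\'evy's characterization: continuity of sample paths is built into the definition; adaptedness of $\bar W$ to $\bar{\mathbb{F}}$ follows by checking directly that $\{\bar W_t\leq a\}\cap\{t<\tau^{(0)}\}=\{W^{(0)}_t\leq a\}\cap\{t<\tau^{(0)}\}\in\mathbb{F}^{(0)}_t$ and $\{\bar W_t\leq a\}\cap\{t\geq\tau^{(0)}\}\in\mathbb{F}^{(1)}_{t-\tau^{(0)}}$. For the martingale property of $\bar W$ and of $\bar W_t^2-t$ under $\bar{\mathbb{F}}$, I would split every conditional expectation according to whether $\tau^{(0)}\leq s$ or $\tau^{(0)}>s$: on $\{\tau^{(0)}>s\}$ the increments of $\bar W$ up to time $\tau^{(0)}$ coincide with those of $W^{(0)}$ and we may invoke optional stopping, while on $\{\tau^{(0)}\leq s\}$ the increments are supplied by $W^{(1)}$, which by hypothesis is a Brownian motion with respect to a filtration containing $\mathbb{F}^{(0)}_\infty$ at time $0$. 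This is the step I expect to be the main obstacle, since it requires a careful book-keeping of conditional expectations across the two regimes and a verification that the "seam" at $\tau^{(0)}$ causes no loss of the martingale property; alternatively, one can directly check that the finite-dimensional increments of $\bar W$ are independent Gaussians by splitting each interval $[s_{i-1},s_i]$ according to its position relative to $\tau^{(0)}$ and using independence of $W^{(1)}$ from $\mathbb{F}^{(0)}_\infty$.

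Items (vi)--(viii) are then short. For (vi), $Z$ is $\mathbb{F}^{(0)}_0$-measurable, hence $\bar{\mathbb{F}}_0$-measurable by (i), and $\bar W$ is independent of $\bar{\mathbb{F}}_0$ by (v). For (vii), $\tilde Z=\mathcal Z^{Z,W^{(0)}}_{\tau^{(0)}}$ is built as a Borel-measurable function of objects all measurable with respect to $\sigma_\Omega(\cup_t\mathbb{F}^{(0)}_t)\subseteq\mathbb{F}^{(1)}_0$, and $W^{(1)}$ is independent of $\mathbb{F}^{(1)}_0$. Finally, for (viii), I split at $\tau^{(0)}$: on $\{t\leq\tau^{(0)}\}$ the paths of $\bar W$ and $W^{(0)}$ agree on $[0,t]$, so Lemma~\ref{triv2} (applied with $\tau=\tau^{(0)}$, using (vi) for the independence of $\bar W$ and $Z$) yields $\mathcal{Z}^{Z,\bar W}_t=\mathcal{Z}^{Z,W^{(0)}}_t$. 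On $\{t>\tau^{(0)}\}$, I apply Lemma~\ref{triv1} to the $\bar{\mathbb{F}}$-Brownian motion $\bar W$ from (v) and the $\bar{\mathbb{F}}$-stopping time $\tau^{(0)}$ from (iv); the shifted Brownian motion is $\bar W_{s+\tau^{(0)}}-\bar W_{\tau^{(0)}}=W^{(1)}_s$, and Lemma~\ref{triv1} gives $\mathcal{Z}^{\tilde Z,W^{(1)}}_{t-\tau^{(0)}}=\mathcal{Z}^{Z,\bar W}_t$ almost surely on this event, completing the proof.
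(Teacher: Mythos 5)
Your outline matches the paper's proof item by item, with one substantive difference in how item~\eqref{item5} is phrased. For items~\eqref{item1}--\eqref{item4} and \eqref{item6}--\eqref{item8} the paper does exactly what you describe: the event-by-event split across $\{t<\tau^{(0)}\}$ and $\{t\geq\tau^{(0)}\}$, the observation that $Z$ is $\bar{\mathbb{F}}_0$-measurable by \eqref{item1}, that $\tilde Z$ is $\sigma_\Omega(\cup_{u}\mathbb{F}^{(0)}_u)$-measurable and hence $\mathbb{F}^{(1)}_0$-measurable, and for \eqref{item8} the use of Lemma~\ref{triv2} on $[0,\tau^{(0)}]$ followed by Lemma~\ref{triv1} applied to $\bar W$ and $\tau^{(0)}$. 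For \eqref{item5} the paper does not go through L\'evy's characterization; it instead follows the alternative you mention at the end of your sketch, namely computing $\P(\{\bar W_t-\bar W_s\leq a\}\cap A)$ directly for $A\in\bar{\mathbb{F}}_s$ and showing that $\bar W_t-\bar W_s$ is, conditionally, centered Gaussian of variance $t-s$ independent of $\bar{\mathbb{F}}_s$. The split into $\{s\geq\tau^{(0)}\}$ and $\{s<\tau^{(0)}\}$ is common to both routes.

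Where your sketch is thinner than the paper is precisely at the seam, on $\{s<\tau^{(0)}\}$ with $t$ possibly past $\tau^{(0)}$: there $\bar W_t-\bar W_s=W^{(1)}_{t-(t\wedge\tau^{(0)}\vee s)}+W^{(0)}_{t\wedge\tau^{(0)}\vee s}-W^{(0)}_s$ is a sum of a $W^{(0)}$-piece and a $W^{(1)}$-piece, and optional stopping of $W^{(0)}$ alone does not give the required conditional law across the seam. The paper closes this gap by conditioning on $\mathbb{F}^{(0)}_{t\wedge\tau^{(0)}\vee s}$: by the strong Markov property the process $(W^{(0)}_{u+(t\wedge\tau^{(0)}\vee s)}-W^{(0)}_{t\wedge\tau^{(0)}\vee s})_{u\geq 0}$ is a Brownian motion independent of that $\sigma$-algebra, and by the hypothesis $\cup_u\mathbb{F}^{(0)}_u\subseteq\mathbb{F}^{(1)}_0$ so is $W^{(1)}$; combining the two and noting that $t-(t\wedge\tau^{(0)}\vee s)$, $W^{(0)}_{t\wedge\tau^{(0)}\vee s}-W^{(0)}_s$, and $A\cap\{s<\tau^{(0)}\}$ are all $\mathbb{F}^{(0)}_{t\wedge\tau^{(0)}\vee s}$-measurable yields $\P(\{\bar W_t-\bar W_s\leq a\}\cap A\cap\{s<\tau^{(0)}\})=\P(\{W^{(0)}_t-W^{(0)}_s\leq a\}\cap A\cap\{s<\tau^{(0)}\})$, from which the Gaussian conditional law follows. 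That identity is exactly the missing book-keeping you flagged as the main obstacle.
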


\begin{proof}[Proof of Lemma~\ref{concatbrownian}]
Throughout this proof let $F\colon \R\to (0,1)$ be the function which satisfies for all $x\in\R$ that $F(x)=\int_{-\infty}^x\frac{1}{\sqrt{2\pi}}e^{-t^2/2}\,\mathrm dt$ (distribution function of the standard normal distribution)
and for every $t\in [0,\infty)$ let $\rho_t\colon \Omega\to [0,\infty)$ be the random variable given by $\rho_t=\max\{t-\tau^{(0)},0\}$.
Observe that for every $t\in [0,\infty)$ it holds that $\rho_t$ is a $\mathbb{F}^{(1)}$-stopping time.
The fact that $\{0<\tau^{(0)}\}\in \mathbb{F}_0^{(0)}$ ensures that it holds for every $A\in \mathbb{F}_0^{(0)}$ that $A\cap\{0<\tau^{(0)}\}\in \mathbb{F}_0^{(0)}$
and
\begin{align}
A\cap \{0\geq \tau^{(0)}\}\in \mathbb{F}_0^{(0)}\subseteq \mathbb{F}_0^{(1)}=\mathbb{F}_{\max\{0-\tau^{(0)},0\}}^{(1)}.
\end{align}
This proves item~\eqref{item1}.
Next observe that for every
$t\in [0,\infty)$,
$A\in\bar{\mathbb{F}_t}$
it holds that
\begin{align}
A\cap \{t<\tau^{(0)}\}\in \mathbb{F}_t^{(0)}\subseteq \mathbb{F}_0^{(1)}\subseteq \sigma_{ \Omega }(\cup_{s\in [0,\infty)}\mathbb{F}_s^{(1)})
\end{align}
and
\begin{align}
A\cap \{t\geq \tau^{(0)}\}\in \mathbb{F}^{(1)}_{\max\{t-\tau^{(0)},0\}}\subseteq \sigma_{ \Omega }(\cup_{s\in [0,\infty)}\mathbb{F}_s^{(1)}).
\end{align}
Hence, we obtain for every
$t\in [0,\infty)$,
$A\in\bar{\mathbb{F}_t}$
that
\begin{align}
A=(A\cap \{t<\tau^{(0)}\})\cup (A\cap \{t\geq \tau^{(0)}\})\in \sigma_{ \Omega }(\cup_{s\in [0,\infty)}\mathbb{F}_s^{(1)}).
\end{align}
This proves item~\eqref{item2}.
Observe that for every $t\in [0,\infty)$ it holds that
\begin{align}
\{\bar\tau\leq t\}\cap \{t<\tau^{(0)}\}=\emptyset\in \mathbb{F}_t^{(0)}
\end{align}
and
\begin{align}
\begin{split}
&\{\bar\tau\leq t\}\cap \{t\geq \tau^{(0)}\}\\
&\quad=\{\bar\tau\leq t\}\\
&\quad=\{\tau^{(1)}\leq t-\tau^{(0)}\}\\
&\quad=\{\tau^{(1)}\leq \max\{t-\tau^{(0)},0\}\}\cap\{t-\tau^{(0)}\geq 0\}\in \mathbb{F}^{(1)}_{\max\{t-\tau^{(0)},0\}}.
\end{split}
\end{align}
This proves item~\eqref{item3}.
In the next step we note that for every $t\in [0,\infty)$ it holds that
\begin{align}
\{\tau^{(0)}\leq t\}\cap\{t<\tau^{(0)}\}=\emptyset\in \mathbb{F}^{(0)}_t
\end{align}
and
\begin{align}
\{\tau^{(0)}\leq t\}\cap \{t\geq \tau^{(0)}\}=\{\tau^{(0)}\leq t\}\in \mathbb{F}_t^{(0)}\subseteq \mathbb{F}_0^{(1)}\subseteq \mathbb{F}^{(1)}_{\max\{t-\tau^{(0)},0\}}.
\end{align}
This proves item~\eqref{item4}.
The strong Markov property of Brownian motion
(see, e.g.,
Kallenberg~\cite[Theorem~13.11]{MR1876169})
implies that it holds for every
$s\in [0,\infty)$
that $(W_{\rho_s+u}^{(1)}-W_{\rho_s}^{(1)})_{u\in [0,\infty)}$ is a Brownian motion 
independent of $\mathbb{F}^{(1)}_{\rho_s}$.
This and the fact that for every
$s\in [0,\infty)$,
$A\in\bar{\mathbb{F}_s}$
it holds that
$A\cap \{s\geq \tau^{(0)}\}\in \mathbb{F}^{(1)}_{\rho_s}$
demonstrate that for every
$s\in [0,\infty)$,
$t\in (s,\infty)$,
$A\in\bar{\mathbb{F}_s}$,
$a\in\R$
it holds that
\begin{align}\label{need1}
\begin{split}
&\P \!\left(
\{\bar{W}_t-\bar{W}_s\leq a\}
\cap
A
\cap
\{s\geq \tau^{(0)}\}
\right)
\\
&=
\P \!\left(
\{W_{\rho_s+t-s}^{(1)}
-W_{\rho_s}^{(1)}\leq a\}
\cap
A
\cap
\{s\geq \tau^{(0)}\}
\right)
\\
&=
F(a/\sqrt{t-s})\cdot
\P \!\left(
A
\cap
\{s\geq \tau^{(0)}\}
\right).
\end{split}
\end{align}
Observe that for every
$s\in [0,\infty)$,
$t\in (s,\infty)$,
$A\in\bar{\mathbb{F}_s}$
it holds
\begin{enumerate}[(a)]
\item\label{itemextra1} that
$W^{(0)}_{t\wedge\tau^{(0)}\vee s}-W_s^{(0)}$
is
$ \mathbb{F}_{t\wedge\tau^{(0)}\vee s}^{(0)} $/$ \mathcal{B}( \R ) $-measurable,
\item\label{itemextra2} that
$A\cap \{s<\tau^{(0)}\}\in\mathbb{F}_s^{(0)}\subseteq \mathbb{F}_{t\wedge\tau^{(0)}\vee s}^{(0)}$, and
\item\label{itemextra3} that
$t-(t\wedge\tau^{(0)}\vee s)$ is
$ \mathbb{F}_{t\wedge\tau^{(0)}\vee s}^{(0)} $/$ \mathcal{B}( \R ) $-measurable.
\end{enumerate}
Next note that the fact that
for every
$s\in [0,\infty)$,
$t\in (s,\infty)$
it holds that
$\mathbb{F}^{(0)}_{t\wedge\tau^{(0)}\vee s}\subseteq
\sigma_{ \Omega }(\cup_{u\in [0,\infty)}\mathbb{F}^{(0)}_u)\subseteq \mathbb{F}_0^{(1)}$
ensures that
for every
$s\in [0,\infty)$,
$t\in (s,\infty)$
it holds that the Brownian motion $W^{(1)}$ is independent of $\mathbb{F}^{(0)}_{t\wedge\tau^{(0)}\vee s}$.
Moreover, observe that the strong Markov property of Brownian motion (see, e.g.,
Kallenberg~\cite[Theorem~13.11]{MR1876169})
demonstrates that for every
$s\in [0,\infty)$,
$t\in (s,\infty)$
it holds that
$(W^{(0)}_{u+(t\wedge\tau^{(0)}\vee s)}-W^{(0)}_{t\wedge\tau^{(0)}\vee s})_{u\in [0,\infty)}$ is a Brownian motion 
which is independent of $\mathbb{F}^{(0)}_{t\wedge\tau^{(0)}\vee s}$.
Combining items \eqref{itemextra1}-\eqref{itemextra3}
with the fact that
for every
$s\in [0,\infty)$,
$t\in (s,\infty)$
it holds that the Brownian motion $W^{(1)}$ is independent of $\mathbb{F}^{(0)}_{t\wedge\tau^{(0)}\vee s}$
therefore demonstrates that
for every
$s\in [0,\infty)$,
$t\in (s,\infty)$,
$A\in\bar{\mathbb{F}_s}$,
$a\in\R$
it holds that
\begin{align}
\begin{split}
&\P \!\left(
\{\bar{W}_t-\bar{W}_s\leq a\}
\cap
A
\cap
\{s<\tau^{(0)}\}
\right)
\\
&=
\P \!\left(
\{W^{(1)}_{t-(t\wedge\tau^{(0)}\vee s)}
+W^{(0)}_{t\wedge\tau^{(0)}\vee s}
-W^{(0)}_s
\leq a\}
\cap
A
\cap
\{s<\tau^{(0)}\}
\right)
\\
&=
\P \!\left(
\{W^{(0)}_{t}
-W^{(0)}_s
\leq a\}
\cap
A
\cap
\{s<\tau^{(0)}\}
\right).
\end{split}
\end{align}
The fact that
for every
$s\in [0,\infty)$,
$A\in\bar{\mathbb{F}_s}$
it holds that
$A\cap \{s<\tau^{(0)}\}\in \mathbb{F}^{(0)}_{s}$
hence shows that for all
$s\in [0,\infty)$,
$t\in (s,\infty)$,
$A\in\bar{\mathbb{F}_s}$,
$a\in\R$
it holds that
\begin{align}\label{need2}
\begin{split}
&\P \!\left(
\{\bar{W}_t-\bar{W}_s\leq a\}
\cap
A
\cap
\{s<\tau^{(0)}\}
\right)
\\
&=
F(a/\sqrt{t-s})\cdot
\P \!\left(
A
\cap
\{s< \tau^{(0)}\}
\right).
\end{split}
\end{align}
Combining this and \eqref{need1} imply that it holds
for every
$s\in [0,\infty)$,
$t\in (s,\infty)$,
$A\in\bar{\mathbb{F}_s}$,
$a\in\R$
that
\begin{align}
\begin{split}
&\P \!\left(
\{\bar{W}_t-\bar{W}_s \leq a\}
\cap
A
\right)
\\
&=
\P \!\left(
\{\bar{W}_t-\bar{W}_s\leq a\}
\cap
A
\cap
\{s\geq \tau^{(0)}\}
\right)
\\
&\quad
+
\P \!\left(
\{\bar{W}_t-\bar{W}_s\leq a\}
\cap
A
\cap
\{s<\tau^{(0)}\}
\right)
\\
&=
F(a/\sqrt{t-s})
\cdot
\P(A).
\end{split}
\end{align}
This proves item~\eqref{item5}.
Item~\eqref{item1} implies that it holds that ${Z}$ is
$ \bar{\mathbb{F}}_0 $/$ \mathcal{B}( \R ) $-measurable.
This proves item~\eqref{item6}.
Observe that $\tilde{Z}$ is
$ \sigma_{ \Omega }(\cup_{u\in [0,\infty)}\mathbb{F}_u^{(0)}) $/$ \mathcal{B}( \R ) $-measurable.
The fact that $\sigma_{ \Omega }(\cup_{u\in [0,\infty)}\mathbb{F}_u^{(0)})\subseteq \mathbb{F}^{(1)}_0$
hence shows that $W^{(1)}$ and $\tilde{Z}$ are independent.
This proves item~\eqref{item7}.
Lemma~\ref{triv2} implies that
\begin{align}\label{abcprop1}
\P\!
\left(
\forall\,t\in[0,\infty)\colon
t\leq \tau^{(0)}
\implies
\mathcal{Z}^{{Z},\bar{W}}_{t}
=\mathcal{Z}^{{Z},W^{(0)}}_{t}
\right)
=1.
\end{align}
Therefore, we obtain that
\begin{align}\label{need1234}
\P(\tilde{Z}=\mathcal{Z}^{{Z},\bar{W}}_{\tau^{(0)}})
=1.
\end{align}
Next observe that it holds for all $t\in [0,\infty)$ that $\bar{W}_{t+\tau^{(0)}}-\bar{W}_{\tau^{(0)}}=W^{(1)}_t$.
Lemma~\ref{triv1}
and
\eqref{need1234}
hence imply
that
\begin{equation}
\label{abcprop2}
  \P\!\left(
    \forall \, t \in [0,\infty)
    \colon
    \mathcal{Z}^{ {Z}, \bar{W} }_{ t + \tau^{ (0) } }
    =
    \mathcal{Z}^{ \tilde{Z}, W^{ (1) } }_t
  \right)
  = 1 .
\end{equation}
Combining \eqref{abcprop1} and \eqref{abcprop2} establishes item~\eqref{item8}.
The proof of Lemma~\ref{concatbrownian} is thus completed.
\end{proof}

\subsection{A piecewise construction of a Brownian motion}

\begin{lem}
\label{lem1}
Assume the setting in Section~\ref{settinggensde},
for every $m\in\N_0$ let $\mathbb{F}^{(m)}=(\mathbb{F}^{(m)}_t)_{t\in [0,\infty)}$ be a normal filtration on $(\Omega,\mathfrak{F}, \P)$,
assume for all $m\in\N_0$ that $(\cup_{u\in[0,\infty)}\mathbb{F}^{(m)}_u)\subseteq \mathbb{F}^{(m+1)}_0$,
for every $m\in\N_0$ let $W^{(m)}\colon [0,\infty)\times\Omega\to\R$ be a $\mathbb{F}^{(m)}$-Brownian motion,
for every $m\in\N_0$ let $\tau^{(m)}\colon\Omega\to [0,\infty)$ be a $\mathbb{F}^{(m)}$-stopping time,
assume that $\sum_{m=0}^\infty\tau^{(m)}=\infty$,
for every $m\in\N_0$ let $T^{(m)}\colon \Omega\to [0,\infty)$ be the random variable given by $T^{(m)}=\sum_{i=0}^{m-1} \tau^{(i)}$,
let $W\colon [0,\infty)\times\Omega\to\R$ be the stochastic process which satisfies
for all
$m\in\N_0$, $t\in [0,\infty)$
that
$W_{0}=0$
and
\begin{equation}
\big[W_t-W_{T^{(m)}}
-
W^{(m)}_{|t-T^{(m)}|}\big]
\mathbbm{1}^{ \Omega }_{
\{
T^{(m)}
\leq t
\leq T^{(m+1)}
\}
}
=0,
\end{equation}
let $\bar{Z}\colon \Omega \to\R$ be a $ \mathbb{F}_0^{(0)} $/$ \mathcal{B}( \R ) $-measurable function,
let $Z^{(m)}\colon \Omega\to \R$, $m\in\N_0$, be the random variables which satisfy for every $m\in\N_0$ that
$Z^{(m)}$ is
$ \mathbb{F}_0^{(m)} $/$ \mathcal{B}( \R ) $-measurable,
$Z^{(0)}=\bar{Z}$,
and
\begin{equation}
Z^{(m+1)}
=\mathcal{Z}^{Z^{(m)},W^{(m)}}_{\tau^{(m)}},
\end{equation}
and let $\tilde{Z}\colon [0,\infty)\times\Omega\to \R$ be a stochastic process with continuous sample paths which satisfies that
\begin{equation}
\P\!
\left(
\forall\,m\in\N_0,
t\in [0,\infty)\colon
\big[
\tilde{Z}_t
-\mathcal{Z}^{Z^{(m)},W^{(m)}}_{|t-T^{(m)}|}
\big]
\mathbbm{1}^\Omega_{
\{
T^{(m)}
\leq t
\leq T^{(m+1)}
\}
}
=0
\right)
=1.
\end{equation}
Then
\begin{enumerate}[(i)]

\item
it holds that $W$ is a a Brownian motion,

\item
it holds that $W$ and $\bar{Z}$ are independent,
and

\item
it holds that
\begin{align}\label{ghtf}
\P\!
\left(
\forall\, t\in [0,\infty)\colon
\mathcal{Z}^{\bar{Z},W}_t
=\tilde{Z}_t
\right)
=1.
\end{align}
\end{enumerate}
\end{lem}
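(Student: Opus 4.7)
The plan is to iterate Lemma~\ref{concatbrownian} to construct, for each $M \in \N_0$, an amalgamated Brownian motion $\bar W^{(M)}$ (adapted to an enlarged filtration $\bar{\mathbb{F}}^{(M)}$) that coincides with $W$ on the random interval $[0, T^{(M)}]$ almost surely, and then to pass to the limit $M \to \infty$ using the hypothesis $\sum_{m=0}^{\infty}\tau^{(m)} = \infty$. For the base case I set $\bar{\mathbb{F}}^{(0)} = \mathbb{F}^{(0)}$ and $\bar W^{(0)} = W^{(0)}$. Assuming inductively at stage $M$ that $\bar{\mathbb{F}}^{(M)}$ is a normal filtration with $(\cup_u \bar{\mathbb{F}}^{(M)}_u) \subseteq \mathbb{F}^{(M)}_0$, that $\bar W^{(M)}$ is a $\bar{\mathbb{F}}^{(M)}$-Brownian motion, that $T^{(M)}$ is a $\bar{\mathbb{F}}^{(M)}$-stopping time, that $\bar Z$ is $\bar{\mathbb{F}}^{(M)}_0$/$\mathcal{B}(\R)$-measurable and independent of $\bar W^{(M)}$, and that $\bar W^{(M)} = W$ on $[0, T^{(M)}]$ a.s., I would invoke Lemma~\ref{concatbrownian} with the identifications $\mathbb{F}^{(0)} \leftarrow \bar{\mathbb{F}}^{(M)}$, $\mathbb{F}^{(1)} \leftarrow \mathbb{F}^{(M)}$, $W^{(0)} \leftarrow \bar W^{(M)}$, $W^{(1)} \leftarrow W^{(M)}$, $\tau^{(0)} \leftarrow T^{(M)}$, $\tau^{(1)} \leftarrow \tau^{(M)}$, $Z \leftarrow \bar Z$, and take $(\bar{\mathbb{F}}^{(M+1)}, \bar W^{(M+1)})$ to be the outputs $(\bar{\mathbb{F}}, \bar W)$ of that lemma. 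Items (i)--(vi) of Lemma~\ref{concatbrownian} then furnish the inductive hypothesis at stage $M+1$; crucially, item (ii) combined with the standing assumption $(\cup_u \mathbb{F}^{(M)}_u) \subseteq \mathbb{F}^{(M+1)}_0$ of Lemma~\ref{lem1} yields $(\cup_u \bar{\mathbb{F}}^{(M+1)}_u) \subseteq \mathbb{F}^{(M+1)}_0$. The explicit piecewise formula for $\bar W$ in Lemma~\ref{concatbrownian}, together with the inductive hypothesis and the piecewise definition of $W$, then gives $\bar W^{(M+1)} = W$ on $[0, T^{(M+1)}]$ a.s.

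Next I would apply item (viii) of Lemma~\ref{concatbrownian} iteratively to obtain, for every $M \in \N$,
\begin{equation*}
\P\!\left(
\forall\, i \in \{0,\ldots,M-1\},\, t \in [0,\infty)\colon
\big[\mathcal{Z}^{\bar Z, \bar W^{(M)}}_t - \mathcal{Z}^{Z^{(i)}, W^{(i)}}_{|t-T^{(i)}|}\big]\,
\mathbbm{1}^\Omega_{\{T^{(i)} \leq t \leq T^{(i+1)}\}} = 0
\right) = 1.
\end{equation*}
The coincidence $\bar W^{(M)} = W$ on $[0, T^{(M)}]$ together with Lemma~\ref{triv2} applied with $\tau = T^{(M)}$ (using the universal adaptedness of $\mathcal{Z}^{(\cdot),(\cdot)}$ and the fact that $\bar Z$ is independent of both $W$ and $\bar W^{(M)}$) then yields $\mathcal{Z}^{\bar Z, W}_t = \mathcal{Z}^{\bar Z, \bar W^{(M)}}_t$ on $\{t \leq T^{(M)}\}$ a.s., and hence $\mathcal{Z}^{\bar Z, W}_t = \tilde Z_t$ on $\{t \leq T^{(M)}\}$ a.s. by the defining property of $\tilde Z$.

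Finally, the hypothesis $\sum_m \tau^{(m)} = \infty$ forces $T^{(M)} \uparrow \infty$ a.s., so $\cup_M \{t \leq T^{(M)}\}$ has full measure for every fixed $t \in [0,\infty)$; item (iii) then follows by a countable union over $M$. For item (i), fixing times $0 \leq t_1 < \ldots < t_n$ and a Borel set $B \subseteq \R^n$, the identity $\P((W_{t_1},\ldots,W_{t_n}) \in B,\, T^{(M)} > t_n) = \P((\bar W^{(M)}_{t_1},\ldots,\bar W^{(M)}_{t_n}) \in B,\, T^{(M)} > t_n)$, combined with the Brownian distribution of each $\bar W^{(M)}$ and monotone convergence as $M \to \infty$, shows that $W$ has Brownian finite-dimensional distributions, while continuity of the sample paths is immediate from the explicit piecewise formula defining $W$ and the continuity of each $W^{(m)}$. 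Item (ii) follows by the same limiting argument after intersecting the events above with $\{\bar Z \in A\}$ for $A \in \mathcal{B}(\R)$ and invoking the independence of $\bar W^{(M)}$ and $\bar Z$ supplied by item (vi) of Lemma~\ref{concatbrownian} at each stage. The delicate point of the argument is the propagation of the filtration compatibility $(\cup_u \bar{\mathbb{F}}^{(M)}_u) \subseteq \mathbb{F}^{(M)}_0$ through the induction, which is precisely the content of item (ii) of Lemma~\ref{concatbrownian}; once that is secured, the remaining steps amount to routine bookkeeping plus a standard finite-dimensional-distributions limit.
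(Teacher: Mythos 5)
Your approach is the same as the paper's: iterate Lemma~\ref{concatbrownian} to build amalgamated Brownian motions $\bar W^{(M)}$ that agree with $W$ on increasing stochastic intervals, establish the Brownian property of $W$ and its independence from $\bar Z$ by passing to the limit, identify the solution piecewise via item~(viii), and conclude with Lemma~\ref{triv2} and the hypothesis $\sum_m \tau^{(m)} = \infty$.

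There is, however, an off-by-one inconsistency in your inductive set-up which makes it fail as written. You assume at stage $M$ that $(\cup_u \bar{\mathbb{F}}^{(M)}_u) \subseteq \mathbb{F}^{(M)}_0$ and feed $W^{(1)} \leftarrow W^{(M)}$, $\mathbb{F}^{(1)} \leftarrow \mathbb{F}^{(M)}$ into Lemma~\ref{concatbrownian}; but with your declared base case $\bar{\mathbb{F}}^{(0)} = \mathbb{F}^{(0)}$, $\bar W^{(0)} = W^{(0)}$, the hypothesis at $M=0$ reads $(\cup_u \mathbb{F}^{(0)}_u) \subseteq \mathbb{F}^{(0)}_0$, which is false for every nonconstant filtration, and the first inductive step would concatenate $W^{(0)}$ with itself rather than with $W^{(1)}$. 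This is precisely why the paper labels its recursion so that $\bar W^{(m)}$ coincides with $W$ up to time $\bar\tau^{(m)} = T^{(m+1)}$ (not $T^{(m)}$), with the new piece at step $m$ being $W^{(m)}$ and the required filtration condition $(\cup_u \bar{\mathbb{F}}^{(m-1)}_u) \subseteq \mathbb{F}^{(m)}_0$ reducing at $m=1$ to the standing assumption $(\cup_u \mathbb{F}^{(0)}_u) \subseteq \mathbb{F}^{(1)}_0$. Shifting your indices accordingly closes the gap. A smaller point: in your display for the iterated identity you write $\mathcal{Z}^{Z^{(i)}, W^{(i)}}$ rather than the nested initial condition produced by item~(viii); to make this substitution legitimate you must record, as part of the induction, that $\mathcal{Z}^{\bar Z, \bar W^{(m)}}_{\bar\tau^{(m)}} = \tilde Z_{\bar\tau^{(m)}} = Z^{(m+1)}$ a.s., which the paper states explicitly in the induction step proving \eqref{ind}.
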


\begin{proof}[Proof of Lemma~\ref{lem1}]
Throughout this proof let $\mathcal D$ be the set given by
\begin{align}
\begin{split}
\mathcal D
=
\Big\{
&\big(
\mathfrak G^{(0)},\mathfrak G^{(1)},V^{(0)}, V^{(1)}, \upsilon^{(0)},\upsilon^{(1)}
\big)\colon
\\
&
\forall\,
i\in \{1,2\}\colon
\mathfrak G^{(i)}=(\mathfrak G_t^{(i)})_{t\in [0,\infty)}
\text{ is a normal filtration on }
(\Omega,\mathfrak{F},\P),
\\
&
\forall\,
i\in \{1,2\}\colon
V^{(i)}\colon [0,\infty)\times \Omega\to\R
\text{ is a }
\mathfrak G^{(i)}
\text{-Brownian motion},
\\
&
\forall\,
i\in \{1,2\}\colon
\upsilon^{(i)}\colon \Omega\to[0,\infty)
\text{ is a }
\mathfrak G^{(i)}
\text{-stopping time},
\\
&
\text{and }
(\cup_{u\in [0,\infty)}\mathfrak G^{(0)}_u)
\subseteq
\mathfrak G^{(1)}_0
\Big\},
\end{split}
\end{align}
let
\begin{align}
\bar{\mathbb{F}}\colon \mathcal D\to\{\mathfrak G=(\mathfrak G_t)_{t\in [0,\infty)}\text{ is a normal filtration on }(\Omega,\mathfrak{F},\P)\}
\end{align}
be the function which satisfies for all $(
\mathfrak G^{(0)},\mathfrak G^{(1)},V^{(0)}, V^{(1)}, \upsilon^{(0)},\upsilon^{(1)}
)\in\mathcal D$, $t\in [0,\infty)$ that
\begin{multline}
(\bar {\mathbb{F}}(
\mathfrak G^{(0)},\mathfrak G^{(1)},V^{(0)}, V^{(1)}, \upsilon^{(0)},\upsilon^{(1)}))_t
=\Big\{
A\in\mathfrak{F}:\\
\big(
A\cap\{t<\upsilon^{(0)}\}
\in \mathfrak G^{(0)}_t
\big)
\text{ and }
\big(
A\cap\{t\geq \upsilon^{(0)}\}
\in \mathfrak G^{(1)}_{\max\{t-\upsilon^{(0)},0\}}
\big)
\Big\},
\end{multline}
let
\begin{align}
\bar{\tau}
\colon
\mathcal D
\to
\{\upsilon\colon \Omega\to[0,\infty)
\text{ is a random variable}\}
\end{align}
be the function which satisfies for all $(
\mathfrak G^{(0)},\mathfrak G^{(1)},V^{(0)}, V^{(1)}, \upsilon^{(0)},\upsilon^{(1)}
)\in\mathcal D$ that
\begin{equation}
\bar\tau(
\mathfrak G^{(0)},\mathfrak G^{(1)},V^{(0)}, V^{(1)}, \upsilon^{(0)},\upsilon^{(1)}
)
=\upsilon^{(0)}+\upsilon^{(1)},
\end{equation}
let
\begin{align}
\bar{W}
\colon
\mathcal D
\to
\{V\colon [0,\infty)\times\Omega\to\R
\text{ is a stochastic process}\}
\end{align}
be the function which satisfies for all $(
\mathfrak G^{(0)},\mathfrak G^{(1)},V^{(0)}, V^{(1)}, \upsilon^{(0)},\upsilon^{(1)}
)\in\mathcal D$, $t\in [0,\infty)$ that
\begin{equation}
(\bar{W}(
\mathfrak G^{(0)},\mathfrak G^{(1)},V^{(0)}, V^{(1)}, \upsilon^{(0)},\upsilon^{(1)}
))_t
=
\begin{cases}
V^{(0)}_t &\colon t\leq \upsilon^{(0)}\\
V^{(1)}_{t-{\upsilon}^{(0)}}+V^{(0)}_{\upsilon^{(0)}} &\colon t\geq \upsilon^{(0)}
\end{cases},
\end{equation}
and for every $m\in\N_0$ let $\bar{\mathbb{F}}^{(m)}=(\bar{\mathbb{F}}^{(m)})_{t\in [0,\infty)}$ be the normal filtration on $(\Omega,\mathfrak{F},\P)$,
$\bar{W}^{(m)}\colon [0,\infty)\times\Omega\to\R$ be the $\bar{\mathbb{F}}^{(m)}$-Brownian motion,
and $\bar \tau^{(m)}\colon \Omega\to[0,\infty)$
be the $\bar{\mathbb{F}}^{(m)}$-stopping time which satisfy
for all $m\in\N$ that
\begin{align}
(\bar{\mathbb{F}}^{(0)}, \bar{W}^{(0)}, \bar \tau^{(0)})
=({\mathbb{F}}^{(0)},W^{(0)}, \tau^{(0)})
\end{align}
and
\begin{align}
\begin{split}
\bar{\mathbb{F}}^{(m)}
&=\bar{\mathbb{F}}
\!\left(
\bar{\mathbb{F}}^{(m-1)},
{\mathbb{F}}^{(m)},
\bar{W}^{(m-1)},
W^{(m)},
\bar \tau^{(m-1)},
\tau^{(m)}
\right),
\\
\bar{W}^{(m)}
&=
\bar{W}
\!\left(
\bar{\mathbb{F}}^{(m-1)},
{\mathbb{F}}^{(m)},
\bar{W}^{(m-1)},
W^{(m)},
\bar \tau^{(m-1)},
\tau^{(m)}
\right),
\\
\bar \tau^{(m)}
&=
\bar\tau
\!\left(
\bar{\mathbb{F}}^{(m-1)},
{\mathbb{F}}^{(m)},
\bar{W}^{(m-1)},
W^{(m)},
\bar \tau^{(m-1)},
\tau^{(m)}
\right)
\end{split}
\end{align}
(the unique existence of $(\bar{\mathbb{F}}^{(m)}, \bar{W}^{(m)}, \bar \tau^{(m)})$, $m\in\N_0$, follows from Lemma~\ref{concatbrownian}).
Observe that for every $m\in\N_0$, $t\in [0,\infty)$ it holds that $\bar\tau^{(m)}=T^{(m+1)}$
and
\begin{align}\label{propa}
\big[W_t
-
\bar{W}^{(m)}_t\big]
\mathbbm{1}^\Omega_{
\{
t\leq \bar \tau^{(m)}
\}
}
=0.
\end{align}
Hence, we obtain for every $t\in [0,\infty)$ that
\begin{align}\label{conv}
\lim_{m\to\infty}
\bar{W}^{(m)}_t
=W_t.
\end{align}
This shows that $W$ is a Brownian motion.
Lemma~\ref{concatbrownian} implies that for every $m\in\N_0$ it holds that $\bar{Z}$ is
$ \bar{\mathbb{F}}_0^{(m)} $/$ \mathcal{B}( \R ) $-measurable.
Therefore, we obtain for every $m\in\N_0$ that $\bar{Z}$ and $\bar{W}^{(m)}$ are independent.
Combining this with \eqref{conv} implies that $\bar{Z}$ and $W$ are independent.
In the following we show by induction that for all $m\in\N_0$ it holds
\begin{equation}\label{ind}
\P\big(
\forall\, t\in [0,\infty)\colon
\big[
\mathcal{Z}_t^{\bar{Z},\bar{W}^{(m)}}
-
\tilde{Z}_t
\big]
\mathbbm{1}^\Omega_{
\{
t\leq \bar \tau^{(m)}
\}
}
=0
\big)
=1.
\end{equation}
The induction base case $m=0$ is clear.
For the induction step $\N_0\ni m\to m+1\in\N$, assume that \eqref{ind} holds for some $m\in\N_0$.
Note that the induction hypothesis implies
that
\begin{align}
\P\big(
\mathcal{Z}^{\bar{Z},\bar{W}^{(m)}}_{\bar\tau^{(m)}}
=\tilde{Z}_{\bar\tau^{(m)}}
=\mathcal{Z}^{Z^{(m)},W^{(m)}}_{\tau^{(m)}}
=Z^{(m+1)}
\big)
=1.
\end{align}
Lemma~\ref{concatbrownian} hence implies that it holds $\P$-a.s.\ for all $t\in [0,\infty)$ it holds that
\begin{align}
\mathcal{Z}_t^{\bar{Z},\bar{W}^{(m+1)}}
=
\begin{cases}
\mathcal{Z}^{\bar{Z},\bar{W}^{(m)}}_t &\colon t\leq \bar \tau^{(m)}\\
\mathcal{Z}^{Z^{(m+1)},W^{(m+1)}}_{t-\bar\tau^{(m)}} &\colon t\geq \bar\tau^{(m)}
\end{cases}.
\end{align}
This proves \eqref{ind} in the case $m+1$.
Induction thus establishes \eqref{ind}.
Combining
\eqref{propa} and \eqref{ind}
with Lemma~\ref{triv2} demonstrates \eqref{ghtf}.
The proof of Lemma~\ref{lem1} is thus completed.
\end{proof}

\section{Lower error bounds for CIR processes and squared Bessel processes
in the case 
of a special choice of the parameters}
\label{preliminaries}

\subsection{Setting}
\label{setproc}

For every 
$ \delta \in (0,2) $, $ b \in [0,\infty) $
let
$
  \mathcal{Z}^{ ( \cdot ) ,\delta,b, (\cdot ) }
  =
  ( \mathcal{Z}^{ z, \delta, b, v } )_{
    z \in \R,
    v \in C( [0,\infty), \R )
  }
  \colon \R \times C([0,\infty),\R) \to C([0,\infty),\R)
$
be a Borel-measurable and 
universally adapted function 
(see Kallenberg~\cite[page~423]{MR1876169} for the notion of an universally adapted function)
which satisfies that for every complete probability space 
$ 
  ( \Omega, \mathfrak{F}, \P )
$,
every normal filtration 
$ 
  ( \mathbb{F}_t)_{ t \in [0,\infty) } 
$
on 
$
  ( \Omega, \mathfrak{F}, \P )
$,
every 
$ \mathbb{F}_0 $/$ \mathcal{B}( \R ) $-measurable
func\-tion $ Z \colon \Omega \to \R $,
every 
$
	( \mathbb{F}_t )_{ t \in [0,\infty) }
$-Brownian motion 
$ 
  W \colon [0,\infty) \times \Omega\to\R
$,
and every $ t \in [0,\infty) $
it holds $ \P $-a.s.\ that 
\begin{equation}
  \mathcal{Z}_t^{ Z, \delta, b, W }
  = X
  +\int_0^t \left( \delta - b \cdot \mathcal{Z}_s^{ Z, \delta, b, W } \right)\mathrm{d}s
  + \int_0^t 2 \sqrt{ | \mathcal{Z}_s^{ Z, \delta, b, W } | }\,\mathrm{d}W_s,
\end{equation}
let 
$ \delta \in (0,2) $, $ b \in [0,\infty) $,
let 
$ \mathcal{C}_0 $ and $ \mathcal{C}_{ 00 } $ 
be the sets 
given by
$
  \mathcal{C}_0
  =
  \{
    f\in C([0,\infty),\R)\colon f(0)=0
  \}
$
and
$
  \mathcal{C}_{00}
=
  \{
    f \in C([0,1],\R)
  \colon
    f(0)=f(1)=0
  \}
$,
let 
$
  \mathfrak{v}
  \colon 
  \{ \triangle, \Box \}
  \to \N
$
be the function which satisfies 
$
  \mathfrak{v}( \triangle ) = 3
$
and 
$
  \mathfrak{v}( \Box ) = 4
$,
for every 
$ n \in \N $,
$ \ast \in \{ \triangle, \Box \} $ 
let
$ 
  G_n^\ast\colon \mathcal{C}_0\times \mathcal{C}_{00}\to \mathcal{C}_0
$
be the function which satisfies for all
$
  w \in \mathcal{C}_0 
$, 
$ 
  f \in \mathcal{C}_{00} 
$, 
$
  t \in [0,\infty)
$
that
\begin{equation}
\begin{split}
&( G_n^\ast( w, f ) )_t
\\
&=
\begin{cases}
  \big(n \cdot w_{ 1 / n } \cdot t 
  + 
  \frac{ 1 }{ \sqrt{n} } \cdot f_{ n t }\big)\cdot (\mathfrak{v}(\ast)-3)
  +w_t \cdot (4-\mathfrak{v}(\ast))
&
  \colon 0 \leq t \leq \frac{ 1 }{ n }
\\
  w_t 
&
  \colon \frac{ 1 }{ n } \leq t < \infty
\end{cases}
  ,
\end{split}
\end{equation}
for every $ n \in \N $, $ \ast \in \{ \triangle, \Box \} $ let
$
  F^{ \ast }_n
  \colon [0,\infty) \times [\mathcal{C}_0]^3 \times \mathcal{C}_{00} \to \mathcal{C}_0
$
be the function which satisfies for all
$ 
  w^{ (1) } ,
  w^{ \triangle },
  w^{ (2) } \in \mathcal{C}_0
$,
$
  f \in\mathcal{C}_{00}
$, 
$ 
  r, t \in [0,\infty)
$ 
that
\begin{equation}
\begin{split}
&
  ( F_n^{ \ast }( r, w^{(1)}, w^{ \triangle }, w^{(2)},f) )_t
\\
&=
\begin{cases}
  w^{(1)}_t 
&
  \colon 
    t \leq r
\\
  ( G_n^\ast( w^{ \triangle },f) )_{ t - r } 
  +
  w^{(1)}_{ r } 
&
  \colon r \leq t \leq r + \frac{ 1 }{ n }
\\
  w^{(2)}_{ t - ( r + 1 / n ) }
  +
  w^{ \triangle }_{ 1 / n }
  +
  w^{ (1) }_{ r } 
  &
  \colon 
  r + \frac{ 1 }{ n } \leq t
\end{cases}
  ,
\end{split}
\end{equation}
for every $ n \in \N $,
$ k \in \{ 1, 2 \} $ 
let
$
  \mathfrak{T}^k_n \colon [0,\infty)\to [ \nicefrac{ ( k - 1 ) }{ n } , \nicefrac{ k }{ n } )
$
be the function which satisfies for all
$ t \in [0,\infty) $ 
that
\begin{equation}
  \mathfrak{T}^k_n(t)
  =
  \min\!\big( 
    \big\{
      0, \tfrac{ 1 }{ n }, 
      \tfrac{ 2 }{ n }, 
      \tfrac{ 3 }{ n }, 
      \dots 
    \big\}
    \cap 
    [ t, \infty )
  \big) 
  - t 
  +
  \tfrac{ ( k - 1 ) }{ n }
  ,
\end{equation}
for every $ n \in \N $ 
let
$
  \mathcal{S}_n\colon [0,\infty)\times [\mathcal{C}_0]^3\times \mathcal{C}_{00} \to [ \nicefrac{ 1 }{ n } , \infty ]
$,
$
  \mathcal{T}_n
  \colon [0,\infty) \times [\mathcal{C}_0]^3 \times \mathcal{C}_{00} 
  \to 
  [ \nicefrac{ 1 }{ n },\infty)
$,
and 
$
  \Phi_n 
  = 
  ( \Phi_{ n, 1 }, \dots, \Phi_{ n, 6 } )
  \colon
  [0,\infty)\times [\mathcal{C}_0]^3 \times \mathcal{C}_{00}
  \to
  [0,\infty)^2 \times [\mathcal{C}_0]^2 \times [C([0,\infty),\R)]^2
$
be the functions which satisfy
for all 
$ t \in [0,\infty) $, 
$ y \in [\mathcal{C}_0]^3 \times \mathcal{C}_{00} $ 
that
\begin{equation}
  \mathcal{S}_n( t, y )
  =
  \max_{\ast\in \{\triangle,\Box\}}
  \inf\!\left(
    \left\{
      s
      \in [ \mathfrak{T}^2_n(t) , \infty )
      \colon
      \mathcal{Z}^{ 0, \delta, b, F^\ast_n( \mathfrak{T}^1_n( t ) , y ) }_s
      = 0
    \right\}
  \cup 
    \{ \infty \}
  \right) 
  ,
\end{equation}
\begin{equation}
  \mathcal{T}_n(t, y)
  =
\begin{cases}
  \mathcal{S}_n( t, y) 
&
  \colon \mathcal{S}_n(t,y) \neq \infty
\\
  \mathfrak{T}^2_n( t ) 
&
  \colon \mathcal{S}_n( t, y ) = \infty
\end{cases}
  ,
\end{equation}
and
\begin{multline}
\label{eq:defPhi_n}
  \Phi_n( t, y )
  = 
  ( \Phi_{ n, 1 }( t, y ), \dots, \Phi_{ n, 6 }( t, y ) )
  =
  \big(
    t ,
    t + \mathcal{T}_n(t, y) ,
\\
    F^{ \triangle }_n( \mathfrak{T}^1_n(t) , y ),
    F^\Box_n( \mathfrak{T}^1_n(t), y ),
    \mathcal{Z}^{ 
      0, \delta, b, F^{ \triangle }_n( \mathfrak{T}^1_n(t), y ) 
    },
    \mathcal{Z}^{
      0, \delta, b, F^\Box_n( \mathfrak{T}^1_n( t ) , y ) 
    }
  \big)  
  ,
\end{multline}
let $(\Omega,\mathfrak{F},\P)$ be a complete probability space,
let 
$ \tilde{W}, \tilde{W}^{(1)},\tilde{W}^{ \triangle },\tilde{W}^{(2)} \colon \Omega\to\mathcal{C}_{ 0 }$ 
be Brownian motions,
let 
$ 
  B \colon \Omega \to \mathcal{C}_{ 00 }
$ 
be a Brownian bridge,
let 
$ Z \colon \Omega\to [0,\infty) $ 
be a random variable,
let 
$
  Y^{ [n] } 
  \colon \Omega \to 
  [ \mathcal{C}_0 ]^3 \times \mathcal{C}_{00}
$,
$ n \in \N_0 $,
be i.i.d.\ random variables 
with
$
  Y^{ [0] } = 
  ( \tilde{W}^{ (1) } , \tilde{W}^{ \triangle }, \tilde{W}^{ (2) }, B )
$,
let 
$
  X^{ (n), [m] }
  = ( X^{ (n), [m] }_1, \dots, X^{ (n), [m] }_6 )
  \colon 
  \Omega \to [0,\infty)^2 \times [\mathcal{C}_0]^2 \times [C([0,\infty),\R)]^2
$,
$ n \in \N $, 
$ m \in \N_0 $,
be the random variables which satisfy 
for all 
$ n, m \in \N $
that
$
  X^{(n),[m]}
  = \Phi_n(X^{(n),[m-1]}_2,Y^{[m]})
$
and
\begin{equation}
\label{eq:defX}
  X^{ (n), [0] }
  =
  \begin{cases}
    (0,0,\tilde{W},\tilde{W},\mathcal{Z}^{Z,\delta,b,\tilde{W}},\mathcal{Z}^{Z,\delta,b,\tilde{W}}) 
  \\
    \quad
    \colon
    ( \forall \, t \in [0,\infty) \colon \mathcal{Z}_t^{ Z, \delta, b, \tilde{W} } \neq 0 )
  \\
    (
      0,
      \inf\{ 
        t \in [0,\infty) \colon \mathcal{Z}^{ Z, \delta, b, \tilde{W}}_t = 0 
      \} ,
      \tilde{W},
      \tilde{W},
      \mathcal{Z}^{ Z, \delta, b, \tilde{W} },
      \mathcal{Z}^{ Z, \delta, b, \tilde{W} }
    )
  \\
  \quad
  \colon ( \exists \, t \in [0,\infty) \colon \mathcal{Z}_t^{ Z, \delta, b, \tilde{W} } = 0 )
\end{cases},
\end{equation}
for every 
$ n \in \N $,
$ \ast \in \{ \triangle, \Box \} $
let
$
  W^{ (n), \ast }
  \colon \Omega \to \mathcal{C}_{ 0 }
$
be a stochastic process which satisfies
for all
$ m \in \N_0 $,
$ t \in [0,\infty) $
that 
$
  W^{ (n), \ast }_0 
  = 0 
$
and
\begin{equation}
  \Big[
    W^{ (n), \ast }_t - 
    W^{ (n), \ast }_{ X_1^{ (n), [m] } }
    -
    ( X^{ (n), [m] }_{ \mathfrak{v}( \ast ) } )_{ 
      | t - X_1^{ (n), [m] } |
    }
  \Big]
  \mathbbm{1}^{ \Omega }_{
    \{ 
      X_1^{ (n), [m] }
      \leq t
      \leq 
      X_2^{ (n), [m] }
    \}
  }
  = 0
  ,
\end{equation}
for every $ n \in \N $,
$ \ast \in \{ \triangle, \Box \} $
let
$
  Z^{ (n), \ast }
  \colon [0,\infty) \times \Omega \to \R
$
be a stochastic process with continuous sample paths which satisfies
for all
$ m \in \N_0 $
that
\begin{equation}
  \P\Big(
    \forall \,
    t \in [0,\infty) 
    \colon
    \big[
      Z^{ (n), \ast }_t
      -
      ( X^{ (n), [m] }_{ \mathfrak{v}( \ast ) + 2 } )_{ | t - X_1^{ (n), [m] } | }
    \big] 
    \mathbbm{1}^{ \Omega }_{
      \{
        X_1^{ (n), [m] }
        \leq t
        \leq X_2^{ (n), [m] }
      \}
    }
    = 0
  \Big)
  = 1
  ,
\end{equation}
for every $ n \in \N $ let
$ 
  \mathcal{M}_n \colon \Omega \to \N_0
$
and 
$
  \gamma_n \colon \Omega \to [0,1] \cup \{ \infty \} 
$
be the random variables given by
$
  \mathcal{M}_n
  =
  \sup( 
    \{ 0 \} \cup
    \{
      m \in \{ 0, 1, \dots, n+1 \} 
      \colon 
      X_1^{ (n), [m] }
      \leq 1
    \}
  )
$
and
\begin{equation}
  \gamma_n
  =
  \begin{cases}
    X_1^{(n),[\mathcal{M}_n]} 
  &
    \colon \mathcal{M}_n \neq 0
  \\
    \infty 
  &
    \colon \mathcal{M}_n = 0
  \end{cases}
  ,
\end{equation}
and 
assume that 
$
  \tilde{W}
$,
$
  \tilde{W}^{ (1) } 
$, 
$  
  \tilde{W}^{ \triangle }
$, 
$
  \tilde{W}^{(2)}
$, 
$ B $, 
$ Z $, 
$ Y^{ [1] } $,
$ Y^{ [2] } $,
$ \dots $ 
are independent.

\subsection{Properties of the constructed random objects}

\subsubsection{The Feller boundary condition revisited}

\begin{lem}[Hit of the zero boundary]
\label{comptriv}
Assume the setting in Section~\ref{setproc}.
Then
\begin{equation}
\label{c41}
  \P\!\left(
    \exists \, t \in [0,\infty)
    \colon
    \mathcal{Z}_t^{ Z, \delta, b, \tilde{W} }
    = 0
  \right)
  = 1
  .
\end{equation}
\end{lem}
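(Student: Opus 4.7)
The plan is to reduce the statement to the deterministic initial value result in Lemma~\ref{lemhitzero} by conditioning on the independent initial value $Z$. The key ingredients are already assembled: Lemma~\ref{lemhitzero} handles deterministic $z\in[0,\infty)$, and the setting in Section~\ref{setproc} ensures that $Z$ is independent of $\tilde{W}$.

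First I would verify that for every fixed $z\in[0,\infty)$ the process $\mathcal{Z}^{z,\delta,b,\tilde{W}}\colon[0,\infty)\times\Omega\to\R$ coincides $\P$-a.s.\ on $[0,\infty)$ with a process of the form $Z^{z,\delta,b,\tilde{W}}$ from Section~\ref{setbasic}. This uses the pathwise uniqueness of the squared Bessel SDE~\eqref{BesselSDE} (Yamada--Watanabe; this is exactly the uniqueness hypothesis built into Section~\ref{settinggensde} and inherited by Section~\ref{setproc}) together with the fact that $\mathcal{Z}^{z,\delta,b,\tilde{W}}_t\geq 0$ so that $2\sqrt{|\mathcal{Z}^{z,\delta,b,\tilde{W}}_s|}=2\sqrt{\mathcal{Z}^{z,\delta,b,\tilde{W}}_s}$. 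Consequently, Lemma~\ref{lemhitzero}, applied with the assumption $\delta\in(0,2)$, yields
\begin{equation}
\P\!\left(\exists\, t\in(0,\infty)\colon \mathcal{Z}^{z,\delta,b,\tilde{W}}_t=0\right)=1
\qquad\text{for every }z\in[0,\infty).
\end{equation}

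Second, since $Z$ and $\tilde{W}$ are independent and $\mathcal{Z}^{(\cdot),\delta,b,(\cdot)}$ is Borel-measurable, Fubini's theorem gives
\begin{equation}
\P\!\left(\exists\, t\in[0,\infty)\colon \mathcal{Z}^{Z,\delta,b,\tilde{W}}_t=0\right)
=\int_{[0,\infty)}\P\!\left(\exists\, t\in[0,\infty)\colon \mathcal{Z}^{z,\delta,b,\tilde{W}}_t=0\right)\P_Z(\mathrm{d}z)=1,
\end{equation}
which is exactly \eqref{c41}.

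The proof involves no essential obstacle; it is a bookkeeping argument bridging the deterministic-initial-value framework of Section~\ref{setbasic} and the random-initial-value framework of Section~\ref{setproc}. The only point requiring a line of justification is the identification of $\mathcal{Z}^{z,\delta,b,\tilde{W}}$ with $Z^{z,\delta,b,\tilde{W}}$ via pathwise uniqueness, after which the result follows immediately from Lemma~\ref{lemhitzero} and independence of $Z$ and $\tilde{W}$.
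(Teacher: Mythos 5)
Your proposal is correct and follows essentially the same route as the paper's proof: apply Lemma~\ref{lemhitzero} for each fixed $z\in[0,\infty)$, then integrate over the law of $Z$ using the independence of $Z$ and $\tilde W$ and Fubini's theorem. The only difference is that you spell out the identification of $\mathcal{Z}^{z,\delta,b,\tilde W}$ with the non-negative process $Z^{z,\delta,b,\tilde W}$ of Section~\ref{setbasic} (which the paper leaves implicit); note that the logical order is cleaner if stated as: $Z^{z,\delta,b,\tilde W}$ is non-negative and therefore solves the SDE with $|\cdot|$ under the square root, so the pathwise-uniqueness hypothesis built into Section~\ref{settinggensde} forces $\mathcal{Z}^{z,\delta,b,\tilde W}=Z^{z,\delta,b,\tilde W}$ a.s.\ (and hence a posteriori $\mathcal{Z}^{z,\delta,b,\tilde W}\ge 0$ a.s.), rather than presupposing the non-negativity of $\mathcal{Z}$.
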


\begin{proof}[Proof of Lemma~\ref{comptriv}]
Note that the assumption that $ \delta \in (0,2) $ 
and Lemma~\ref{lemhitzero} ensure that for all $ z \in [0,\infty) $
it holds that
\begin{equation}
\label{eq:probab_zero}
  \P\!\left(
    \forall \, t \in [0,\infty)
    \colon
    \mathcal{Z}_t^{ z, \delta, b, \tilde{W} }
    \neq 0
  \right)
  = 0 .
\end{equation}
Next observe that the integral transformation theorem,
the fact that $ Z $ and $ \tilde{W} $ are independent,
and Fubini's theorem ensure that
\begin{equation}
\begin{split}
&
  \P\!\left(
    \forall \, t \in [0,\infty)
    \colon
    \mathcal{Z}_t^{ Z, \delta, b, \tilde{W} }
    \neq 0
  \right)
  =
  \E\!\left[ 
    \mathbbm{1}^{ \Omega }_{
      \{  
        \forall \, t \in [0,\infty)
        \colon
        \mathcal{Z}_t^{ Z, \delta, b, \tilde{W} }
        \neq 0
      \}
    }
  \right]
\\ &
=
  \E\!\left[ 
    \mathbbm{1}^{ C( [0,\infty), \R ) }_{
      \left\{  
        v \in C( [0,\infty) , \R )
        \colon
        ( 
          \forall \, t \in [0,\infty) \colon
          v(t) \neq 0
        )
      \right\}
    }(
      \mathcal{Z}^{ Z, \delta, b, \tilde{W} }
    )
  \right]
\\ &
=
  \int_{ 
    [0,\infty) 
    \times
    C( [0,\infty), \R ) 
  }
    \mathbbm{1}^{ C( [0,\infty), \R ) }_{
      \left\{  
        v \in C( [0,\infty) , \R )
        \colon
        ( 
          \forall \, t \in [0,\infty) \colon
          v(t) \neq 0
        )
      \right\}
    }(
      \mathcal{Z}^{ z, \delta, b, w }
    )
\\ &
\quad
  \big(
    ( Z, \tilde{W} )( \P )_{
      \mathcal{B}( [0,\infty) ) 
      \otimes
      \mathcal{B}( C( [0,\infty), \R ) )
    }
  \big)( dz, dw )
\\ &
=
  \int_{ [0,\infty) }
  \int_{ 
    C( [0,\infty),\R ) 
  }
    \mathbbm{1}^{ C( [0,\infty), \R ) }_{
      \left\{  
        v \in C( [0,\infty) , \R )
        \colon
        ( 
          \forall \, t \in [0,\infty) \colon
          v(t) \neq 0
        )
      \right\}
    }(
      \mathcal{Z}^{ z, \delta, b, w }
    )
\\ &
\quad
  \tilde{W}( \P )_{ 
    \mathcal{B}( C( [0,\infty), \R ) )
  }( dw )
  \,
  Z( \P )_{ 
    \mathcal{B}( [0,\infty) ) 
  }( dz ).
\end{split}
\end{equation}
Combining this and \eqref{eq:probab_zero} assures that
\begin{equation}
\begin{split}
&
  \P\!\left(
    \forall \, t \in [0,\infty)
    \colon
    \mathcal{Z}_t^{ Z, \delta, b, \tilde{W} }
    \neq 0
  \right)
\\ &
=
  \int_0^{ \infty }
  \E\!\left[ 
    \mathbbm{1}^{ C( [0,\infty), \R ) }_{
      \left\{  
        v \in C( [0,\infty) , \R )
        \colon
        ( 
          \forall \, t \in [0,\infty) \colon
          v(t) \neq 0
        )
      \right\}
    }(
      \mathcal{Z}^{ z, \delta, b, \tilde{W} }
    )
  \right]
  Z( \P )_{ \mathcal{B}( [0,\infty) ) }( dz )
\\ &
=
  \int_0^{ \infty }
  \E\!\left[ 
    \mathbbm{1}^{ \Omega }_{
      \left\{  
        \forall \, t \in [0,\infty)
        \colon
        \mathcal{Z}_t^{ z, \delta, b, \tilde{W} }
        \neq 0
      \right\}
    }
  \right]
  Z( \P )_{ \mathcal{B}( [0,\infty) ) }( dz )
\\ &
=
  \int_0^{ \infty }
  \P\!\left( 
        \forall \, t \in [0,\infty)
        \colon
        \mathcal{Z}_t^{ z, \delta, b, \tilde{W} }
        \neq 0
  \right)
  Z( \P )_{ \mathcal{B}( [0,\infty) ) }( dz )
  = 0 .
\end{split}
\end{equation}
Hence, we obtain that
\begin{equation}
\begin{split}
  \P\!\left(
    \exists \, t \in [0,\infty)
    \colon
    \mathcal{Z}_t^{ Z, \delta, b, \tilde{W} }
    = 0
  \right)
& = 
  1 
  -
  \P\!\left(
    \forall \, t \in [0,\infty)
    \colon
    \mathcal{Z}_t^{ Z, \delta, b, \tilde{W} }
    \neq 0
  \right)
  = 1 .
\end{split}
\end{equation}
The proof of Lemma~\ref{comptriv}
is thus completed.
\end{proof}

\subsubsection{One step in the construction of the Brownian motions}

In the next well-known lemma 
we briefly recall the covariance matrix associated to a Brownian bridge.

\begin{lem}[Covariance associated to a Brownian bridge]
\label{lem:covariance_bridge}
Let $ ( \Omega, \mathfrak{F}, \P ) $ be a probability space,
let $ T \in (0,\infty) $,
let $ W \colon [0,T] \times \Omega \to \R $
be a Brownian motion, 
and let $ B \colon [0,T] \times \Omega \to \R $ 
be the function which satisfies for all $ t \in [0,T] $
that
\begin{equation}
  B_t = W_t - \tfrac{ t }{ T } W_T
  .
\end{equation}
Then it holds for all $ s, t \in [0,T] $ that
\begin{equation}
  \E\!\left[ 
    B_s B_t
  \right]
  =
  \min\{ s, t \}
  -
  \tfrac{ s t }{ T }
\end{equation}
\end{lem}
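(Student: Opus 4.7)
The plan is to compute $\E[B_s B_t]$ by straightforward expansion using bilinearity of covariance and the well-known covariance structure of Brownian motion. Since the statement is an elementary identity, I expect no real obstacle; the only care needed is to keep track of which of $s$ and $T$ (respectively $t$ and $T$) is the minimum, but because $s, t \in [0,T]$ this is immediate.

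First I would fix $s, t \in [0,T]$ and expand
\begin{equation}
  B_s B_t
  =
  \big( W_s - \tfrac{s}{T} W_T \big)
  \big( W_t - \tfrac{t}{T} W_T \big)
  =
  W_s W_t
  - \tfrac{t}{T} W_s W_T
  - \tfrac{s}{T} W_t W_T
  + \tfrac{s t}{T^2} W_T^2 .
\end{equation}
Then I would take expectations term by term and use the standard identity $\E[W_u W_v] = \min\{u,v\}$ for a Brownian motion, together with the fact that $s \leq T$ and $t \leq T$, to obtain $\E[W_s W_T] = s$, $\E[W_t W_T] = t$, and $\E[W_T^2] = T$.

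Substituting these values yields
\begin{equation}
  \E[B_s B_t]
  =
  \min\{s,t\}
  - \tfrac{t}{T} \cdot s
  - \tfrac{s}{T} \cdot t
  + \tfrac{s t}{T^2} \cdot T
  =
  \min\{s,t\} - \tfrac{s t}{T},
\end{equation}
which is the claim. The hardest step here is purely bookkeeping: making sure the cross terms $- st/T - st/T + st/T = -st/T$ collapse correctly. No deeper tools beyond the covariance of Brownian motion are required.
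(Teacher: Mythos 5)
Your proof is correct and follows essentially the same approach as the paper's: expand $B_s B_t$ using the definition, take expectations term by term with $\E[W_u W_v] = \min\{u,v\}$, and simplify. No meaningful differences.
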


\begin{proof}[Proof of Lemma~\ref{lem:covariance_bridge}]
Observe that the fact that
\begin{equation}
  \forall \, s, t \in [0,T] \colon
  \quad
  \E\!\left[ W_s W_t \right] = \min\{ s, t \}
\end{equation}
ensures that
for all $ s, t \in [0,T] $
it holds that
\begin{equation}
\begin{split}
  \E\!\left[ 
    B_s B_t
  \right]
&
  =
  \E\!\left[ 
    ( W_s - \tfrac{ s }{ T } W_T ) 
    ( W_t - \tfrac{ t }{ T } W_T )
  \right]
\\ &
=
  \E\!\left[ 
    W_s W_t
  \right]
  -
  \tfrac{ s }{ T } 
  \E\!\left[ 
    W_T 
    W_t
  \right]
  -
  \tfrac{ t }{ T } 
  \E\!\left[ 
    W_s 
    W_T
  \right]
  +
  \tfrac{ s t }{ T^2 } 
  \E\!\left[ 
    ( W_T )^2
  \right]
\\ &
=
  \min\{ s, t \}
  -
  \tfrac{ s t }{ T } 
  -
  \tfrac{ s t }{ T } 
  +
  \tfrac{ s t }{ T } 
=
  \min\{ s, t \}
  -
  \tfrac{ s t }{ T } 
  .
\end{split}
\end{equation}
The proof of Lemma~\ref{lem:covariance_bridge} is thus completed.
\end{proof}

\begin{lem}[Construction of a Brownian motion]
\label{lem:BW_construction}
Let $ ( \Omega, \mathfrak{F}, \P ) $ be a probability space,
let $ T \in (0,1] $,
let $ W \colon [0,1] \times \Omega \to \R $
be a Brownian motion, 
let $ B \colon [0,1] \times \Omega \to \R $
be a Brownian bridge,
assume that $ W $ and $ B $ are independent,
and let $ \mathcal{W} \colon [0,T] \times \Omega \to \R $ 
be the function which satisfies for all $ t \in [0,T] $
that
\begin{equation}
  \mathcal{W}_t = 
  \tfrac{ t }{ T } \cdot W_{ T }
    +
    \sqrt{ T } \cdot B_{ \frac{ t }{ T } }.
\end{equation}
Then it holds that 
$ \mathcal{W} $ is a Brownian motion.
\end{lem}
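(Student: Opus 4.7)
The plan is to verify the three standard characterizing properties of a Brownian motion on $[0,T]$ for $\mathcal{W}$: (i) continuous sample paths starting at zero, (ii) the joint distribution of any finite collection $(\mathcal{W}_{t_1},\dots,\mathcal{W}_{t_n})$ is centered Gaussian, and (iii) the covariance satisfies $\E[\mathcal{W}_s \mathcal{W}_t] = \min\{s,t\}$ for all $s,t \in [0,T]$.

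First, I would observe that $\mathcal{W}_0 = 0$ from the definition and that $\mathcal{W}$ has continuous sample paths since it is a linear combination of compositions of continuous functions with $W$ and $B$, both of which have continuous sample paths. Second, for the Gaussianity, I would use that $W$ and $B$ are independent Gaussian processes, so the joint process $(W,B)$ is Gaussian; since $\mathcal{W}_t$ is, for each $t$, a linear functional of $(W,B)$, any finite-dimensional distribution of $\mathcal{W}$ is the image of a Gaussian vector under a linear map, hence Gaussian. Centeredness is immediate from $\E[W_r] = \E[B_r] = 0$ for all relevant $r$.

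The main computation, and the only one requiring care, is the covariance. For $s,t \in [0,T]$, independence of $W$ and $B$ together with $\E[W_r] = \E[B_r] = 0$ eliminates the two cross terms, leaving
\begin{equation}
  \E[\mathcal{W}_s \mathcal{W}_t]
  =
  \tfrac{st}{T^2}\,\E[W_T^2]
  +
  T \cdot \E\!\left[B_{s/T}\, B_{t/T}\right].
\end{equation}
Using $\E[W_T^2] = T$ and invoking Lemma~\ref{lem:covariance_bridge} (with time horizon $1$ for the bridge $B$) gives
\begin{equation}
  \E\!\left[B_{s/T}\, B_{t/T}\right]
  =
  \min\!\left\{\tfrac{s}{T}, \tfrac{t}{T}\right\} - \tfrac{st}{T^2}
  =
  \tfrac{\min\{s,t\}}{T} - \tfrac{st}{T^2}.
\end{equation}
Plugging this in yields
\begin{equation}
  \E[\mathcal{W}_s \mathcal{W}_t]
  =
  \tfrac{st}{T} + \min\{s,t\} - \tfrac{st}{T}
  =
  \min\{s,t\},
\end{equation}
as required.

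There is no real obstacle here; the only subtlety is just to remember that the Brownian bridge $B$ in the hypothesis has time horizon $1$ (so the covariance formula from Lemma~\ref{lem:covariance_bridge} is applied with $T=1$) while the rescaling by $\sqrt{T}$ converts the bridge on $[0,1]$ into a bridge on $[0,T]$, which is precisely what is needed to cancel the contribution of the deterministic drift term $\frac{t}{T}W_T$ and leave the clean covariance $\min\{s,t\}$. Having verified (i)–(iii), a continuous centered Gaussian process on $[0,T]$ with covariance $\min\{s,t\}$ is by definition a Brownian motion on $[0,T]$, completing the proof.
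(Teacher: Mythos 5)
Your proposal is correct and follows essentially the same route as the paper: the key step in both is the covariance computation via Lemma~\ref{lem:covariance_bridge} together with independence of $W$ and $B$, yielding $\E[\mathcal{W}_s\mathcal{W}_t]=\min\{s,t\}$. You are in fact a bit more complete than the paper, which records only the covariance identity and leaves the Gaussianity and sample-path continuity of $\mathcal{W}$ implicit.
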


\begin{proof}[Proof of Lemma~\ref{lem:BW_construction}]
Note that Lemma~\ref{lem:covariance_bridge} and the assumption that
$ W $ and $ B $ are independent ensure that for all $ s, t \in [0,T] $
it holds that
\begin{equation}
\begin{split}
&
  \E\!\left[ 
    \mathcal{W}_s \mathcal{W}_t
  \right]
=
  \E\!\left[ 
    (
      \tfrac{ s }{ T } \cdot W_{ T }
      +
      \sqrt{ T } \cdot B_{ \frac{ s }{ T } }
    )
    (
      \tfrac{ t }{ T } \cdot W_{ T }
      +
      \sqrt{ T } \cdot B_{ \frac{ t }{ T } }
    )
  \right]
\\
&
  =
  \tfrac{ s t }{ T^2 }
  \cdot
  \E\!\left[  
    ( W_{ T } )^2
  \right]
  +
  \tfrac{ t }{ \sqrt{ T } }
  \cdot
  \E\big[  
    B_{ \frac{ s }{ T } }
    W_{ T } 
  \big]
  +
  \tfrac{ s }{ \sqrt{ T } }
  \cdot
  \E\big[  
    W_{ T } 
    B_{ \frac{ t }{ T } }
  \big]
  +
  T \cdot
  \E\big[ 
    B_{ \frac{ s }{ T } }
    B_{ \frac{ t }{ T } }
  \big]
\\
&
  =
  \tfrac{ s t }{ T }
  +
  \tfrac{ t }{ \sqrt{ T } }
  \cdot
  \E\big[  
    B_{ \frac{ s }{ T } }
  \big]
  \cdot
  \E\big[
    W_{ T } 
  \big]
  +
  \tfrac{ s }{ \sqrt{ T } }
  \cdot
  \E\big[  
    W_{ T } 
  \big]
  \cdot
  \E\big[
    B_{ \frac{ t }{ T } }
  \big]
  +
  T 
  \left(
    \min\{ 
      \tfrac{ s }{ T }
      ,
      \tfrac{ t }{ T }
    \}
    -
    \tfrac{ s t }{ T^2 }
  \right)
\\
&
  =
  \tfrac{ s t }{ T }
  +
    \min\{ 
      s
      ,
      t
    \}
    -
    \tfrac{ s t }{ T }
=
    \min\{ 
      s
      ,
      t
    \}
  .
\end{split}
\end{equation}
The proof of Lemma~\ref{lem:BW_construction} is thus completed.
\end{proof}

\begin{lem}[Construction of Brownian motions]
\label{comptriv1}
Assume the setting in Sec\-tion~\ref{setproc},
let $ n \in \N $,
let $ \tau \colon \Omega \to [0,\infty) $ be a random variable, 
assume that $ Y^{ [0] } $ and $ \tau $ are independent,
and let
$
  \tilde{W}^\Box,
  W^{ \triangle },
  W^\Box
  \colon \Omega \to \mathcal{C}_0
$
be the random variables given by
\begin{equation}
\begin{split}
  \tilde{W}^\Box
  =
  G^\Box_n( \tilde{W}^{ \triangle }, B ) ,
\quad
  W^{ \triangle }
  = F^{ \triangle }_n( \tau, Y^{ [0] } ) ,
\quad
  \text{and}
\quad
  W^\Box
  =
  F^\Box_n( \tau, Y^{ [0] } )
  .
\end{split}
\end{equation}
Then it holds that the stochastic processes
$\tilde{W}^\Box$,
$ W^{ \triangle } $,
and 
$W^\Box$ are Brownian motions.
\end{lem}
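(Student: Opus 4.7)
The plan is to verify the three claims in sequence, using Lemma~\ref{lem:BW_construction} to treat the Brownian bridge segment and the fact that a concatenation of independent Brownian motions at independent splice times is itself a Brownian motion.

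First I would establish that $\tilde{W}^{\Box}$ is a Brownian motion. Using $\mathfrak{v}(\Box)=4$, the definition of $G^{\Box}_n$ yields
\[
\tilde{W}^{\Box}_t =
\begin{cases}
n \, \tilde{W}^{\triangle}_{1/n} \cdot t + \tfrac{1}{\sqrt{n}} B_{nt} & \colon 0 \leq t \leq 1/n, \\
\tilde{W}^{\triangle}_t & \colon t \geq 1/n.
\end{cases}
\]
With $T=1/n$, the restriction to $[0,1/n]$ matches the formula in Lemma~\ref{lem:BW_construction} applied to the Brownian motion $\tilde{W}^{\triangle}$ and the independent bridge $B$, so this piece is a Brownian motion on $[0,1/n]$ ending at $\tilde{W}^{\triangle}_{1/n}$. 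On $[1/n,\infty)$ the increments $\tilde{W}^{\Box}_t - \tilde{W}^{\Box}_{1/n}$ agree with those of $\tilde{W}^{\triangle}$ and, by the independent-increments property combined with the independence of $B$, are independent of the past of $\tilde{W}^{\Box}$ up to time $1/n$. A direct check of the finite-dimensional Gaussian covariances then shows that $\tilde{W}^{\Box}$ is a Brownian motion on $[0,\infty)$.

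Next I would handle $W^{\triangle}$. Since $\mathfrak{v}(\triangle)=3$, the function $G^{\triangle}_n(w,f)$ reduces to $w$, so $W^{\triangle}$ is simply the concatenation of the three independent Brownian motions $\tilde{W}^{(1)}$, $\tilde{W}^{\triangle}$, $\tilde{W}^{(2)}$ at the splice points $\tau$ and $\tau + 1/n$. Conditioning on $\tau = r$, for every fixed $r \in [0,\infty)$ the patched process has the finite-dimensional Gaussian law of a Brownian motion. Since $\tau$ is independent of $Y^{[0]}$ by hypothesis, integrating over the law of $\tau$ preserves the Brownian law, so $W^{\triangle}$ is a Brownian motion.

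Finally, $W^{\Box}$ differs from $W^{\triangle}$ only on the middle segment $[\tau, \tau + 1/n]$, where it equals $\tilde{W}^{\Box}_{t-\tau} + \tilde{W}^{(1)}_{\tau}$; hence $W^{\Box}$ is the analogous three-fold concatenation of $\tilde{W}^{(1)}$, $\tilde{W}^{\Box}$, $\tilde{W}^{(2)}$. The first step shows $\tilde{W}^{\Box}$ is a Brownian motion, and since $\tilde{W}^{\Box}$ is a Borel-measurable function of $(\tilde{W}^{\triangle}, B)$ it is independent of $(\tilde{W}^{(1)}, \tilde{W}^{(2)}, \tau)$ by the blanket independence assumption. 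The same conditioning-on-$\tau$ argument as above then applies. The only technically delicate point is the careful tracking of independence—above all in this third step, where one must invoke that $\tilde{W}^{\Box}$, though constructed from the random inputs $(\tilde{W}^{\triangle}, B)$, remains independent of the other ingredients of the concatenation.
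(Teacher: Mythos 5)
Your proposal follows essentially the same route as the paper: reduce to the case of a constant $\tau$ by conditioning, apply Lemma~\ref{lem:BW_construction} to the bridge segment to obtain the Brownian law, and then lift to general $\tau$ via the independence of $Y^{[0]}$ and $\tau$. The paper's proof is a two-sentence version of exactly this argument; your version just spells out the piecewise structure of $G^{\ast}_n$ and $F^{\ast}_n$ and the independence bookkeeping explicitly.
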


\begin{proof}[Proof of Lemma~\ref{comptriv1}]
In the case of a constant random variable $ \tau $ the claim 
follows from 
Lemma~\ref{lem:BW_construction}.
The case of a general $ \tau $ follows from the corresponding claim 
with a constant $ \tau $ by using the 
independence of $ Y^{[0]} $ and $ \tau $.
The proof of Lemma~\ref{comptriv1} is thus completed.
\end{proof}

\begin{lem}[One step in the construction of the Brownian motions]
\label{triv10}
Assume the setting in Section~\ref{setproc},
let $ n \in \N $,
let $ \tau \colon \Omega \to [0,\infty) $ be a random variable,
assume that $ Y^{ [0] } $ and $ \tau $ are independent,
let 
$ \rho \colon \Omega \to [ \nicefrac{ 1 }{ n }, \nicefrac{ 2 }{ n } ) $ 
be the random variable given by 
$
  \rho = \mathfrak{T}^2_n( \tau )
$,
and 
for every 
$ \ast \in \{ \triangle, \Box \} $ 
let 
$
  W^{ \ast } \colon \Omega \to \mathcal{C}_0
$
and 
$
  Z^{ \ast } \colon \Omega \to C( [0,\infty), \R ) 
$
be the random variables given by
\begin{equation}
  W^{ \ast } = F^{ \ast }_n( \mathfrak{T}^1_n( \tau ), Y^{ [0] } ) 
\qquad 
\text{and}
\qquad
  Z^{ \ast } = \mathcal{Z}^{ 0, \delta, b, W^{ \ast } } 
  .
\end{equation}
Then
\begin{enumerate}[(i)]

\item
\label{triv10item01}
it holds that
$  
  \tilde{W}^{ (2) }
$
and
$
  ( Z_{ \rho }^{ \triangle }, Z_{ \rho }^{ \Box } )
$ 
are independent,

\item
\label{triv10item01a}
it holds for every $ \ast \in \{ \triangle, \Box \} $ that
\begin{equation}\label{c2}
  \P\!\left(
    \forall \, t \in [0,\infty)
    \colon
    Z^{ \ast }_{ t + \rho }
    =
    \mathcal{Z}_t^{ Z_{ \rho }^{ \ast }, \delta, b, \tilde{W}^{ (2) } }
  \right)
  = 1 ,
\end{equation}
\item
\label{triv10item02}
it holds that
\begin{equation}\label{c3}
  \P\!\left(
    \big[
      Z^{ \triangle }_{ \rho } \geq Z_{ \rho }^\Box
    \big]
    \Longleftrightarrow
    \big[
      \forall \, t \in [0,\infty)
      \colon
      Z_{ t + \rho }^{ \triangle }
      \geq 
      Z^\Box_{ t + \rho }
    \big]
  \right)
  = 1,
\end{equation}
\item\label{triv10item02a}
it holds that
\begin{equation}\label{c4}
  \P\!\left(
    \big[
      Z^\Box_{ \rho } 
      \geq 
      Z_{ \rho }^{ \triangle }
    \big]
    \Longleftrightarrow
    \big[
      \forall \, t \in [0,\infty)
      \colon
      Z_{ t + \rho }^\Box
      \geq 
      Z^{ \triangle }_{ t + \rho }
    \big]
  \right)
  = 1,
\end{equation}
and 
\item
\label{triv10item03}
it holds that
\begin{multline}
\label{c5}
  \P\Big(
    \mathcal{S}_n( \tau, Y^{ [0] } )
    =
    \mathcal{T}_n( \tau, Y^{ [0] } )
    =
\\
    \inf\!\big( 
    \{ \infty \} \cup
    \big\{
      t \in [0,\infty)
      \colon 
      t \geq \rho
      \text{ and }
      \max\nolimits_{ \ast \in \{ \triangle, \Box \} }
        Z^{ \ast }_t
      = 0
    \big\}
    \big)
  \Big)
  = 1 .
\end{multline}
\end{enumerate}
\end{lem}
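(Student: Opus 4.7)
The unifying observation is that, by the explicit piecewise definition of $F^\ast_n$, the two perturbed paths $W^\triangle$ and $W^\Box$ coincide with $\tilde{W}^{(1)}$ on $[0,\mathfrak{T}^1_n(\tau)]$, differ only on the interval $(\mathfrak{T}^1_n(\tau),\rho)$, and agree again on $[\rho,\infty)$. A direct computation from the definitions of $F^\ast_n$ and $G^\ast_n$ (using that $B\in\mathcal{C}_{00}$ so $B_1=0$) yields, for both $\ast\in\{\triangle,\Box\}$ and all $t\in[0,\infty)$, that $W^\ast_{t+\rho}-W^\ast_\rho = \tilde{W}^{(2)}_t$ and that $(W^\ast_s)_{s\in[0,\rho]}$ is a Borel function of $(\tilde{W}^{(1)},\tilde{W}^{\triangle},B,\tau)$.

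For item~\eqref{triv10item01}, the universal adaptedness of $\mathcal{Z}$ ensures that $Z^\ast_\rho=\mathcal{Z}^{0,\delta,b,W^\ast}_\rho$ is $\sigma(\tilde{W}^{(1)},\tilde{W}^{\triangle},B,\tau)$-measurable, and this $\sigma$-algebra is independent of $\tilde{W}^{(2)}$ by hypothesis. For item~\eqref{triv10item01a}, the plan is to equip the space with the completed right-continuous filtration generated by $W^\ast$ together with $\tau$; under this filtration, Lemma~\ref{comptriv1} and the independence of $\tau$ and $Y^{[0]}$ make $W^\ast$ a Brownian motion, and $\rho=\mathfrak{T}^1_n(\tau)+\nicefrac{1}{n}$ becomes a stopping time because $\tau$ is measurable at time $0$. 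Applying Lemma~\ref{triv1} with ${Z}=0$, and identifying the shifted Brownian motion with $\tilde{W}^{(2)}$, then yields \eqref{c2}.

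For items~\eqref{triv10item02}--\eqref{triv10item02a}, I would combine the previous two items: equation~\eqref{c2} exhibits $Z^\triangle_{\cdot+\rho}$ and $Z^\Box_{\cdot+\rho}$ as two solutions of the squared Bessel SDE driven by the common Brownian motion $\tilde{W}^{(2)}$ with the $\tilde{W}^{(2)}$-independent initial values $Z^\triangle_\rho$ and $Z^\Box_\rho$. Conditioning on $(Z^\triangle_\rho,Z^\Box_\rho)$ reduces the situation to the deterministic-initial-value setting, and the comparison principle of Lemma~\ref{lemcomparison} supplies the forward implications in \eqref{c3} and \eqref{c4}; the reverse implications are immediate from evaluation at $t=0$.

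For item~\eqref{triv10item03}, I would set $\sigma^\ast=\inf(\{s\in[\rho,\infty):Z^\ast_s=0\}\cup\{\infty\})$, so that $\mathcal{S}_n(\tau,Y^{[0]})=\max(\sigma^\triangle,\sigma^\Box)$ directly from the definition. A conditioning argument using \eqref{c2} together with Lemma~\ref{comptriv} shows $\sigma^\ast<\infty$ almost surely, which gives $\mathcal{T}_n=\mathcal{S}_n$ a.s. On the event $\{Z^\triangle_\rho\leq Z^\Box_\rho\}$, items \eqref{triv10item02}--\eqref{triv10item02a} yield $Z^\triangle_s\leq Z^\Box_s$ for all $s\geq\rho$; at $s=\sigma^\Box\geq\sigma^\triangle$, non-negativity of $Z^\triangle$ forces $Z^\triangle_{\sigma^\Box}=0=Z^\Box_{\sigma^\Box}$, whereas for $\rho\leq s<\sigma^\Box$ the positivity $Z^\Box_s>0$ keeps $\max_\ast Z^\ast_s>0$; the complementary event is handled symmetrically. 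The main technical obstacles are the filtration setup in item~\eqref{triv10item01a}, since $\rho$ need not be a stopping time for the ambient filtration fixed in Section~\ref{setproc}, and the conditioning step needed to promote Lemma~\ref{lemcomparison} from deterministic to independent random initial values.
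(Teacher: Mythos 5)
Your plan is correct and rests on the same two key structural facts the paper uses: the shift identity $W^\ast_{t+\rho}-W^\ast_\rho=\tilde W^{(2)}_t$ and the $\tilde W^{(2)}$-independence of the pair $(W^\ast|_{[0,\rho]},\tau)$. Where you diverge is in how the randomness of $\tau$ is handled. The paper first proves all five items under the simplifying assumption that $\tau\equiv\mathfrak{t}$ is a deterministic constant; then $\rho$ and the knots $\mathfrak{T}^k_n(\tau)$ are numbers, the independence and shift relations immediately yield items~(i)--(ii), Lemma~\ref{lemcomparison} gives (iii)--(iv), Lemma~\ref{lemhitzero} gives (v) via the almost-sure finiteness of the hitting time, and the general $\tau$ case is recovered by a Fubini/disintegration argument using that $\tau$ and $Y^{[0]}$ are independent. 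You instead treat random $\tau$ directly by adjoining $\sigma(\tau)$ to the time-zero $\sigma$-algebra, so that $\rho$ becomes a stopping time and Lemma~\ref{triv1} can be invoked. This is a legitimate alternative, but it requires extra verification: Lemma~\ref{comptriv1} only asserts that $W^\ast$ is a Brownian motion (i.e.\ for its own natural filtration), so showing it is a Brownian motion for the enlarged filtration needs precisely the conditioning-on-$\tau$ argument that the paper's two-step structure packages cleanly, and one similarly has to argue that $Z^\ast_\rho$ is measurable relative to a $\sigma$-algebra that remains independent of $\tilde W^{(2)}$ even though $\rho$ is random. You correctly flag these as the main obstacles; they are exactly the points the constant-$\tau$ reduction is designed to avoid, so your route, while workable, buys nothing in simplicity. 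Your treatment of items~(iii)--(v), including conditioning on the random initial values to apply Lemma~\ref{lemcomparison} and using Lemma~\ref{comptriv} for finiteness of the hitting time, matches the paper's intent and fills in a bit more detail than the paper's terse statement of step~(v).
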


\begin{proof}[Proof of Lemma~\ref{triv10}]
We prove Lemma~\ref{triv10} in two steps.
In the first step we assume that there exists a real number 
$ 
  \mathfrak{t} \in [0,\infty)
$
such that
for all $ \omega \in \Omega $
it holds that
$ \tau( \omega ) = \mathfrak{t} $.
Observe that 
\begin{equation}
\label{eq:independence_W2}
  \tilde{W}^{ (2) } 
\qquad
\text{and} 
\qquad
  \big( 
    W^{ \triangle }|_{ [0, \mathfrak{T}^2_n(\mathfrak{t}) ] \times \Omega }
    , 
    W^{ \Box }|_{ [0, \mathfrak{T}^2_n(\mathfrak{t}) ] \times \Omega }
  \big)
\end{equation}
are independent. Moreover, note that
for every 
$
  \ast \in \{ \triangle, \Box \}
$,
$
  t \in [0,\infty)
$ 
it holds that
\begin{equation}
  W^{ \ast }_{ t + \mathfrak{T}^2_n(\mathfrak{t}) }
  -
  W^{ \ast }_{ \mathfrak{T}^2_n(\mathfrak{t}) }
  =
  \tilde{W}^{ (2) }_t .
\end{equation}
Combining this and \eqref{eq:independence_W2} 
proves items~\eqref{triv10item01}--\eqref{triv10item01a}.
Next note that Lemma~\ref{lemcomparison},
item~\eqref{triv10item01},
and item~\eqref{triv10item01a}
establish
item~\eqref{triv10item02} and item~\eqref{triv10item02a}.
Moreover, observe that
Lemma~\ref{lemhitzero} implies
that
\begin{equation}
\label{d1}
  \P( \mathcal{S}_n( \tau, Y^{ [0] } )
    =
    \mathcal{T}_n( \tau, Y^{ [0] } ) )
  = 1 .
\end{equation}
This, item~\eqref{triv10item02}, and item~\eqref{triv10item02a} 
establish item~\eqref{triv10item03}.
The case of a general $ \tau $ follows immediately 
from the case of a constant $ \tau $
by using the fact that $ Y^{ [0] } $ and $ \tau $ are independent.
The proof of Lemma~\ref{triv10} is thus completed.
\end{proof}

\subsubsection{Properties of the constructed random times}

\begin{lem}
\label{trivcor}
Assume the setting in Section~\ref{setproc} 
and let $ n \in \N $.
Then
\begin{enumerate}[(i)]
\item
\label{trivcorit1}
it holds 
for all $ m \in \N_0 $ 
that
\begin{equation}
\label{e2}
  0 \leq
  X_1^{ (n), [m] }
  \leq 
  X_2^{ (n), [m] } = X_1^{ (n), [m+1] }
  \leq 
  X_2^{ (n), [m+1] },
\end{equation}
\item 
it holds that
\begin{equation}
\label{e1}
  \sup_{ m \in \N_0 } X_1^{(n),[m]}
  =
  \sup_{ m \in \N_0 } X_2^{(n),[m]}
  =
  \infty ,
\end{equation}
\item
it holds for all $ m \in \N $, $ i \in \{ 5, 6 \} $ 
that
\begin{equation}\label{e3}
  \P\!\left(
    \big( X_i^{(n),[m]} \big)_{0}
    = 0
  \right)
  = 1 ,
\end{equation}
and
\item
\label{mnbbv}
it holds for all $ m \in \N_0 $, $ i \in \{ 5, 6 \} $ that
\begin{equation}
\label{impconst}
  \P\!\left(
    \big( 
      X_i^{(n),[m]}
    \big)_{ X_2^{ (n), [m] } - X_1^{ (n), [m] } }
    = 0
  \right)
  = 1 .
\end{equation}
\end{enumerate}
\end{lem}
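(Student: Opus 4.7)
The plan is to unfold the recursive definition $X^{(n),[m]} = \Phi_n(X_2^{(n),[m-1]}, Y^{[m]})$ via \eqref{eq:defPhi_n} and extract each claim, reducing the nontrivial parts to Lemma~\ref{comptriv} and Lemma~\ref{triv10}. Only item~\eqref{mnbbv} will require real work; the first three are essentially bookkeeping on the components of $\Phi_n$.

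First I would establish item~\eqref{trivcorit1}. For $m \geq 1$, the first and second coordinates of $\Phi_n$ in \eqref{eq:defPhi_n} directly give $X_1^{(n),[m]} = X_2^{(n),[m-1]}$ and $X_2^{(n),[m]} = X_1^{(n),[m]} + \mathcal{T}_n(X_2^{(n),[m-1]}, Y^{[m]})$, and since $\mathcal{T}_n$ takes values in $[1/n,\infty)$, I obtain $X_1^{(n),[m]} + 1/n \leq X_2^{(n),[m]}$. The base case $m=0$ follows from inspecting the two branches of \eqref{eq:defX}, where $X_1^{(n),[0]} = 0$ and $X_2^{(n),[0]} \geq 0$. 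Item (ii) is then a deterministic consequence: iterating $X_2^{(n),[m]} \geq X_2^{(n),[m-1]} + 1/n$ yields $X_2^{(n),[m]} \geq m/n \to \infty$, and the identity $X_1^{(n),[m+1]} = X_2^{(n),[m]}$ transfers this to the first coordinate. Item (iii) is immediate from the SDE defining $\mathcal{Z}^{0,\delta,b,W}$: for $m \geq 1$ the coordinates $X_5^{(n),[m]}$ and $X_6^{(n),[m]}$ are squared Bessel processes started at $0$, so their values at time $0$ vanish.

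The heart of the proof is item~\eqref{mnbbv}. For $m=0$, I would first invoke Lemma~\ref{comptriv} to discard (as a $\P$-null set) the first case of \eqref{eq:defX}; in the remaining second case, $X_2^{(n),[0]} - X_1^{(n),[0]}$ equals the hitting time of $0$ of $\mathcal{Z}^{Z,\delta,b,\tilde{W}}$, and continuity of sample paths makes \eqref{impconst} immediate. For $m \geq 1$, I would apply Lemma~\ref{triv10} with $\tau := X_2^{(n),[m-1]}$ in place of $\tau$ and $Y^{[m]}$ in place of $Y^{[0]}$; the required independence holds because $X_2^{(n),[m-1]}$ is $\sigma_\Omega(Z, \tilde{W}, Y^{[1]}, \dots, Y^{[m-1]})$-measurable and the family $(Y^{[k]})_{k \in \N_0}$ is i.i.d. Lemma~\ref{triv10}\eqref{triv10item03} then identifies $\mathcal{T}_n(X_2^{(n),[m-1]}, Y^{[m]}) = X_2^{(n),[m]} - X_1^{(n),[m]}$ $\P$-a.s.\ with the first time $t \geq \mathfrak{T}^2_n(X_1^{(n),[m]})$ at which $\max\{X_5^{(n),[m]}_t, X_6^{(n),[m]}_t\} = 0$, yielding \eqref{impconst}.

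The hard part will be verifying that the joint hitting time on the right-hand side of Lemma~\ref{triv10}\eqref{triv10item03} is $\P$-a.s.\ finite, so that the ``$\{\infty\}$'' branch is negligible and both coordinates genuinely reach $0$ simultaneously. This should follow from Lemma~\ref{triv10}\eqref{triv10item02}--\eqref{triv10item02a}, which comparison-couples $Z^\triangle$ and $Z^\Box$ after $\rho$ so that the ordering of their values at $\rho$ persists for all later times, combined with Lemma~\ref{lemhitzero} applied under $\delta < 2$ to the dominating process (a squared Bessel process that is started positive or at zero and therefore hits $0$ a.s.). A secondary subtlety is cleanly setting up the independence input to Lemma~\ref{triv10} when iterating in $m$; this is precisely where the i.i.d.\ structure of $(Y^{[k]})_{k\in\N_0}$ is needed.
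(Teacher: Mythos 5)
Your proposal is correct and follows essentially the same route as the paper: items (i)--(ii) are bookkeeping on the coordinates of $\Phi_n$ in \eqref{eq:defPhi_n}, item (iii) uses that the driving paths are Brownian motions so that the fifth and sixth coordinates are squared Bessel processes started at $0$, and item~\eqref{mnbbv} is handled by Lemma~\ref{comptriv} for $m=0$ and by item~\eqref{triv10item03} of Lemma~\ref{triv10} for $m\geq 1$, applied with $\tau=X_2^{(n),[m-1]}$ and $Y^{[0]}$ replaced by $Y^{[m]}$. The one remark I would make is that your final paragraph re-opens the finiteness of the joint hitting time as a hard point, but this is already settled inside Lemma~\ref{triv10}\eqref{triv10item03}: that item asserts $\mathcal{S}_n(\tau,Y^{[0]})=\mathcal{T}_n(\tau,Y^{[0]})$ $\P$-a.s., and since $\mathcal{T}_n$ takes values in $[\nicefrac{1}{n},\infty)$ this already forces $\mathcal{S}_n<\infty$ a.s.\ and hence the hitting set to be a.s.\ nonempty, so no further verification is required at the level of Lemma~\ref{trivcor}.
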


\begin{proof}[Proof of Lemma~\ref{trivcor}]
First, observe that \eqref{e2} is a direct consequence from \eqref{eq:defX}.
Next note that for all 
$ t \in [0,\infty) $, 
$ y \in [\mathcal{C}_0]^3 \times \mathcal{C}_{00} $ 
it holds that 
\begin{equation}
  \mathcal{T}_n( t, y ) \geq 
  \tfrac{ 1 }{ n }
  .
\end{equation}
Combining this with \eqref{eq:defPhi_n} establishes \eqref{e1}.
In the next step we observe that 
for every $ m \in \N $ and every $ i \in \{ 3, 4 \} $ 
it holds that 
the stochastic process 
$ X_i^{ (n), [m] } $ is a Brownian motion.
This establishes \eqref{e3}.
It thus remains to prove \eqref{impconst}.
For this we note that Lemma~\ref{comptriv} assures that
for all $ i \in \{ 5, 6 \} $ it holds that
\begin{equation}
\label{impconst0}
  \P\!\left(
    \big( 
      X_i^{(n),[0]}
    \big)_{ X_2^{ (n), [0] } - X_1^{ (n), [0] } }
    = 0
  \right)
  = 1 .
\end{equation}
In addition, observe that
item~\eqref{triv10item03} of Lemma~\ref{triv10}
ensures that
for all $ m \in \N $, $ i \in \{ 5, 6 \} $ 
it holds that
\begin{equation}
\label{impconstm}
  \P\!\left(
    \big( 
      X_i^{(n),[m]}
    \big)_{ X_2^{ (n), [m] } - X_1^{ (n), [m] } }
    = 0
  \right)
  = 1 .
\end{equation}
Combining \eqref{impconst0} and \eqref{impconstm} establishes \eqref{impconst}.
The proof of Lemma~\ref{trivcor} is thus completed.
\end{proof}

\subsubsection{Properties of the constructed Brownian motions}

\begin{lem}
\label{eq4l}
Assume the setting in Section~\ref{setproc} and let $ n \in \N $.
Then it holds for every 
$ t \in \{ 0, \nicefrac{ 1 }{ n }, \nicefrac{ 2 }{ n }, \dots \} $ 
that
\begin{equation}
\label{claim10}
  W_t^{(n),\triangle}
  = W_t^{(n),\Box}
  .
\end{equation}
\end{lem}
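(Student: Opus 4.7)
The plan is to establish \eqref{claim10} by induction on $m\in\N_0$ of the statement: at $t=X_{2}^{(n),[m]}$ and at every grid point $t\in\{0,1/n,2/n,\dots\}\cap[0,X_{2}^{(n),[m]}]$, we have $W_{t}^{(n),\triangle}=W_{t}^{(n),\Box}$. Since Lemma~\ref{trivcor} yields $\sup_{m}X_{2}^{(n),[m]}=\infty$, every grid point lies in some $[0,X_{2}^{(n),[m]}]$, so this induction suffices.

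The key auxiliary observation is: for every $r\in[0,1/n)$, every $y=(w^{(1)},w^{\triangle},w^{(2)},f)\in[\mathcal{C}_{0}]^{3}\times\mathcal{C}_{00}$ and every $s\in[0,r]\cup[r+1/n,\infty)$, it holds that $(F_{n}^{\triangle}(r,y))_{s}=(F_{n}^{\Box}(r,y))_{s}$. Indeed, on $[0,r]$ both equal $w^{(1)}_{s}$ and on $[r+1/n,\infty)$ both equal $w^{(2)}_{s-(r+1/n)}+w^{\triangle}_{1/n}+w^{(1)}_{r}$; at the boundary $s=r$ both middle-piece formulas collapse to $w^{(1)}_{r}$ because $G_{n}^{\ast}(w^{\triangle},f)_{0}=0$, and at $s=r+1/n$ both give $w^{\triangle}_{1/n}+w^{(1)}_{r}$ because $G_{n}^{\triangle}(w^{\triangle},f)_{1/n}=w^{\triangle}_{1/n}=G_{n}^{\Box}(w^{\triangle},f)_{1/n}$, where the latter equality crucially uses $f_{1}=0$ (the Brownian bridge endpoint condition built into $\mathcal{C}_{00}$).

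To deploy this observation in the induction step, set $r_{m}=\mathfrak{T}_{n}^{1}(X_{1}^{(n),[m]})$; then $X_{1}^{(n),[m]}+r_{m}$ is the first grid point $\geq X_{1}^{(n),[m]}$, so the grid points lying in $[X_{1}^{(n),[m]},X_{2}^{(n),[m]}]$ correspond precisely to shifts $s=r_{m}+k/n$ for $k\in\N_{0}$, all of which avoid the open interval $(r_{m},r_{m}+1/n)$. Moreover, $X_{2}^{(n),[m]}-X_{1}^{(n),[m]}=\mathcal{T}_{n}(X_{1}^{(n),[m]},Y^{[m]})\geq\mathfrak{T}_{n}^{2}(X_{1}^{(n),[m]})=r_{m}+1/n$, so the right endpoint also lies outside the bad interval. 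Hence the auxiliary observation gives $(X_{3}^{(n),[m]})_{s}=(X_{4}^{(n),[m]})_{s}$ at every such $s$, and combining this with the defining identity $W_{t}^{(n),\ast}=W_{X_{1}^{(n),[m]}}^{(n),\ast}+(X_{\mathfrak{v}(\ast)}^{(n),[m]})_{t-X_{1}^{(n),[m]}}$ and the induction hypothesis (which gives the anchor equality at $X_{1}^{(n),[m]}=X_{2}^{(n),[m-1]}$) closes the step. The base case $m=0$ is immediate from $X_{1}^{(n),[0]}=0$ and $X_{3}^{(n),[0]}=X_{4}^{(n),[0]}=\tilde{W}$.

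The only delicate point — and hence the main obstacle — is the endpoint calculation $G_{n}^{\Box}(w^{\triangle},f)_{1/n}=w^{\triangle}_{1/n}$, which relies on the bridge condition $f(1)=0$; this is exactly why $\mathcal{C}_{00}$ and not simply $\mathcal{C}_{0}$ is used in the construction. Once this identity is in hand, everything else is bookkeeping about which piece of the piecewise definitions a given $s$ lands in.
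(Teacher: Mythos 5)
Your proof is correct and follows essentially the same approach as the paper: it rests on the pointwise identity $(F_n^\triangle(r,y))_s = (F_n^\Box(r,y))_s$ for $s \in [0,r]\cup[r+\nicefrac{1}{n},\infty)$, the length bound $X_2^{(n),[m]}-X_1^{(n),[m]} \geq \mathfrak{T}^1_n(X_1^{(n),[m]})+\nicefrac{1}{n}$, and the inductive propagation of the anchor equality at $X_1^{(n),[m]} = X_2^{(n),[m-1]}$ through the piecewise definition of $W^{(n),\ast}$; the paper records these as \eqref{equkgk}--\eqref{eqgleich4} without spelling out the induction. One small remark: the endpoint $s=r+\nicefrac{1}{n}$ can be read off directly from the third branch of $F_n^\ast$, which is identical for both $\ast$ irrespective of $f(1)$; the bridge condition $f(1)=0$ is what makes $F_n^\Box$ continuous (i.e., well-defined), rather than being the lever for this particular equality.
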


\begin{proof}[Proof of Lemma~\ref{eq4l}]
First, observe that it holds for all $ t \in [0,\infty) $ that
\begin{equation}
  ( X_3^{ (n), [0] } )_t
  = 
  ( X_4^{ (n), [0] } )_t
  .
\end{equation}
Hence, we obtain for all $ k \in \N_0 $ that
\begin{equation}
\label{eqgleich1}
  ( X_3^{ (n), [0] } )_{ k / n }
  =
  ( X_4^{ (n), [0] } )_{ k / n }
\end{equation}
and
\begin{equation}
\label{eqgleich2}
  (X_3^{(n),[0]})_{X^{(n),[0]}_2}
  =
  (X_4^{(n),[0]})_{X^{(n),[0]}_2}.
\end{equation}
Next note that it holds for all
$ 
  r \in [0,\infty) 
$,
$
  y \in [ \mathcal{C}_0 ]^3 \times \mathcal{C}_{00}
$,
$
  t \in [0,r] \cup [r + \nicefrac{ 1 }{ n }, \infty ) 
$
that
\begin{equation}
\label{equkgk}
  \big( 
    F_n^{ \triangle }( r, y ) 
  \big)_t
  =
  \big( 
    F_n^\Box( r, y )
  \big)_t
  .
\end{equation}
Moreover, observe that it holds for all $ m \in \N $ that
\begin{equation}
\label{equkgk1}
\begin{split}
  X_2^{ (n), [m] } 
  -
  X_1^{ (n), [m] }
& =
  \mathcal{T}_n\big( X_2^{ (n), [m-1] }, Y^{ [m] } \big)
\\
& \geq
  \mathfrak{T}^2_n\big( X_2^{ (n), [m-1] } \big)
=
  \mathfrak{T}^1_n(X_1^{(n),[m]}) 
  + \nicefrac{ 1 }{ n } 
  .
\end{split}
\end{equation}
This and \eqref{equkgk} yield that for all 
$ m \in \N $, $ k \in \N_0 $ 
it holds that
\begin{equation}
\label{eqgleich3}
\begin{split}
  ( X_3^{(n),[m]} )_{\mathfrak{T}^1_n(X_1^{(n),[m]})+k/n}
&=\left(F^{ \triangle }_n(\mathfrak{T}^1_n(X_1^{(n),[m]}),Y^{[m]})\right)_{\mathfrak{T}^1_n(X_1^{(n),[m]})+k/n}\\
&=\left(F^\Box_n(\mathfrak{T}^1_n(X_1^{(n),[m]}),Y^{[m]})\right)_{\mathfrak{T}^1_n(X_1^{(n),[m]})+k/n}\\
&=(X_4^{(n),[m]})_{\mathfrak{T}^1_n(X_1^{(n),[m]})+k/n}
\end{split}
\end{equation}
and
\begin{equation}\label{eqgleich4}
\begin{split}
(X_3^{(n),[m]})_{X^{(n),[m]}_2-X_1^{(n),[m]}}
&=\left(F^{ \triangle }_n(\mathfrak{T}^1_n(X_1^{(n),[m]}),Y^{[m]})\right)_{X^{(n),[m]}_2-X_1^{(n),[m]}}\\
&=\left(F^\Box_n(\mathfrak{T}^1_n(X_1^{(n),[m]}),Y^{[m]})\right)_{X^{(n),[m]}_2-X_1^{(n),[m]}}\\
&=(X_4^{(n),[m]})_{X^{(n),[m]}_2-X_1^{(n),[m]}}.
\end{split}
\end{equation}
Combining
\eqref{eqgleich1},
\eqref{eqgleich2},
\eqref{eqgleich3},
and \eqref{eqgleich4}
proves \eqref{claim10}.
The proof of Lemma~\ref{eq4l} is thus completed.
\end{proof}

\subsubsection{Properties of the constructed squared Bessel processes}

\begin{lem}
\label{lembasic1}
Assume the setting in Section~\ref{setproc}
and let $n\in\N$, $\ast\in\{\triangle, \Box\}$.
Then 
\begin{enumerate}[(i)]
\item 
\label{item:lembasic1_1}
it holds that $ W^{ (n), \ast } $ is a Brownian motion,
\item 
\label{item:lembasic1_2}
it holds that
$ W^{ (n), \ast } $
and $ Z $
are independent, and 
\item
it holds that
\begin{equation}
\P\!
\left(
\forall\, t\in[0,\infty)\colon
Z_t^{(n),\ast}
=\mathcal{Z}_t^{Z,\delta,b,W^{(n),\ast}}
\right)
=1.
\end{equation}
\end{enumerate}
\end{lem}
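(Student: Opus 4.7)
The plan is to realize $W^{(n),\ast}$ and $Z^{(n),\ast}$ as the piecewise concatenations appearing in Lemma~\ref{lem1}, and then read off the three claims from items (i)--(iii) of that lemma. To this end I would set $\bar Z = Z$ and, for every $m\in\N_0$, choose
\begin{equation}
W^{(m)} = \begin{cases}\tilde W &\colon m=0\\ F^{\ast}_n\!\left(\mathfrak{T}^1_n(X_2^{(n),[m-1]}),Y^{[m]}\right) &\colon m\geq 1\end{cases},\qquad \tau^{(m)} = X_2^{(n),[m]}-X_1^{(n),[m]}.
\end{equation}
Lemma~\ref{comptriv}, Lemma~\ref{comptriv1} (applied with $\tau=X_2^{(n),[m-1]}$, which is independent of $Y^{[m]}$ by the joint independence assumption on $\tilde W,\tilde W^{(1)},\tilde W^{\triangle},\tilde W^{(2)},B,Z,Y^{[1]},Y^{[2]},\ldots$), and the fact $\mathcal{T}_n\geq\nicefrac{1}{n}$ give that each $W^{(m)}$ is a Brownian motion, each $\tau^{(m)}$ is strictly positive on a set of full measure and $\sum_{m=0}^\infty\tau^{(m)}=\infty$.

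Next I would construct the normal filtrations $\mathbb F^{(m)}$ inductively so that $W^{(m)}$ is a $\mathbb F^{(m)}$-Brownian motion, $\tau^{(m)}$ is a $\mathbb F^{(m)}$-stopping time, and $\bigcup_{t\in[0,\infty)}\mathbb F^{(m)}_t\subseteq\mathbb F^{(m+1)}_0$. Concretely: take $\mathbb F^{(0)}$ as the augmented natural filtration generated by $Z$ and $\tilde W$, take $\mathbb F^{(m)}$ for $m\geq 1$ as the augmented filtration generated by $(Z,\tilde W,Y^{[1]},\ldots,Y^{[m-1]})$ together with the natural filtration of the $m$-th piece $W^{(m)}$ shifted so that it starts at time $0$. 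The independence assumptions of Section~\ref{setproc} together with Lemma~\ref{comptriv1} ensure all the measurability and stopping-time requirements. Crucially, $\tau^{(0)}$ equals the hitting time of $0$ of $\mathcal Z^{Z,\delta,b,\tilde W}$ (finite a.s.\ by Lemma~\ref{comptriv}), so it is $\mathbb F^{(0)}$-stopping, while for $m\geq 1$ the variable $\tau^{(m)}=\mathcal{T}_n(X_2^{(n),[m-1]},Y^{[m]})$ is measurable in $\mathbb F^{(m)}_0$ augmented by the Brownian motion $W^{(m)}$.

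I would then compute the processes $Z^{(m)}$ defined recursively in Lemma~\ref{lem1} by $Z^{(m+1)}=\mathcal{Z}^{Z^{(m)},W^{(m)}}_{\tau^{(m)}}$. Since $\tau^{(0)}$ is by construction a hitting time of $0$, we obtain $Z^{(1)}=0$ $\P$-a.s.; by induction using item (\ref{mnbbv}) of Lemma~\ref{trivcor}, which states that each piece $(X_i^{(n),[m]})$, $i\in\{5,6\}$, returns to $0$ at time $\tau^{(m)}$, we get $Z^{(m)}=0$ $\P$-a.s.\ for every $m\geq 1$. The auxiliary process $\tilde Z$ of Lemma~\ref{lem1} therefore agrees with the concatenation of the processes $(X_{\mathfrak v(\ast)+2}^{(n),[m]})$ shifted by $X_1^{(n),[m]}$, which by the definition of $Z^{(n),\ast}$ means $\tilde Z=Z^{(n),\ast}$ $\P$-a.s. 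Analogously, the concatenated Brownian motion $W$ of Lemma~\ref{lem1} coincides with $W^{(n),\ast}$ by the defining property $[W_t^{(n),\ast}-W_{X_1^{(n),[m]}}^{(n),\ast}-(X_{\mathfrak v(\ast)}^{(n),[m]})_{|t-X_1^{(n),[m]}|}]\mathbbm 1_{\{X_1^{(n),[m]}\leq t\leq X_2^{(n),[m]}\}}=0$. Applying items (i)--(iii) of Lemma~\ref{lem1} now yields the three claims.

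The main obstacle is the bookkeeping of the filtrations: one has to show that the hitting time defining $\tau^{(0)}$ and, for $m\geq 1$, the function $\mathcal T_n(X_2^{(n),[m-1]},Y^{[m]})$ are stopping times in the appropriate filtration, and that the ``handing-over'' condition $\bigcup_t\mathbb F^{(m)}_t\subseteq\mathbb F^{(m+1)}_0$ holds. Once the filtrations are set up, everything else---Brownian-ness, independence from $Z$, and the SDE identity $Z^{(n),\ast}=\mathcal Z^{Z,\delta,b,W^{(n),\ast}}$---is an immediate consequence of Lemma~\ref{lem1}.
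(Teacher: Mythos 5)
Your overall strategy---realize $W^{(n),\ast}$ and $Z^{(n),\ast}$ as the concatenations of Lemma~\ref{lem1} and read off items (i)--(iii) from that lemma---is the same as the paper's. The random times $\tau^{(m)}=X_2^{(n),[m]}-X_1^{(n),[m]}$ and the Brownian motions $W^{(m)}=X_{\mathfrak v(\ast)}^{(n),[m]}$ are the right choices, and the observations that $\tau^{(0)}$ is the first hitting time of $0$, that each $\tau^{(m)}$ is bounded below by $1/n$ so $\sum_m\tau^{(m)}=\infty$, and that $Z^{(m)}=0$ for $m\geq 1$ (via item~\eqref{mnbbv} of Lemma~\ref{trivcor}) are all correct.

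The gap is in the filtration you propose for $m\geq 1$: it is $\sigma_\Omega(Z,\tilde W,Y^{[1]},\dots,Y^{[m-1]},W^{(m)}_s\colon s\leq t)$ (augmented). With $\ast=\triangle$, $W^{(m)}=F_n^\triangle(\cdot,Y^{[m]})$ depends on $Y_1^{[m]},Y_2^{[m]},Y_3^{[m]}$ but \emph{not} on $Y_4^{[m]}$ (the Brownian bridge), since $G_n^\triangle$ discards the bridge argument. However, $\tau^{(m)}=\mathcal T_n(X_2^{(n),[m-1]},Y^{[m]})$ is the maximum over \emph{both} $\ast\in\{\triangle,\Box\}$ of the hitting times of $\mathcal Z^{0,\delta,b,F_n^\ast(\cdot,Y^{[m]})}$; the $\Box$-process depends on $Y_4^{[m]}$ through $G_n^\Box$. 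Consequently $\tau^{(m)}$ is not even $\bigcup_t\mathbb F^{(m)}_t$-measurable in your filtration, let alone a $\mathbb F^{(m)}$-stopping time, so Lemma~\ref{lem1} cannot be applied. The resolution (which the paper uses) is to enlarge $\mathbb F^{(m)}_0$ by $\sigma_\Omega(Y_4^{[m]})$; this makes $\tau^{(m)}$ a stopping time, and it is then an essential but nontrivial observation that $W^{(m)}$ remains a Brownian motion in the enlarged filtration \emph{because} it is independent of $Y_4^{[m]}$. You flag the stopping-time verification as "the main obstacle" but then assert rather than prove it, and with the filtration as you specified the assertion is actually false. A parallel adjustment (adding the bridge part of $Y_2^{[m]}$, not $Y_4^{[m]}$) is needed for the case $\ast=\Box$, which your proposal also does not address.
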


\begin{proof}[Proof of Lemma~\ref{lembasic1}]
We present the proof of Lemma~\ref{lembasic1} in the case $ \ast = \triangle $. 
The case $ \ast = \Box $ is handled similarly.
Throughout this proof
for every $ m \in \N_0 $
let 
$
  W^{ (m) } \colon \Omega \to \mathcal C_0
$
be the Brownian motion given by
$
  W^{(m)}=X^{(n),[m]}_3
$,
for every $m\in\N_0$
let $\tau^{(m)}\colon \Omega\to [0,\infty)$
be
the random variable
given by $\tau^{(m)}=X^{(n),[m]}_2-X^{(n),[m]}_1$,
for every $m\in\N_0$ let $\mathbb{F}^{(m)}=(\mathbb{F}_t^{(m)})_{t\in [0,\infty)}$ be the normal filtration on $(\Omega,\mathfrak{F},\P)$ which satisfies for all $t\in [0,\infty)$, $m\in\N$ that
\begin{align}
\mathbb{F}_t^{(0)}
=
\sigma_\Omega
\Big(
\sigma_\Omega\big(
W^{(0)}_s\colon s\in [0,t]
\big)
\cup
\sigma_\Omega(
Z
)
\cup
\big\{A\in\mathfrak{F}\colon \P(A)=0\big\}
\Big)
\end{align}
and
\begin{multline}
\mathbb{F}_t^{(m)}
=
\sigma_\Omega
\Big(
\sigma_\Omega\big(
W^{(m)}_s\colon s\in [0,t]
\big)
\cup
\sigma_\Omega\big(
Z,\tilde{W},Y^{[m]}_4
\big)\\
\cup
\sigma_\Omega\big(
Y^{[k]}\colon k\in \N\cap[1,m-1]
\big)
\cup
\big\{A\in\mathfrak{F}\colon \P(A)=0\big\}
\Big),
\end{multline}
and for every $m\in\N$ let $\tilde{t}_1^{(m)}\colon\Omega\to [0,\nicefrac{1}{n})$
be the random variable given by
$\tilde{t}_1^{(m)}
=\mathfrak{T}^1_n(X^{(n),[m-1]}_2)$
and let
$\tilde{t}_2^{(m)}\colon\Omega\to [\nicefrac{1}{n},\nicefrac{2}{n})$
be the random variable given by
$\tilde{t}_2^{(m)}
=\mathfrak{T}^2_n(X^{(n),[m-1]}_2)$.
Note that for every $m\in\N_0$ it holds that $ W^{(m)} $ is a $\mathbb{F}^{(m)}$-Brownian motion.
Next note that
for every $m\in\N_0$ it holds that
\begin{align}\label{brauche1}
(\cup_{u\in [0,\infty)}\mathbb{F}^{(m)}_u)\subseteq \mathbb{F}^{(m+1)}_0.
\end{align}
Lemma~\ref{comptriv} implies that $\tau^{(0)}$ is a $\mathbb{F}^{(0)}$-stopping time.
Observe that
for every $m\in\N$
it holds that
$\tilde{t}_1^{(m)}$
is
$ \mathbb{F}^{(m)}_0 $/$ \mathcal{B}( [0,\nicefrac{1}{n}) ) $-measurable.
Moreover, note that
for every $m\in\N$
it holds that
$\tilde{t}_2^{(m)}$
is
$ \mathbb{F}^{(m)}_0 $/$ \mathcal{B}( [\nicefrac{1}{n},\nicefrac{2}{n}) ) $-measurable.
Item~\eqref{triv10item03} of Lemma~\ref{triv10} implies that for all $m\in\N$ it holds $\P$-a.s.\ that
\begin{equation}\label{yxcv}
\begin{split}
\tau^{(m)}
&=X^{(n),[m]}_2-X^{(n),[m]}_1
=
\mathcal{T}_n(X_2^{(n),[m-1]},Y^{[m]})
=
\mathcal{S}_n(X_2^{(n),[m-1]},Y^{[m]})
\\
&=
\max_{\ast\in \{\triangle,\Box\}}
\bigg[
\inf\!
\Big(
\Big\{
t\in[0,\infty)\colon 
t\geq \mathfrak{T}^2_n(X_2^{(n),[m-1]})
\text{ and }\\
&\qquad
\mathcal{Z}^{0,\delta,b,F^\ast_n(\mathfrak T^1_n(X_2^{(n),[m-1]}),Y^{[m]})}_t
=0
\Big\}
\cup\{\infty\}
\Big)
\bigg]
\\
&=
\max_{i\in \{5,6\}}\left[
\inf\!
\big(
\{t\in [0,\infty)\colon t\geq \tilde{t}_2^{(m)}\text{ and }X^{(n),[m]}_i=0\}
\cup
\{\infty\}
\big)
\right].
\end{split}
\end{equation}
Observe that
for every $m\in\N$, $t\in [0,\infty)$
it holds that
\begin{equation}
\begin{split}
&(X_4^{(n),[m]})_t
\\
&=
\begin{cases}
(X_3^{(n),[m]})_t &\colon 0\leq t\leq \tilde{t}_1^{(m)}
\\
n\left[(X_3^{(n),[m]})_{\tilde{t}_2^{(m)}}-(X_3^{(n),[m]})_{\tilde{t}_1^{(m)}}\right]
\cdot
[t-\tilde{t}_1^{(m)}]
\\
+\frac{1}{\sqrt{n}}(Y_4^{[m]})_{n(t-\tilde{t}_1^{(m)})}
+
(X_3^{(n),[m]})_{\tilde{t}_1^{(m)}}
&\colon \tilde{t}_1^{(m)}\leq t\leq \tilde{t}_2^{(m)}
\\
(X_3^{(n),[m]})_t &\colon \tilde{t}_2^{(m)}\leq t<\infty
\end{cases}.
\end{split}
\end{equation}
Hence, we obtain for every $m\in\N$, $t\in[0,\infty)$, $s\in [0,t]$ that
\begin{align}
\begin{split}
&(X_4^{(n),[m]})_s
\cdot
\mathbbm{1}^\Omega_{\{t\geq \tilde{t}_2^{(m)}\}}
\\
&=
W_s^{(m)}
\cdot
\mathbbm{1}^\Omega_{\{s\leq \tilde{t}_1^{(m)}\text{ or }s\geq \tilde{t}_2^{(m)}\}}
\cdot
\mathbbm{1}^\Omega_{\{t\geq \tilde{t}_2^{(m)}\}}
\\
&\quad+
\left(n(W^{(m)}_{t\wedge \tilde{t}_2^{(m)}}-W^{(m)}_{t\wedge \tilde{t}_1^{(m)}})
(s-\tilde{t}_1^{(m)})
+\frac{1}{\sqrt{n}}(Y_4^{[m]})_{1\wedge (n(s-\tilde{t}_1^{(m)}))\vee 0}
+W^{(m)}_{t\wedge\tilde{t}_1^{(m)}}
\right)
\\
&\qquad\cdot
\mathbbm{1}^\Omega_{\{\tilde{t}_1^{(m)}<s<\tilde{t}_2^{(m)}\}}
\cdot
\mathbbm{1}^\Omega_{\{t\geq \tilde{t}_2^{(m)}\}}.
\end{split}
\end{align}
This demonstrates for every $m\in\N$, $t\in[0,\infty)$, $s\in [0,t]$
that the function
$\Omega\ni \omega\mapsto (X_4^{(n),[m]})_{s}(\omega)\cdot \mathbbm{1}^\Omega_{\{t\geq \tilde{t}_2^{(m)}\}}(\omega)\in\R$
is
$ \mathbb{F}^{(m)}_t $/$ \mathcal{B}( \R ) $-measurable.
Therefore, we obtain that for every $i\in\{5,6\}$, $m\in\N$ it holds that
$((X^{(n),[m]}_i)_t\cdot \mathbbm{1}_{\{t\geq \tilde{t}_2^{(m)}\}})_{t\in [0,\infty)}$ is $\mathbb{F}^{(m)}$-adapted.
This implies that for every $i\in\{5,6\}$, $m\in\N$, $t\in [0,\infty)$ it holds that
\begin{multline}
\sup\{(X^{(n),[m]}_i)_s\colon s\in [0,t]\text{ and }s\geq \tilde{t}_2^{(m)}\}\\
=
\sup\{(X^{(n),[m]}_i)_s
\cdot
\mathbbm{1}^\Omega_{\{s\geq \tilde{t}_2^{(m)}\}}
\colon s\in [0,t]\text{ and }s\geq \tilde{t}_2^{(m)}\}
\end{multline}
and
\begin{multline}
\inf\{(X^{(n),[m]}_i)_s\colon s\in [0,t]\text{ and }s\geq \tilde{t}_2^{(m)}\}\\
=
\inf\{(X^{(n),[m]}_i)_s
\cdot
\mathbbm{1}^\Omega_{\{s\geq \tilde{t}_2^{(m)}\}}
\colon s\in [0,t]\text{ and }s\geq \tilde{t}_2^{(m)}\}
\end{multline}
are
$ \mathbb{F}^{(m)}_t $/$ \mathcal{B}( [-\infty,\infty] )$-measurable.
Hence, we get that for all $i\in\{5,6\}$, $m\in\N$ it holds that $\inf\{t\in [0,\infty)\colon t\geq \tilde{t}_2^{(m)}\text{ and }X^{(n),[m]}_i=0\}$ is a $\mathbb{F}^{(m)}$-stopping time.
This implies that for all $m\in\N$ it holds that
\begin{align}
\max_{i\in \{5,6\}}\big(\inf\{t\in [0,\infty)\colon t\geq \tilde{t}_2^{(m)}\text{ and }X^{(n),[m]}_i=0\}\big)
\end{align}
is a $\mathbb{F}^{(m)}$-stopping time.
Combining this and \eqref{yxcv} assures that for all $m\in\N$ it holds that $\tau^{(m)}$ is a $\mathbb{F}^{(m)}$-stopping time.
Next observe that it holds that
\begin{align}\label{brauche2}
\sum_{m\in\N_0}\tau^{(m)}
=\infty
\end{align}
and $Z$ is
$ \mathbb{F}_0^{(0)} $/$ \mathcal{B}( [0,\infty) )$-measurable.
Item~\eqref{mnbbv} of Lemma~\ref{trivcor} implies that for all $ m \in \N_0 $ 
it holds that 
\begin{equation}\label{brauche3}
  \P\!\left(
    ( X^{ (n), [m] }_5 )_{ \tau^{ (m) } } = 0 
  \right) = 1
  .
\end{equation}
Combining 
\eqref{brauche1},
the fact that
for every $ m \in \N_0 $ it holds that 
$ W^{(m)} $ is a $ \mathbb{F}^{(m)} $-Brownian motion,
the fact that
for every $ m \in \N_0 $ it holds that 
$
  \tau^{ (m) }
$ is a $ \mathbb{F}^{(m)} $-stopping time,
\eqref{brauche2},
the fact that
$ Z $ is
$ \mathbb{F}_0^{(0)} $/$ \mathcal{B}( [0,\infty) ) $-measurable,
\eqref{brauche3}, 
and
Lemma~\ref{lem1}
completes the proof of Lemma~\ref{lembasic1}.
\end{proof}

\subsubsection{On conditional distributions of the considered random objects}

\begin{lem}
\label{lembasic2}
Assume the setting in Section~\ref{setproc},
let $ n \in \N $,
and for every 
$ r \in [0,\infty) $ 
let 
$ 
  \P_r \colon 
  \mathcal{B}( 
    [0,\infty)^2 \times 
    [ \mathcal{C}_0 ]^2 \times 
    [ C( [0,\infty), \R ) ]^2
  )
  \to [0,1]
$ 
be the probability measure 
which satisfies for all 
$
  B \in \mathcal{B}(
    [0,\infty)^2 \times [ \mathcal{C}_0 ]^2 \times [ C( [0,\infty), \R) ]^2 
  )
$ 
that
\begin{equation}
  \P_r( B )
  =
  \P\Big(
    \big\{
      \Phi_n\big( 
        r, Y^{ [0] } 
      \big)
      \in B
    \big\}
    \, \big| \,
    \big\{ 
      \Phi_{ n, 2 }\big( 
        r, Y^{ [0] } 
      \big) > 1
    \big\}
  \Big)
  .
\end{equation}
Then it holds 
for all 
$
  B \in 
  \mathcal{B}( 
    [0,\infty)^2 \times [ \mathcal{C}_0 ]^2 \times [ C( [0,\infty), \R ) ]^2 
  ) 
$
that
\begin{equation}
  \P\!\left(
    \mathbbm{1}^{ \Omega }_{
      \{ 
        0 \leq \gamma_n \leq 1
      \}
    }
    \P\big(
      X^{ (n),[\mathcal{M}_n] } \in B 
      \, | \, 
      \sigma_\Omega(\gamma_n) 
    \big)
    =
    \mathbbm{1}^{ \Omega }_{
      \{ 
        0 \leq \gamma_n \leq 1
      \}
    }
    \P_{ \min\{\gamma_n,1\} }( B )
  \right) = 1
  .
\end{equation}
\end{lem}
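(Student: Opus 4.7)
The plan is to decompose the event $\{0 \leq \gamma_n \leq 1\}$ according to the value of $\mathcal{M}_n$ and then to read off the conditional distribution of $X^{(n),[\mathcal{M}_n]}$ given $\gamma_n$ by exploiting the independence of $Y^{[m]}$ from the past.

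First I would observe that $\mathcal{M}_n$ actually takes values in $\{0,1,\dots,n\}$ almost surely. Indeed, the fact that $\mathcal{T}_n \geq \tfrac{1}{n}$ together with the recursion $X_1^{(n),[m]} = X_2^{(n),[m-1]} = X_1^{(n),[m-1]} + \mathcal{T}_n(X_1^{(n),[m-1]}, Y^{[m]})$ (and $X_1^{(n),[0]} = 0$) yields $X_1^{(n),[n+1]} \geq \tfrac{n+1}{n} > 1$, so the event $\{\mathcal{M}_n = n+1\}$ is empty. Hence $\{0\leq\gamma_n\leq 1\} = \bigsqcup_{m=1}^{n}\{\mathcal{M}_n = m\}$ with, on $\{\mathcal{M}_n = m\}$, $\gamma_n = X_1^{(n),[m]} = X_2^{(n),[m-1]} \in [0,1]$ and $X_2^{(n),[m]} = \Phi_{n,2}(X_2^{(n),[m-1]}, Y^{[m]}) > 1$, while $X^{(n),[m]} = \Phi_n(X_2^{(n),[m-1]}, Y^{[m]})$.

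Second, fix $B \in \mathcal{B}([0,\infty)^2 \times [\mathcal{C}_0]^2 \times [C([0,\infty),\R)]^2)$ and a Borel set $C \subseteq [0,1]$. I would then show, for each $m\in\{1,\dots,n\}$, that
\begin{equation}
\P\!\left(\{\mathcal{M}_n = m\} \cap \{\gamma_n \in C\} \cap \{X^{(n),[m]} \in B\}\right) = \E\!\left[ \mathbbm{1}^{\Omega}_{\{\mathcal{M}_n = m,\, \gamma_n \in C\}} \, \P_{\gamma_n}(B) \right].
\end{equation}
The key observation is that $X_2^{(n),[m-1]}$ is a Borel function of $\tilde{W}, Z, Y^{[1]},\dots, Y^{[m-1]}$ and is therefore independent of $Y^{[m]}$, which has the same distribution as $Y^{[0]}$. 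Conditioning on $X_2^{(n),[m-1]} = r$ and invoking Fubini's theorem, the factor coming from $Y^{[m]}$ becomes $\P(\Phi_n(r,Y^{[0]}) \in B,\ \Phi_{n,2}(r,Y^{[0]}) > 1)$, which by definition of $\P_r$ equals $\P_r(B) \cdot \P(\Phi_{n,2}(r,Y^{[0]}) > 1)$ whenever $r\in[0,1]$; the second factor is then reabsorbed as $\mathbbm{1}^{\Omega}_{\{X_2^{(n),[m]} > 1\}}$, which on $\{X_1^{(n),[m]} = r \leq 1\}$ is exactly the indicator of $\{\mathcal{M}_n = m\}$ (together with $\{X_1^{(n),[m]} \leq 1\}$).

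Third, summing this identity over $m \in \{1,\dots,n\}$ and using the disjoint decomposition from the first step yields
\begin{equation}
\P\!\left(\{\gamma_n \in C\} \cap \{0\leq\gamma_n\leq 1\} \cap \{X^{(n),[\mathcal{M}_n]} \in B\}\right) = \E\!\left[ \mathbbm{1}^{\Omega}_{\{\gamma_n \in C,\, 0\leq\gamma_n\leq 1\}} \, \P_{\min\{\gamma_n,1\}}(B) \right],
\end{equation}
where $\P_{\min\{\gamma_n,1\}} = \P_{\gamma_n}$ on the event $\{\gamma_n \leq 1\}$. Since sets of the form $\{\gamma_n \in C\}$ generate $\sigma_\Omega(\gamma_n)$ and form a $\pi$-system, a standard monotone class / Dynkin argument extends the identity from $A = \{\gamma_n \in C\}$ to all $A \in \sigma_\Omega(\gamma_n)$, which is the defining property of the conditional probability $\P(X^{(n),[\mathcal{M}_n]} \in B \mid \sigma_\Omega(\gamma_n))$ on the event $\{0\leq\gamma_n\leq 1\}$.

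The main technical obstacle is of a bookkeeping nature: one has to verify the joint measurability of $(r,B) \mapsto \P_r(B)$ (or at least $r \mapsto \P_r(B)$ for fixed $B$), which follows from the Borel measurability of $\Phi_n$ and a routine Fubini argument, and one has to carefully justify the disintegration of the joint law of $(X_2^{(n),[m-1]}, Y^{[m]})$ as a product measure on the appropriate $\sigma$-algebra; both are standard but must be written out with care to ensure that the indicator rewrite used to produce $\mathbbm{1}^{\Omega}_{\{\mathcal{M}_n = m\}}$ is valid.
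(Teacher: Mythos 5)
Your proof follows essentially the same route as the paper: decompose over the value of $\mathcal{M}_n$, use that $X^{(n),[m]} = \Phi_n(X_2^{(n),[m-1]}, Y^{[m]})$ with $Y^{[m]}$ independent of $X_2^{(n),[m-1]}$ and distributed as $Y^{[0]}$, and sum the resulting identity. One minor slip: since $X^{(n),[0]}$ is not given by $\Phi_n$, one only gets $X_2^{(n),[0]} \geq 0$ rather than $\geq 1/n$, so the bound is $X_1^{(n),[n+1]} \geq 1$ rather than $> 1$ and your claim $\mathcal{M}_n \leq n$ is not immediate; however this is irrelevant, since (as in the paper) one may simply sum over all $m \geq 1$.
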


\begin{proof}[Proof of Lemma~\ref{lembasic2}]
Throughout this proof 
let 
\begin{equation}
  \mathbb{B} = [0,\infty)^2 \times [ \mathcal{C}_0 ]^2 \times [ C([0,\infty), \R) ]^2
  ,
\end{equation}
let 
$
  \P_{ \infty } \colon 
  \mathcal{B}( 
    \mathbb{B}  
  ) 
  \to [0,1]
$
be the function which satisfies 
for all 
$
  B \in \mathcal{B}(
    \mathbb{B}
  )
$
that
$
  \P_{ \infty }(B)
  = 0 
$,
let 
$
  \Q_r \colon \mathcal{B}(
    \mathbb{B}
  ) \to [0,1]
$,
$ r \in [0,\infty] $,
be the functions which satisfy 
for all 
$ 
  r \in [0,\infty) 
$, 
$ 
  B \in \mathcal{B}( \mathbb{B} )
$
that
\begin{equation}
  \Q_r( B )
  =
  \P\!\left(
    \left\{ 
      \Phi_n\big( 
        r, Y^{ [0] } 
      \big)
      \in B
    \right\}
    \cap
    \left\{ 
      \Phi_{ n, 2 }\big( 
        r, Y^{ [0] } 
      \big) > 1
    \right\}
  \right)
\end{equation}
and
$
  \Q_{ \infty }( B )
  = 0 
$,
and let $ A \in \mathcal{B}( [0,1] ) $, 
$
  B \in 
  \mathcal{B}( \mathbb{B} )
$.
To establish Lemma~\ref{lembasic2},
we need to prove that 
\begin{equation}
\label{102143}
  \P\Big(
    \big\{ 
      X^{ (n),[\mathcal{M}_n] } \in B
    \big\}
    \cap
    \big\{
      \gamma_n \in A
    \big\}
  \Big)
  =
  \E\!\left[
    \,
    \P_{ \gamma_n }( B )
    \cdot
    \mathbbm{1}^{ [0,\infty] }_{A}( \gamma_n )
    \,
  \right]
  .
\end{equation}
For this we observe that 
\begin{equation}
\label{102141}
\begin{split}
& 
  \P\Big(
    \big\{ 
      X^{ (n),[\mathcal{M}_n] } \in B
    \big\}
    \cap
    \big\{
      \gamma_n \in A
    \big\}
  \Big)
  =
  \E\!\left[
    \mathbbm{1}^{ \mathbb{B} }_{B}( X^{ (n),[\mathcal{M}_n] } )
    \cdot
    \mathbbm{1}^{ [0,\infty] }_{A}(\gamma_n)
  \right]
\\
&
  =
  \sum_{ m = 0 }^{ \infty }
  \E\!\left[
    \mathbbm{1}^{ \mathbb{B} }_{B}( X^{ (n),[\mathcal{M}_n] } )
    \cdot
    \mathbbm{1}^{ [0,\infty] }_{A}( \gamma_n )
    \cdot
    \mathbbm{1}^{ \R }_{ \{ m \} }( \mathcal{M}_n )
  \right]
\\
&
  =
  \sum_{ m = 1 }^{ \infty }
  \E\!\left[
    \mathbbm{1}^{ \mathbb{B} }_{ B }( X^{ (n),[\mathcal{M}_n] } )
    \cdot
    \mathbbm{1}^{ [0,\infty] }_{ A }( \gamma_n )
    \cdot
    \mathbbm{1}^{ \R }_{ \{ m \} }( \mathcal{M}_n )
  \right]
\\
&
  =
  \sum_{ m = 1 }^{ \infty }
  \E\!\left[
    \mathbbm{1}^{ \mathbb{B} }_{B}( X^{ (n), [m] } )
    \cdot
    \mathbbm{1}^{ \R }_{A}( X^{ (n), [m] }_1 )
    \cdot
    \mathbbm{1}^{ \R }_{ [0,1] }( X_1^{ (n), [m] } )
    \cdot
    \mathbbm{1}^{ \R }_{ (1, \infty) }( X_1^{ (n), [m+1] } )
  \right]
  .
\end{split}
\end{equation}
Next we recall that for all $ m \in \N $ it holds 
\begin{enumerate}[(a)]
\item 
\label{itema}
that
\begin{equation}
  X^{ (n), [m] }
  = \Phi_n\big( 
    X_2^{ (n), [m-1] }, Y^{ [m] } 
  \big)
  ,
\end{equation}
\item
\label{itemb}
that
\begin{equation}
  Y^{ [ m ] }( \P )_{
    \mathcal{B}( 
      [ \mathcal{C}_0 ]^3 \times \mathcal{C}_{ 00 }
    )
  } 
  =  
  Y^{ [ 0 ] }( \P )_{
    \mathcal{B}( 
      [ \mathcal{C}_0 ]^3 \times \mathcal{C}_{ 00 }
    )
  } 
  ,
\end{equation}
and
\item
\label{itemc}
that
$ X_2^{ (n), [m-1] } $ and $ Y^{ [m] } $
are independent.
\end{enumerate}
Item~\eqref{itema}
ensures that for all $ m \in \N $ it holds that
\begin{equation}
\begin{split}
&
  \E\!\left[
    \mathbbm{1}^{ \mathbb{B} }_{B}( X^{ (n), [m] } )
    \cdot
    \mathbbm{1}^{ \R }_{ A }( X^{ (n), [m] }_1 )
    \cdot
    \mathbbm{1}^{ \R }_{ [0,1] }( X_1^{ (n), [m] } )
    \cdot
    \mathbbm{1}^{ \R }_{ ( 1, \infty) }( X_1^{ (n), [m+1] } )
  \right]
\\
&
  = 
  \E\!\left[
    \mathbbm{1}^{ \mathbb{B} }_B( X^{ (n), [m] } )
    \cdot
    \mathbbm{1}^{ \R }_A( 
      X^{ (n), [m-1] }_2 
    )
    \cdot
    \mathbbm{1}^{ \R }_{ [0,1] }( X_2^{ (n), [m-1] } )
    \cdot
    \mathbbm{1}^{ \R }_{ (1, \infty) }( X_2^{ (n), [m] } )
  \right]
\\
&
  = 
  \E\Big[
    \mathbbm{1}^{ \R }_A\big( 
      X^{ (n), [m-1] }_2 
    \big)
    \cdot
    \mathbbm{1}^{ \R }_{ [0,1] }( X_2^{ (n), [m-1] } )
\\
&
\quad
\cdot
    \mathbbm{1}^{ \mathbb{B} }_B\big( 
      \Phi_n( 
        X_2^{ (n), [m - 1] }, Y^{ [m] } 
      ) 
    \big)
    \cdot
    \mathbbm{1}^{ \R }_{ (1, \infty) }\big(
      \Phi_{ n, 2 }(
        X_2^{ (n), [m-1] }, Y^{ [m] } 
      )
    \big)
  \Big]
  .
\end{split}
\end{equation}
Items~\eqref{itemb} and \eqref{itemc} hence show that
for all $ m \in \N $ it holds that
\begin{equation}
\begin{split}
&
  \E\!\left[
    \mathbbm{1}^{ \mathbb{B} }_{B}( X^{ (n), [m] } )
    \cdot
    \mathbbm{1}^{ \R }_{ A }( X^{ (n), [m] }_1 )
    \cdot
    \mathbbm{1}^{ \R }_{ [0,1] }( X_1^{ (n), [m] } )
    \cdot
    \mathbbm{1}^{ \R }_{ ( 1, \infty) }( X_1^{ (n), [m+1] } )
  \right]
\\
&
  = 
  \E\!\left[
    \mathbbm{1}^{ \R }_A( X^{ (n), [m-1] }_2 )
    \cdot
    \mathbbm{1}^{ \R }_{ [0,1] }( X_2^{ (n), [m-1] } )
    \cdot
    \Q_{ X_2^{ (n), [m-1] } }( B )
  \right]
\\
&
  =
  \E\Big[
    \mathbbm{1}^{ \R }_A( X^{ (n), [m-1] }_2 )
    \cdot
    \mathbbm{1}^{ \R }_{ [0,1] }( X_2^{ (n), [m-1] } )
    \cdot
    \P_{ X_2^{(n), [m-1] } }(B)
\\
&
    \quad
    \cdot
    \mathbbm{1}^{ \R }_{ (1,\infty) }\big(
      \Phi_{ n, 2 }( X_2^{ (n), [m-1] }, Y^{ [m] } )
    \big)
  \Big]
\\
&
=
  \E\!\left[
    \mathbbm{1}^{ \R }_A( X^{ (n), [m-1] }_2 )
    \cdot
    \P_{ X_2^{ (n), [m-1] } }( B )
    \cdot
    \mathbbm{1}^{ \R }_{ [0,1] }( X_2^{ (n), [m-1] } )
    \cdot
    \mathbbm{1}^{ \R }_{ (1, \infty) }( X_2^{ (n), [m] } )
  \right]
  .
\end{split}
\end{equation}
Item~\eqref{trivcorit1} in 
Lemma~\ref{trivcor} therefore proves
that for all $ m \in \N $ it holds that
\begin{equation}
\label{102142}
\begin{split}
&
  \E\!\left[
    \mathbbm{1}^{ \mathbb{B} }_B( X^{ (n), [m] } )
    \cdot
    \mathbbm{1}^{ \R }_A( X^{ (n), [m] }_1 )
    \cdot
    \mathbbm{1}^{ \R }_{ [0,1] }( X_1^{ (n), [m] } )
    \cdot
    \mathbbm{1}^{ \R }_{ (1, \infty) }( X_1^{ (n), [m+1] } )
  \right]
\\
&
=
  \E\!\left[
    \mathbbm{1}^{ \R }_A( X^{ (n), [m-1] }_2 )
    \cdot
    \P_{ X_2^{ (n), [m - 1] } }( B )
    \cdot
    \mathbbm{1}^{ \R }_{ \{ m \} }( \mathcal{M}_n )
  \right]
\\
&
=
  \E\!\left[
    \mathbbm{1}^{ \R }_A( X^{ (n), [m] }_1 )
    \cdot
    \P_{ X_1^{ (n), [m] } }( B )
    \cdot
    \mathbbm{1}^{ \R }_{ \{ m \} }( \mathcal{M}_n )
  \right]
\\
&
=
  \E\!\left[
    \mathbbm{1}^{ [0,\infty] }_A( \gamma_n )
    \cdot
    \P_{ \gamma_n }( B )
    \cdot
    \mathbbm{1}^{ \R }_{ \{ m \} }( \mathcal{M}_n )
  \right]
  .
\end{split}
\end{equation}
Combining \eqref{102141} with \eqref{102142} yields that
\begin{equation}
\begin{split}
& 
  \E\!\left[
    \mathbbm{1}^{ \mathbb{B} }_B( X^{ (n),[\mathcal{M}_n] } )
    \cdot
    \mathbbm{1}^{ [0,\infty] }_{A}( \gamma_n )
  \right]
  =
  \sum_{ m = 1 }^{ \infty }
  \E\!\left[
    \mathbbm{1}^{ [0,\infty] }_A( \gamma_n )
    \cdot
    \P_{ \gamma_n }( B )
    \cdot
    \mathbbm{1}^{ \R }_{ \{ m \} }( \mathcal{M}_n )
  \right]
\\
&
=
  \E\!\left[
    \mathbbm{1}^{ [0,\infty] }_A( \gamma_n )
    \cdot
    \P_{ \gamma_n }( B )
  \right]
  =
  \E\!\left[
    \P_{ \gamma_n }( B )
    \cdot
    \mathbbm{1}^{ [0,\infty] }_A( \gamma_n )
  \right]
  .
\end{split}
\end{equation}
This establishes \eqref{102143}.
The proof of Lemma~\ref{lembasic2} is thus completed.
\end{proof}

\subsection{Lower bounds for strong
\texorpdfstring{$ L^1 $}{L1}-distances
between the constructed squared Bessel processes}

\subsubsection{A first very rough lower bound for strong
\texorpdfstring{$ L^1 $}{L1}-distances
between the constructed squared Bessel processes}

\begin{lem}
\label{lemdista}
Assume the setting in Section~\ref{setproc},
let $ z \in [0,\infty) $,
and
let $ \tilde{W}^{ \Box } \colon \Omega\to \mathcal{C}_0 $ 
be the Brownian motion given by
$
  \tilde{W}^{ \Box } = G^{ \Box }_1( \tilde{W}^{ \triangle }, B ) 
$.
Then the following two statements are equivalent:
\begin{enumerate}[(i)]
\item\label{lem23item1}
It holds that
\begin{equation}\label{eqshow}
  \E\!\left[
    \big|
      \mathcal{Z}_1^{ z, \delta, b, \tilde{W}^{ \triangle } }
      -
      \mathcal{Z}_1^{ z, \delta, b, \tilde{W}^\Box }
    \big|
  \right]
  = 0.
\end{equation}
\item\label{lem23item2}
There exists a
$ \mathcal{B}( \R ) $/$ \mathcal{B}( \R ) $-measurable function $ f \colon \R \to \R $ which satisfies
\begin{equation}
  \P\!\left(
    \mathcal{Z}_1^{ z, \delta, b, \tilde{W}^{ \triangle } }
    =
    f( \tilde{W}^{ \triangle }_1 )
  \right)
  = 1
  .
\end{equation}
\end{enumerate}
\end{lem}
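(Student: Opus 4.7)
The plan is to reduce both implications to a single structural fact: the random variables $Z^\triangle_1 := \mathcal{Z}_1^{z,\delta,b,\tilde W^\triangle}$ and $Z^\Box_1 := \mathcal{Z}_1^{z,\delta,b,\tilde W^\Box}$ are conditionally independent given $\tilde W^\triangle_1$ and share a common conditional distribution. To prove this, I would first unpack the decompositions on $[0,1]$: the definition of $G^\Box_1$ gives $\tilde W^\Box_t = t \tilde W^\triangle_1 + B_t$ (so in particular $\tilde W^\Box_1 = \tilde W^\triangle_1$ since $B_1 = 0$), while the classical Brownian-bridge decomposition lets me write $\tilde W^\triangle_t = t \tilde W^\triangle_1 + \tilde B_t$ with $\tilde B$ a standard Brownian bridge independent of $\tilde W^\triangle_1$. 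Combined with the independence of $B$ and $\tilde W^\triangle$, this would yield that $\tilde B$ and $B$ are independent standard Brownian bridges, both independent of $\tilde W^\triangle_1$.

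Next I would invoke the universal adaptedness of $\mathcal{Z}$ to obtain a Borel map $\Psi\colon C([0,1],\R)\to\R$ with $\mathcal{Z}_1^{z,\delta,b,w}=\Psi(w|_{[0,1]})$ for all $w\in C([0,\infty),\R)$, so that $Z^\triangle_1=\Psi(\tilde W^\triangle_1\cdot\mathrm{id}+\tilde B)$ and $Z^\Box_1=\Psi(\tilde W^\triangle_1\cdot\mathrm{id}+B)$. Conditional on $\tilde W^\triangle_1=w$, these would be i.i.d.\ with common law $\mu_w$ (the law of $\Psi(w\cdot\mathrm{id}+\beta)$ for a standard Brownian bridge $\beta$). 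From this the implication (ii)$\Rightarrow$(i) is immediate: if $Z^\triangle_1=f(\tilde W^\triangle_1)$ a.s., then $\mu_w=\delta_{f(w)}$ for a.e.\ $w$, whence $Z^\Box_1=f(\tilde W^\triangle_1)=Z^\triangle_1$ a.s. For the converse, from $Z^\triangle_1=Z^\Box_1$ a.s.\ together with the conditional independence I would obtain
\[
0=\E\!\left[(\mu_{\tilde W^\triangle_1}\otimes\mu_{\tilde W^\triangle_1})\!\left(\{(x,y)\in\R^2\colon x\neq y\}\right)\right],
\]
and any probability measure $\mu$ on $\R$ with $(\mu\otimes\mu)(\{x\neq y\})=0$ is a Dirac mass. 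Setting $f(w)$ equal to the atom of $\mu_w$ (and $f(w)=0$ on the null set where $\mu_w$ fails to be Dirac) would then produce the required Borel-measurable $f$.

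The main obstacle I would expect is the direction (i)$\Rightarrow$(ii): the delicate step is to upgrade the a.s.\ equality $Z^\triangle_1=Z^\Box_1$ to the $\sigma(\tilde W^\triangle_1)$-measurability of $Z^\triangle_1$, which requires both the Brownian-bridge decomposition (in order to exhibit $Z^\triangle_1$ and $Z^\Box_1$ as independent copies with the same conditional law $\mu_{\tilde W^\triangle_1}$) and the Dirac-mass characterization of a probability measure equal in distribution to an independent copy of itself. By contrast, (ii)$\Rightarrow$(i) is essentially a consequence of $\tilde W^\Box_1=\tilde W^\triangle_1$ once the symmetric role of $\tilde W^\triangle$ and $\tilde W^\Box$ under this conditioning is set up.
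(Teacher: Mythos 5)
Your proof is correct, but for the harder direction (i)$\Rightarrow$(ii) it takes a genuinely different route than the paper. Both proofs rest on the same three structural facts: $\tilde W^\Box|_{[0,1]}$ is a function of $(\tilde W^\triangle_1,B)$; $\tilde W^\Box_1=\tilde W^\triangle_1$; and $B$ is independent of $\tilde W^\triangle$. The paper's proof of (i)$\Rightarrow$(ii) is a short conditional-expectation computation: since $B\perp\tilde W^\triangle$ one has
\begin{equation*}
\E\big[\mathcal{Z}_1^{z,\delta,b,\tilde W^\triangle}\,\big|\,\sigma_\Omega(\tilde W^\triangle_1)\big]
=\E\big[\mathcal{Z}_1^{z,\delta,b,\tilde W^\triangle}\,\big|\,\sigma_\Omega(\tilde W^\triangle_1,B)\big],
\end{equation*}
then (i) replaces $\tilde W^\triangle$ by $\tilde W^\Box$ inside the conditional expectation, and since $\mathcal{Z}_1^{z,\delta,b,\tilde W^\Box}$ is $\sigma_\Omega(\tilde W^\triangle_1,B)$-measurable one gets $\E[\mathcal{Z}_1^{z,\delta,b,\tilde W^\triangle}\,|\,\sigma_\Omega(\tilde W^\triangle_1)]=\mathcal{Z}_1^{z,\delta,b,\tilde W^\triangle}$ a.s., and the factorization lemma finishes. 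You instead decompose $\tilde W^\triangle$ on $[0,1]$ into $\tilde W^\triangle_1\cdot\mathrm{id}+\tilde B$ with $\tilde B$ a bridge independent of $\tilde W^\triangle_1$, observe that $(\tilde W^\triangle_1,\tilde B,B)$ are jointly independent, thereby exhibiting $\mathcal{Z}_1^{z,\delta,b,\tilde W^\triangle}$ and $\mathcal{Z}_1^{z,\delta,b,\tilde W^\Box}$ as conditionally i.i.d.\ given $\tilde W^\triangle_1$, and then invoke the fact that a distribution equal a.s.\ to an independent copy of itself is Dirac. Your route is more machinery-heavy (regular conditional distributions, measurable selection of the atom of $\mu_w$) but it makes the conditional i.i.d.\ structure fully explicit, and the same structure immediately gives you (ii)$\Rightarrow$(i) as well; the paper's argument is shorter and avoids conditional laws altogether, reaching (ii)$\Rightarrow$(i) by the simpler observation that $\tilde W^\Box$ has the same (Wiener) law as $\tilde W^\triangle$ together with $\tilde W^\Box_1=\tilde W^\triangle_1$. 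One small technical point to keep in mind: the universal adaptedness from the setting in Section~\ref{setproc} a priori yields a universally measurable, not Borel, factorization $\Psi$; you can recover a Borel $\Psi$ by composing the Borel map $\mathcal{Z}_1^{z,\delta,b,(\cdot)}$ with a continuous section of the restriction $C([0,\infty),\R)\to C([0,1],\R)$, or simply work with universally measurable $\Psi$, which is harmless for the law computations you need.
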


\begin{proof}[Proof of Lemma~\ref{lemdista}]
First, we prove that $\big(\eqref{lem23item2}\implies\eqref{lem23item1}\big)$.
Item~\eqref{lem23item2}
ensures
\begin{equation}
  \P\!\left(
    \mathcal{Z}_1^{ z, \delta, b, \tilde{W}^{ \Box } }
    =
    f( \tilde{W}^{ \Box }_1 )
  \right)
  = 1
  .
\end{equation}
Combining item~\eqref{lem23item2} and the fact that $\tilde{W}^{ \triangle }_1=\tilde{W}^\Box_1$ hence ensures that
\begin{equation}
\begin{split}
\E\!
\left[
|
\mathcal{Z}_1^{z,\delta,b,\tilde{W}^{ \triangle }}
-\mathcal{Z}_1^{z,\delta,b,\tilde{W}^\Box}
|
\right]
&=\E\!
\left[
|
f(\tilde{W}^{ \triangle }_1)
-f(\tilde{W}^\Box_1)
|
\right]\\
&=\E\!
\left[
|
f(\tilde{W}^{ \triangle }_1)
-f(\tilde{W}^{ \triangle }_1)
|
\right]
=0.
\end{split}
\end{equation}
This establishes that
$\big(\eqref{lem23item2}\implies\eqref{lem23item1}\big)$.
Next we prove that $\big(\eqref{lem23item1}\implies\eqref{lem23item2}\big)$.
Combining the fact that $\tilde{W}^{ \triangle }$ and $B$ are independent and
item~\eqref{lem23item1}
assures that it holds $\P$-a.s.\ that
\begin{equation}
\begin{split}
\E \! \left[
\mathcal{Z}_1^{z,\delta,b,\tilde{W}^{ \triangle }}
\ \big|\
\sigma_\Omega(\tilde{W}^{ \triangle }_1)
\right]
&=\E \! \left[
\mathcal{Z}_1^{z,\delta,b,\tilde{W}^{ \triangle }}
\ \big|\
\sigma_\Omega(\tilde{W}^{ \triangle }_1, B)
\right]\\
&=\E \! \left[
\mathcal{Z}_1^{z,\delta,b,\tilde{W}^\Box}
\ \big|\
\sigma_\Omega(\tilde{W}^{ \triangle }_1, B)
\right]\\
&=\mathcal{Z}_1^{z,\delta,b,\tilde{W}^\Box}\\
&=\mathcal{Z}_1^{z,\delta,b,\tilde{W}^{ \triangle }}.
\end{split}
\end{equation}
This together with the factorization lemma for conditional expectations establishes item~\eqref{lem23item2}.
This demonstrates that
$\big(\eqref{lem23item1}\implies\eqref{lem23item2}\big)$.
The proof of Lemma~\ref{lemdista} is thus completed.
\end{proof}

\begin{lem}
\label{lemdistb}
Assume the setting in Section~\ref{setproc},
let $ z \in [0,\infty) $,
and
let $ \tilde{W}^{ \Box } \colon \Omega\to \mathcal{C}_0 $ 
be the Brownian motion given by
$
  \tilde{W}^{ \Box } = G^{ \Box }_1( \tilde{W}^{ \triangle }, B ) 
$.
Then 
\begin{equation}
\label{eq1}
  \E\!\left[
    \big|
      \mathcal{Z}_1^{ z, \delta, b, \tilde{W}^{ \triangle } }
      -
      \mathcal{Z}_1^{ z, \delta, b, \tilde{W}^\Box }
    \big|
  \right]
  > 0.
\end{equation}
\end{lem}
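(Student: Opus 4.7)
The plan is to combine Lemma~\ref{lemdista} with a smoothness-based contradiction. By Lemma~\ref{lemdista}, inequality \eqref{eq1} is equivalent to the non-existence of a Borel-measurable function $f \colon \R \to \R$ satisfying $\P(\mathcal{Z}_1^{ z, \delta, b, \tilde{W}^{ \triangle } } = f(\tilde{W}^{ \triangle }_1)) = 1$. I therefore assume for contradiction that such an $f$ exists; Lemma~\ref{lemboundedmom} then ensures that $f(\tilde{W}^{ \triangle }_1) = \mathcal{Z}_1^{ z, \delta, b, \tilde{W}^{ \triangle } } \in L^2(\P)$.

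The first main step is to examine the $L^2$-martingale
\begin{equation*}
  M_t = \E\!\left[ \mathcal{Z}_1^{ z, \delta, b, \tilde{W}^{ \triangle } } \,\big|\, \sigma_{ \Omega }\big( \tilde{W}^{ \triangle }_s \colon s \in [0,t]\big) \right],
  \qquad t \in [0,1],
\end{equation*}
through two representations. On one hand, the Markov property of the squared Bessel process combined with Lemma~\ref{expcomp} gives $M_t = e^{-b(1-t)}\,\mathcal{Z}_t^{ z, \delta, b, \tilde{W}^{ \triangle } } + c(t)$ where $c(t) = \delta \int_0^{1-t} e^{-bs}\,\mathrm{d}s$. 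On the other hand, the Markov property of Brownian motion together with the hypothesis $\mathcal{Z}_1^{ z, \delta, b, \tilde{W}^{ \triangle } } = f(\tilde{W}^{ \triangle }_1)$ gives $M_t = h(t, \tilde{W}^{ \triangle }_t)$ with
\begin{equation*}
  h(t,x) = \int_{\R} f(y) \cdot ( 2 \pi ( 1 - t ) )^{ -1/2 } \, e^{ -(y-x)^2 / ( 2 ( 1 - t ) ) }\,\mathrm{d}y,
\end{equation*}
which is $C^{ \infty }$ on $(0,1) \times \R$ by a standard differentiation-under-the-integral argument exploiting $f(\tilde{W}^{ \triangle }_1) \in L^2$. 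Equating the two expressions for $M_t$ yields $\mathcal{Z}_t^{ z, \delta, b, \tilde{W}^{ \triangle } } = F(t, \tilde{W}^{ \triangle }_t)$ almost surely for every $t \in [0,1]$, where $F(t,x) := e^{b(1-t)}(h(t,x) - c(t))$ is smooth and nonnegative on $(0,1) \times \R$.

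The second step is to match the two Itô representations of $M$. A direct computation using the SDE \eqref{BesselSDE} gives $\mathrm{d}M_t = 2 e^{-b(1-t)} \sqrt{\mathcal{Z}_t^{ z, \delta, b, \tilde{W}^{ \triangle } }}\,\mathrm{d}\tilde{W}^{ \triangle }_t$, while Itô's formula applied to $h(t, \tilde{W}^{ \triangle }_t)$ (whose drift part vanishes by the martingale property, so $h$ solves the backward heat equation) gives $\mathrm{d}M_t = \partial_x h(t, \tilde{W}^{ \triangle }_t)\,\mathrm{d}\tilde{W}^{ \triangle }_t$. Uniqueness of the stochastic-integral representation of the $L^2$-martingale $M$, continuity of both integrands in $(t,x)$, and the fact that for $t > 0$ the random variable $\tilde{W}^{ \triangle }_t$ charges every non-empty open subset of $\R$ together force the pointwise identity
\begin{equation*}
  \partial_x F(t,x) = 2 \sqrt{ F(t,x) }
  \qquad \text{for all } (t,x) \in (0,1) \times \R.
\end{equation*}

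The final step derives the contradiction from this ODE. For each fixed $t \in (0,1)$, $F(t,\cdot)$ is a $C^{ \infty }$, nonnegative solution of $u' = 2\sqrt{u}$ on $\R$; solving $( \sqrt{u} )' = 1$ on the open set $\{u > 0\}$ and piecing together with the zero set shows that any such solution has the form $u(x) = (x - a)_+^2$, which fails to be $C^2$ at $x = a$. Hence the only $C^{ \infty }$ option is $F \equiv 0$, which gives $\E[\mathcal{Z}_t^{ z, \delta, b, \tilde{W}^{ \triangle } }] = \E[F(t, \tilde{W}^{ \triangle }_t)] = 0$ for every $t \in (0,1)$. This contradicts Lemma~\ref{expcomp}, which produces $\E[\mathcal{Z}_t^{ z, \delta, b, \tilde{W}^{ \triangle } }] \geq \delta \int_0^t e^{-bs}\,\mathrm{d}s > 0$ for every $t > 0$ (using $\delta > 0$), and thereby establishes \eqref{eq1}. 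The main technical points to verify carefully are the $C^{ \infty }$-regularity of $h$ and the validity of the Itô integrand matching on $[0,1)$; both follow from the $L^2$-integrability ensured by Lemma~\ref{lemboundedmom}.
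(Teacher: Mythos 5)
Your proof is correct, but it takes a genuinely different route from the paper's. The paper disposes of Lemma~\ref{lemdistb} very quickly: it applies Lemma~\ref{lemdista} to reduce the claim to the statement that $\mathcal{Z}_1^{z,\delta,b,\tilde{W}^{\triangle}}$ is not a measurable function of $\tilde{W}^{\triangle}_1$ alone, and then cites this non-measurability as a known result from the external references Hefter \& Herzwurm (Equation~(13)) for $(\delta,b)=(1,0)$ and Hefter, Herzwurm, \& M\"uller-Gronbach (Theorem~6) for the remaining parameters. You instead supply a self-contained proof of that non-measurability: assuming $\mathcal{Z}_1=f(\tilde{W}^{\triangle}_1)$, you compute the martingale $M_t=\E[\mathcal{Z}_1\,|\,\mathcal{F}_t]$ two ways (once via the Markov property of $\mathcal{Z}$ together with Lemma~\ref{expcomp}, once via heat-kernel smoothing of $f$), deduce $\mathcal{Z}_t=F(t,\tilde{W}^{\triangle}_t)$ for a smooth $F$, match the two It\^o integrands to obtain $\partial_x F(t,\cdot)=2\sqrt{F(t,\cdot)}$, and observe that the only smooth nonnegative solution of this ODE is the zero function, contradicting $\E[\mathcal{Z}_t]=\delta\int_0^t e^{-bs}\,\mathrm{d}s>0$. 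Each step checks out: the $L^2$-bound from Lemma~\ref{lemboundedmom} does give the needed integrability for Gaussian smoothing; nonnegativity of $F(t,\cdot)$ follows from $\mathcal{Z}_t\ge 0$, the full support of $\tilde{W}^{\triangle}_t$, and continuity; the a.e.\ identity of integrands passes to a pointwise identity by the same continuity/full-support argument; and $(x-a)_+^2$ indeed fails to be $C^2$ at $x=a$. What the paper's approach buys is brevity and a precise bibliographic pointer to where the non-measurability has been established under the exact parameter ranges; what your approach buys is self-containedness and, incidentally, an argument that works for any $\delta\in(0,\infty)$, not just $\delta\in(0,2)$. A polished write-up should spell out the dominated-convergence estimates justifying the $C^\infty$-regularity of $h$ on $(0,1)\times\R$ and the a.e.-to-everywhere passage, but these are routine.
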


\begin{proof}[Proof of Lemma~\ref{lemdistb}]
In the case
$ (\delta,b) = (1,0) $
inequality
\eqref{eq1}
follows
from
Lemma~\ref{lemdista}
and, e.g.,
Hefter \& Herzwurm~\cite[Equation~(13)]{2016arXiv160101455H}
and
in the case $(\delta,b)\in \big((0,2)\times [0,\infty)\big)\setminus \{(1,0)\}$
inequality
\eqref{eq1}
follows
from
Lemma~\ref{lemdista}
and, e.g.,
Hefter, Herzwurm, \& M\"{u}ller-Gronbach~\cite[Theorem~6]{2017arXiv171008707H}.
The proof of Lemma~\ref{lemdistb} is thus completed.
\end{proof}

\begin{lem}
\label{lemdist}
Assume the setting in Section~\ref{setproc}
and
for every 
$ r \in [0,1] $,
$ \ast \in \{ \triangle, \Box \} $
let
$
  \mathcal{W}^{ r, \ast }
  \colon \Omega \to \mathcal{C}_0
$
be the Brownian motion given by
$
  \mathcal{W}^{ r, \ast } = F^{ \ast }_1( r, Y^{[0]} )
$.
Then
\begin{equation}
\label{eq111}
  \inf_{
    r \in [0,1]
  }
  \inf_{
    \beta \in [0,b]
  }
  \E\!\left[
    \big|
      \mathcal{Z}_{ r + 1 }^{ 0, \delta, \beta, \mathcal{W}^{ r, \triangle } }
      -
      \mathcal{Z}_{ r + 1 }^{ 0, \delta, \beta, \mathcal{W}^{ r, \Box } }
    \big|
  \right]
  > 0 .
\end{equation}
\end{lem}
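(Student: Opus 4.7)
The plan is to reduce \eqref{eq111} by the strong Markov property to an expression that is pointwise positive via Lemma~\ref{lemdistb}, and then to upgrade pointwise positivity to a uniform bound on the compact parameter region $[0,1] \times [0,b]$ by combining a truncation of a random initial value with joint continuity.

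Fix $r \in [0,1]$, $\beta \in [0,b]$, and $\ast \in \{\triangle,\Box\}$. Unfolding the definitions of $F_1^{\ast}$ and $G_1^{\ast}$ shows that the increment process $s \mapsto \mathcal{W}^{r,\ast}_{r+s} - \mathcal{W}^{r,\ast}_r$ equals $\tilde{W}^{\triangle}$ on $[0,1]$ if $\ast = \triangle$ and equals $s \mapsto \tilde{W}^{\triangle}_1 \cdot s + B_s = (G_1^{\Box}(\tilde{W}^{\triangle},B))_s$ on $[0,1]$ if $\ast = \Box$; these are precisely the two Brownian motions of Lemma~\ref{lemdistb}. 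Setting $\zeta_{r,\beta} := \mathcal{Z}^{0,\delta,\beta,\tilde{W}^{(1)}}_r$ and applying Lemma~\ref{triv1} at the deterministic stopping time $\tau = r$ gives
\[
  \mathcal{Z}^{0,\delta,\beta,\mathcal{W}^{r,\ast}}_{r+1} = \mathcal{Z}^{\zeta_{r,\beta},\delta,\beta,\widehat{W}^{\ast}}_{1} \qquad \P\text{-a.s.},
\]
where $\widehat{W}^{\triangle} := \tilde{W}^{\triangle}$ and $\widehat{W}^{\Box} := G_1^{\Box}(\tilde{W}^{\triangle},B)$. Because $\zeta_{r,\beta}$ depends only on $\tilde{W}^{(1)}$, it is independent of $(\widehat{W}^{\triangle},\widehat{W}^{\Box})$, and conditioning on $\zeta_{r,\beta}$ yields
\[
  \E\!\left[\big|\mathcal{Z}^{0,\delta,\beta,\mathcal{W}^{r,\triangle}}_{r+1} - \mathcal{Z}^{0,\delta,\beta,\mathcal{W}^{r,\Box}}_{r+1}\big|\right] = \E[G(\zeta_{r,\beta},\beta)],
\]
where $G(z,\beta) := \E[|\mathcal{Z}^{z,\delta,\beta,\widehat{W}^{\triangle}}_1 - \mathcal{Z}^{z,\delta,\beta,\widehat{W}^{\Box}}_1|]$. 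Lemma~\ref{lemdistb} then gives $G(z,\beta) > 0$ for every $(z,\beta) \in [0,\infty) \times [0,b]$.

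To convert this pointwise positivity into a uniform positive lower bound, I would combine a truncation with joint continuity of $G$. By Markov's inequality and Lemma~\ref{expcomp},
\[
  \P(\zeta_{r,\beta} \leq M) \geq 1 - \tfrac{\E[\zeta_{r,\beta}]}{M} \geq 1 - \tfrac{\delta r}{M} \geq 1 - \tfrac{\delta}{M}
\]
for every $M \in (0,\infty)$, so choosing $M = 2\delta$ makes the left-hand side at least $1/2$ uniformly in $(r,\beta)$. Continuity of $G$ in $z$ is immediate from Lemma~\ref{lipsc}, which yields the uniform-in-$\beta$ Lipschitz bound $|G(z_1,\beta) - G(z_2,\beta)| \leq 2 e^{-\beta}|z_1 - z_2|$. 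Continuity in $\beta$ can be extracted from a standard stability-in-parameter estimate for the squared Bessel SDE, for instance via the time-change identity $\mathcal{Z}^{z,\delta,\beta,V}_t = e^{-\beta t}\,\mathcal{Z}^{z,\delta,0,U}_{(e^{\beta t}-1)/\beta}$ for an appropriate Brownian motion $U$ built from $V$, together with the a.s.\ sample-path continuity of squared Bessel processes and the moment bound from Lemma~\ref{lemboundedmom}. Joint continuity of $G$ and compactness of $[0,M] \times [0,b]$ then give $c := \min_{(z,\beta) \in [0,M]\times[0,b]} G(z,\beta) > 0$, whence
\[
  \E[G(\zeta_{r,\beta},\beta)] \geq c \cdot \P(\zeta_{r,\beta} \leq M) \geq \tfrac{c}{2},
\]
uniformly in $(r,\beta) \in [0,1]\times[0,b]$, which establishes \eqref{eq111}.

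The main obstacle will be continuity of $G$ in the drift parameter $\beta$: because the diffusion coefficient $x \mapsto 2\sqrt{|x|}$ is only $\tfrac12$-H\"older, classical Lipschitz-type SDE stability theorems do not apply directly. Nevertheless, the strong pathwise uniqueness of squared Bessel SDEs (Yamada--Watanabe) combined with the time-change representation above should make this step technical but routine.
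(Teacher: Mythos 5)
Your overall strategy matches the paper's: reduce to a function of the initial state via the Markov decomposition, invoke Lemma~\ref{lemdistb} for pointwise positivity, and upgrade to a uniform bound by continuity and compactness. The truncation step is a reasonable variant (the paper avoids it by working directly with the map $(r,\beta)\mapsto g(r,\beta)$ on the compact rectangle $[0,1]\times[0,b]$ rather than with $(z,\beta)\mapsto G(z,\beta)$ on the noncompact strip $[0,\infty)\times[0,b]$, so no truncation is needed).

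However, there is a genuine gap at precisely the point you flag as ``the main obstacle'': continuity of $G$ in $\beta$. The time-change identity you propose does not resolve it, because the inner Brownian motion $U=U_\beta$ built from $V$ depends on $\beta$ as well. After the change of variables you would be comparing $e^{-\beta_1}\mathcal{Z}^{z,\delta,0,U_{\beta_1}}_{(e^{\beta_1}-1)/\beta_1}$ with $e^{-\beta_2}\mathcal{Z}^{z,\delta,0,U_{\beta_2}}_{(e^{\beta_2}-1)/\beta_2}$, which still requires continuity of the squared Bessel solution in the driving noise, i.e.\ exactly the $L^1$-stability estimate that is unavailable through classical Lipschitz SDE theory for the $\nicefrac12$-H\"older diffusion coefficient. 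Yamada--Watanabe yields pathwise uniqueness but not a quantitative modulus of continuity in the drift parameter, so ``technical but routine'' is an overstatement; this is the step on which the argument would stall.

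The paper sidesteps this entirely via the comparison principle (Lemma~\ref{lemcomparison}): for $\beta_1\ge\beta_2$ the processes $\mathcal{Z}^{z,\delta,\beta_1,V}$ and $\mathcal{Z}^{z,\delta,\beta_2,V}$ (driven by the \emph{same} $V$) are pathwise ordered, so
\begin{equation*}
  \E\big[\big|\mathcal{Z}_t^{z,\delta,\beta_1,V}-\mathcal{Z}_t^{z,\delta,\beta_2,V}\big|\big]
  =\big|\E[\mathcal{Z}_t^{z,\delta,\beta_1,V}]-\E[\mathcal{Z}_t^{z,\delta,\beta_2,V}]\big|,
\end{equation*}
and the explicit first-moment formula of Lemma~\ref{expcomp} then gives continuity in $\beta$ directly, with no stability-in-noise estimate required. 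Replacing your time-change argument by this monotonicity argument (and similarly handling the $r$-dependence via Lemma~\ref{lipsc} together with $L^1$-continuity of $t\mapsto\mathcal{Z}_t^{0,\delta,\beta,\tilde W^{(1)}}$, as the paper does) would close the gap.
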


\begin{proof}[Proof of Lemma~\ref{lemdist}]
Throughout this proof
let $U^\triangle,U^\Box, V\colon [0,1]\times[0,\infty)\times\Omega\to\R$
be the random fields
which satisfy for all
$\ast\in\{\triangle, \Box\}$,
$r\in [0,1]$,
$\beta \in [0,\infty)$
that
\begin{equation}
U^\ast(r,\beta)
=\mathcal{Z}_{r+1}^{0,\delta,\beta,\mathcal{W}^{ r, \ast }}
\end{equation}
and
\begin{equation}
V(r, \beta)
=\mathcal{Z}_{r}^{0,\delta,\beta,\tilde{W}^{(1)}},
\end{equation}
let $g\colon [0,1]\times[0,\infty)\to\R$ be the function which satisfies for all $r\in [0,1]$, $\beta \in [0,\infty)$ that
\begin{equation}
g(r,\beta)
=\E \! \left[\big|U^{ \triangle }(r,\beta)
-U^\Box(r,\beta)\big| \right],
\end{equation}
and let
$ \tilde{W}^{ \Box } \colon \Omega\to \mathcal{C}_0 $ 
be the Brownian motion given by
$
  \tilde{W}^{ \Box } = G^{ \Box }_1( \tilde{W}^{ \triangle }, B ) 
$.
Observe that for every
$\ast\in\{\triangle, \Box\}$,
$r,t\in [0,1]$
it holds that
\begin{equation}
\left(\mathcal{W}^{r,\ast}\right)_{t+r}-\left(\mathcal{W}^{r,\ast}\right)_{r}
=\tilde{W}^\ast_t.
\end{equation}
Moreover, note that for every
$\ast\in\{\triangle, \Box\}$,
$r\in [0,1]$,
$t\in [0,r]$
it holds that
\begin{equation}
\left(\mathcal{W}^{r,\ast}\right)_{t}
=\tilde{W}^{(1)}_t.
\end{equation}
Hence, we obtain that for every
$\ast\in\{\triangle, \Box\}$,
$r\in [0,1]$,
$\beta \in [0,\infty)$
it holds $\P$-a.s.\ that
\begin{equation}\label{eq3456}
U^\ast(r,\beta)
=\mathcal{Z}_{r+1}^{0,\delta,\beta,\mathcal{W}^{r,\ast}}
=\mathcal{Z}_1^{\mathcal{Z}_r^{0,\delta,\beta,\mathcal{W}^{r,\ast}},\delta,\beta,\tilde{W}^\ast}
=\mathcal{Z}_1^{V(r,\beta),\delta,\beta,\tilde{W}^\ast}.
\end{equation}
Combining
Lemma~\ref{lemdistb},
\eqref{eq3456},
and
the fact that
for every $r\in [0,1]$, $\beta\in [0,\infty)$ it holds that $V(r,\beta)$ and $(\tilde{W}^{ \triangle },\tilde{W}^\Box)$
are independent yields that for every $r\in [0,1]$, $\beta\in [0,\infty)$ it holds that
\begin{equation}\label{pos}
\begin{split}
g(r,\beta)
&=\E \! \left[\big|U^{ \triangle }(r,\beta)
-U^\Box(r,\beta)\big| \right]
\\
&=
\E\!
\left[
\big|
\mathcal{Z}_1^{V(r,\beta),\delta,\beta,\tilde{W}^{ \triangle }}
-\mathcal{Z}_1^{V(r,\beta),\delta,\beta,\tilde{W}^\Box}
\big|
\right]>0.
\end{split}
\end{equation}
In the next step we combine
Lemma~\ref{lipsc},
\eqref{eq3456},
and the fact that for every
$\ast\in\{\triangle, \Box\}$,
$r,t\in [0,1]$,
$\beta\in [0,\infty)$
it holds that
$\tilde{W}^\ast$ and $(V(r,\beta), V(t,\beta))$ are independent
to obtain that for every
$\ast\in\{\triangle, \Box\}$,
$r,t\in [0,1]$,
$\beta\in [0,\infty)$
it holds that
\begin{equation}\label{a}
\begin{split}
\E \! \left[ \big| U^\ast(r,\beta)
-U^\ast(t,\beta) \big|\right]
&=\E \! \left[ \big|
\mathcal{Z}_1^{V(r,\beta),\delta,\beta,\tilde{W}^\ast}
-
\mathcal{Z}_1^{V(t,\beta),\delta,\beta,\tilde{W}^\ast}
\big|\right]
\\
&=
e^{-\beta}
\cdot
\E\!
\left[
\left|
V(r,\beta)
-V(t,\beta)
\right|
\right]
\\
&=
e^{-\beta}\cdot \E \! \left[ \big|
\mathcal{Z}_{r}^{0,\delta,\beta, \tilde{W}^{(1)}}
-\mathcal{Z}_{t}^{0,\delta,\beta, \tilde{W}^{(1)}}
\big| \right].
\end{split}
\end{equation}
In addition, we note that Lemma~\ref{lemcomparison} ensures that for all
$\ast\in\{\triangle, \Box\}$,
$t\in [0,1]$,
$\beta_1,\beta_2\in [0,\infty)$
it holds that
\begin{equation}\label{b}
\E \big[ | U^\ast(t,\beta_1)
-U^\ast(t,\beta_2) | \big]
=\big|
\E 
\big[U^\ast(t,\beta_1) \big]
-
\E  \big[U^\ast(t,\beta_2) \big]
\big|.
\end{equation}
The
triangle inequality
and
\eqref{a}
thereby
imply that for all
$\ast\in\{\triangle, \Box\}$,
$r,t\in [0,1]$,
$\beta_1,\beta_2\in [0,\infty)$
it holds that
\begin{equation}\label{a1}
\begin{split}
\E& \big[ |U^\ast(r,\beta_1)
-U^\ast(t,\beta_2)|\big]\\
&\leq
\E \big[ | U^\ast(r,\beta_1)
-U^\ast(t,\beta_1) |\big]
+
\E \big[ | U^\ast(t,\beta_1)
-U^\ast(t,\beta_2) | \big]\\
&=e^{-\beta_1}\cdot \E \! \left[ \big|
\mathcal{Z}_{r}^{0,\delta,\beta_1, \tilde{W}^{(1)}}
-\mathcal{Z}_{t}^{0,\delta,\beta_1, \tilde{W}^{(1)}}
\big| \right]
+\left|
\E  \big[
U^\ast(t,\beta_1) \big]
-
\E  \big[U^\ast(t,\beta_2) \big]
\right|\\
&\leq
\E \! \left[\big|
\mathcal{Z}_{r}^{0,\delta,\beta_1, \tilde{W}^{(1)}}
-\mathcal{Z}_{t}^{0,\delta,\beta_1, \tilde{W}^{(1)}}
\big| \right]
+\left|
\E 
\big[U^\ast(t,\beta_1) \big]
-
\E  \big[U^\ast(t,\beta_2) \big]
\right|.
\end{split}
\end{equation}
Next observe that for all $r\in [0,1]$, $\beta\in [0,\infty)$ it holds that
\begin{equation}\label{a2}
\limsup_{\substack{t\to r,\\t\in [0,1]}}
\E\!
\left[
\big|
\mathcal{Z}_{t}^{0,\delta,\beta,\tilde{W}^{(1)}}
-\mathcal{Z}_{r}^{0,\delta,\beta,\tilde{W}^{(1)}}
\big|
\right]
=0
\end{equation}
(cf., e.g.,
Mao~\cite[Theorem~2.4.3]{MR1475218}).
Moreover, we note that Lemma~\ref{expcomp} ensures that for all
$\ast\in\{\triangle, \Box\}$,
$r\in [0,1]$,
$\beta_1\in [0,\infty)$
it holds that
\begin{equation}\label{a3}
\limsup_{\substack{(t,\beta_2)\to (r,\beta_1),\\(t,\beta_2)\in [0,1]\times[0,\infty)}}
\big|
\E [U^\ast(t,\beta_2)]
-\E [U^\ast(r,\beta_1)]
\big|
=0.
\end{equation}
Combining
\eqref{a1},
\eqref{a2},
and \eqref{a3}
yields
that for all
$\ast\in\{\triangle, \Box\}$,
$r\in [0,1]$,
$\beta_1\in [0,\infty)$
it holds that
\begin{equation}
\limsup_{\substack{(t,\beta_2)\to (r,\beta_1),\\(t,\beta_2)\in [0,1]\times[0,\infty)}}
\E\!
\left[
\left|
U^\ast(t,\beta_2)
-
U^\ast(r,\beta_1)
\right|
\right]
=0.
\end{equation}
This proves that $ g $ is continuous.
Combining this and \eqref{pos} establishes \eqref{eq111}.
The proof of Lemma~\ref{lemdist} is thus completed.
\end{proof}

\subsubsection{On conditional \texorpdfstring{$ L^1 $}{L1}-distances 
between the constructed squared Bessel processes}

\begin{lem}
\label{lem1100}
Assume the setting in Section~\ref{setproc},
let 
$ n \in \N \cap [5,\infty) $,
$ t_0 \in [0, \nicefrac{ 1 }{ 2 } ] $,
$ t_1 = \mathfrak{T}^1_n(t_0) \in [ 0, \nicefrac{ 1 }{ n } ) $,
$ t_2 = \mathfrak{T}^2_n(t_0) \in [ \nicefrac{ 1 }{ n } , \nicefrac{ 2 }{ n } ) $,
$ t_3 = 1 - t_0 \in [ \nicefrac{ 1 }{ 2 } , 1 ] $,
and for every 
$ \ast \in \{ \triangle, \Box \} $
let
$ 
  W^{ \ast }
  \colon \Omega \to \mathcal{C}_0
$
be the Brownian motion given by 
$
  W^{ \ast } = F^{ \ast }_n( t_1, Y^{ [0] } )
$.
Then 
\begin{enumerate}[(i)]
\item 
it holds that
$
  t_1 < t_2 < t_3
$,
\item 
it holds that
$
  \P\big(
    \inf\nolimits_{ 
      s \in [ t_2, t_3 ]
    }
    \max_{\ast \in \{ \triangle, \Box \}}
      \mathcal{Z}_s^{ 0, \delta, b, W^{ \ast } } 
     > 0
  \big) > 0
$,
and
\item 
it holds that
\begin{multline}
\label{claim1054}
  \E\bigg[
    \big|
      \mathcal{Z}_{ t_3 }^{ 0, \delta, b, W^{ \triangle } }
      -
      \mathcal{Z}_{ t_3 }^{ 0, \delta, b, W^{ \Box } }
    \big|
    \,
    \Big|
    \Big\{
      \inf\limits_{ 
        s \in [ t_2, t_3 ]
      }
      \max_{\ast \in \{ \triangle, \Box \}}        \mathcal{Z}_s^{ 0, \delta, b, W^{ \ast } } > 0
    \Big\}
  \bigg]
\\
  \geq
  \frac{
      \E\Big[
        \big|
          \mathcal{Z}_{ 
            t_1 n + 1 
          }^{ 
            0, \delta, b / n, F_1^{ \triangle }( t_1 n , Y^{ [0] } ) 
          }
          -
          \mathcal{Z}_{ t_1 n + 1 }^{ 
            0, \delta, b / n, F_1^{ \Box }( t_1 n , Y^{ [0] } ) 
          }
        \big|
      \Big]
  }{
    2 \, n \, 
    e^{ b ( t_3 - t_2 ) }
    \,
    \P\!\left(
      \forall \, s \in [ \nicefrac{ 2 }{ n } , \nicefrac{ 1 }{ 2 } ] 
      \colon 
      \mathcal{Z}_s^{ 0, \delta, b, W^{ \triangle } } > 0
    \right)
  }
  .
\end{multline}
\end{enumerate}
\end{lem}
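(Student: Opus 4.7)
Part (i) is a numerical check: since $n\geq 5$ one has $t_2<\nicefrac{2}{n}\leq\nicefrac{2}{5}<\nicefrac{1}{2}\leq t_3$ and $t_1<\nicefrac{1}{n}\leq t_2$ directly from the definitions of $\mathfrak{T}^1_n$ and $\mathfrak{T}^2_n$. For part (ii), the event in question contains $\{\inf_{s\in[t_2,t_3]}\mathcal{Z}^{0,\delta,b,W^\triangle}_s>0\}$, which has strictly positive probability because $\mathcal{Z}^{0,\delta,b,W^\triangle}_{t_2}$ is a.s.\ positive (its law at the positive time $t_2$ is absolutely continuous and supported on $(0,\infty)$) and, conditionally on that initial value being some $z>0$, Lemma~\ref{triv1} identifies the subsequent evolution as a squared Bessel process starting from $z$ driven by $\tilde{W}^{(2)}$, which stays positive on the bounded interval $[0,t_3-t_2]$ with strictly positive probability.

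For part (iii), let $A$ denote the event inside the conditional expectation of \eqref{claim1054}. The proof assembles four ingredients. First, on $A^c$ continuity and compactness produce some $s^\ast\in[t_2,t_3]$ with $\mathcal{Z}^{0,\delta,b,W^\triangle}_{s^\ast}=\mathcal{Z}^{0,\delta,b,W^\Box}_{s^\ast}=0$; since $W^\triangle$ and $W^\Box$ agree on $[t_2,\infty)$ by the piecewise definition of $F^\ast_n$, uniqueness of the squared Bessel SDE (invoked via Lemma~\ref{triv1} at $s^\ast$) forces $\mathcal{Z}^{0,\delta,b,W^\triangle}_s=\mathcal{Z}^{0,\delta,b,W^\Box}_s$ for all $s\geq s^\ast$, so $|\mathcal{Z}^{0,\delta,b,W^\triangle}_{t_3}-\mathcal{Z}^{0,\delta,b,W^\Box}_{t_3}|\mathbbm{1}^{\Omega}_{A^c}=0$ a.s., whence the conditional expectation equals $\E[|\mathcal{Z}^{0,\delta,b,W^\triangle}_{t_3}-\mathcal{Z}^{0,\delta,b,W^\Box}_{t_3}|]/\P(A)$. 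Second, Lemma~\ref{triv1} applied at the deterministic stopping time $t_2$ (so that $\tilde{W}^{(2)}$ is independent of the pair $(\mathcal{Z}^{0,\delta,b,W^\triangle}_{t_2},\mathcal{Z}^{0,\delta,b,W^\Box}_{t_2})$) combined with Lemma~\ref{lipsc} yields
\begin{equation*}
\E\!\left[\big|\mathcal{Z}^{0,\delta,b,W^\triangle}_{t_3}-\mathcal{Z}^{0,\delta,b,W^\Box}_{t_3}\big|\right]=e^{-b(t_3-t_2)}\,\E\!\left[\big|\mathcal{Z}^{0,\delta,b,W^\triangle}_{t_2}-\mathcal{Z}^{0,\delta,b,W^\Box}_{t_2}\big|\right].
\end{equation*}

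Third, the scaling identity Lemma~\ref{lem101} with $c=\nicefrac{1}{n}$ gives $\mathcal{Z}^{0,\delta,b,W^\ast}_{t_2}=\tfrac{1}{n}\,\mathcal{Z}^{0,\delta,b/n,V^\ast}_{nt_1+1}$ for the rescaled Brownian motions $V^\ast_s:=\sqrt{n}\,W^\ast_{s/n}$. A direct unpacking of the piecewise formulas for $F^\ast_n$ and $G^\ast_n$ across the three regimes $[0,t_1]$, $[t_1,t_2]$, $[t_2,\infty)$ shows that $(V^\triangle,V^\Box)$ has the same joint distribution as $(F^\triangle_1(nt_1,Y^{[0]}),F^\Box_1(nt_1,Y^{[0]}))$; the subtle point is that the Brownian bridge $B$ in $G^\Box_n$ survives unchanged in $G^\Box_1$, because the $\sqrt{n}$ spatial scaling exactly cancels the $\nicefrac{1}{\sqrt{n}}$ prefactor in the definition of $G^\Box_n$. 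This yields $\E[|\mathcal{Z}^{0,\delta,b,W^\triangle}_{t_2}-\mathcal{Z}^{0,\delta,b,W^\Box}_{t_2}|]=\tfrac{1}{n}E$, where $E$ denotes the numerator on the right-hand side of \eqref{claim1054}. Fourth, Lemma~\ref{lemcomparison} applied on $[t_2,\infty)$ (on which $W^\triangle=W^\Box$) preserves the ordering of $\mathcal{Z}^{0,\delta,b,W^\triangle}_{t_2}$ and $\mathcal{Z}^{0,\delta,b,W^\Box}_{t_2}$ for all later times, so the running maximum equals throughout $[t_2,\infty)$ whichever of the two processes is larger at $t_2$; combined with the inclusion $[\nicefrac{2}{n},\nicefrac{1}{2}]\subseteq[t_2,t_3]$ from part (i) this gives $A\subseteq\{\inf_{s\in[\nicefrac{2}{n},\nicefrac{1}{2}]}\mathcal{Z}^{0,\delta,b,W^\triangle}_s>0\}\cup\{\inf_{s\in[\nicefrac{2}{n},\nicefrac{1}{2}]}\mathcal{Z}^{0,\delta,b,W^\Box}_s>0\}$, and a union bound using Lemma~\ref{comptriv1} (which ensures $W^\triangle\stackrel{d}{=}W^\Box$, so the two probabilities coincide) produces $\P(A)\leq 2\,\P(\forall\,s\in[\nicefrac{2}{n},\nicefrac{1}{2}]\colon\mathcal{Z}^{0,\delta,b,W^\triangle}_s>0)$.

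Assembling the four ingredients, the factor $e^{-b(t_3-t_2)}$ from step two and the factor $2$ from step four rearrange to yield exactly the right-hand side of \eqref{claim1054}. The main technical obstacle is the scaling step: verifying the joint distributional identity $(V^\triangle,V^\Box)\stackrel{d}{=}(F^\triangle_1(nt_1,Y^{[0]}),F^\Box_1(nt_1,Y^{[0]}))$ requires careful piecewise bookkeeping and, in particular, tracking that the Brownian bridge $B$ rescales consistently between the scale-$\nicefrac{1}{n}$ construction in $G^\Box_n$ and the scale-$1$ construction in $G^\Box_1$.
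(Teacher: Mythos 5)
Your proof is correct and follows essentially the same route as the paper's: the four ingredients you identify (vanishing of the difference off the positivity event via pathwise agreement after a common zero, the $L^1$-Lipschitz reduction from $t_3$ to $t_2$, the $1/n$-scaling to the unit time scale, and the union bound on the positivity event) are precisely the decomposition used in the paper; the only cosmetic difference is that the paper funnels the first and fourth steps through the pre-packaged items of Lemma~\ref{triv10} rather than invoking Lemma~\ref{triv1} and Lemma~\ref{lemcomparison} directly, and the paper makes the scaling step pathwise (introducing a rescaled random vector $\tilde{Y}$ equal in law to $Y^{[0]}$) rather than stating it as a distributional identity for $(V^\triangle,V^\Box)$, but the content is identical.
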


\begin{proof}[Proof of Lemma~\ref{lem1100}]
Throughout this proof for every 
$
  \ast \in \{ \triangle, \Box \} 
$
let 
$
  U^{ \ast } \colon \Omega \to \R
$ 
be the random variable given by 
\begin{equation}
  U^{ \ast } = \mathcal{Z}_{ t_2 }^{ 0, \delta, b, W^{ \ast } }
\end{equation}
and let $\tilde Y\colon \Omega\to[\mathcal{C}_0]^3\times \mathcal{C}_{00}$
be the random variable given by
\begin{equation}
\tilde Y
=\left(
\big(\sqrt{n}\, \tilde{W}^{(1)}_{t/n}\big)_{t\in[0,\infty)},
\big(\sqrt{n}\, \tilde{W}^{\triangle}_{t/n}\big)_{t\in[0,\infty)},
\big(\sqrt{n}\, \tilde{W}^{(2)}_{t/n}\big)_{t\in[0,\infty)},
B
\right).
\end{equation}
Observe that the fact that $ n \geq 5 $
ensures that
\begin{equation}
\label{eq:bounds_for_t23}
  t_2 \leq \nicefrac{ 2 }{ n } \leq \nicefrac{ 2 }{ 5 } < \nicefrac{ 1 }{ 2 } \leq t_3
  .
\end{equation}
This and the fact that
$
  0 \leq t_1 < \nicefrac{ 1 }{ n } \leq t_2
$
prove that
\begin{equation}
\label{eq:t123}
  0 \leq t_1 < t_2 < t_3 .
\end{equation}
Moreover, note that
\begin{equation}
\begin{split}
&
  \P\big(
    \inf\nolimits_{ 
      s \in [ t_2, t_3 ]
    }
    \max\nolimits_{\ast \in \{ \triangle, \Box \}}
      \mathcal{Z}_s^{ 0, \delta, b, W^{ \ast } } > 0
  \big) 
\\
&
  \geq
  \P\big(
    \inf\nolimits_{ 
      s \in [ t_2, t_3 ]
    }
    \mathcal{Z}_s^{ 0, \delta, b, W^{ \triangle } }
    > 0
  \big) 
  =
  \P\big(
    \forall \, 
      s \in [ t_2, t_3 ]
    \colon
    \mathcal{Z}_s^{ 0, \delta, b, W^{ \triangle } }
    > 0
  \big) 
  > 0 
  .
\end{split}
\end{equation}
Next observe that
items~\eqref{triv10item01}--\eqref{triv10item01a} of Lemma~\ref{triv10} 
imply that 
\begin{equation}
\begin{split}
&
  \E\!\left[
    \big|
      \mathcal{Z}_{ t_3 }^{ 0, \delta, b, W^{ \triangle } }
      -
      \mathcal{Z}_{ t_3 }^{ 0, \delta, b, W^{ \Box } }
    \big|
    \,
    \mathbbm{1}^{ \Omega }_{
      \{ 
        \exists \, s \in [ t_2, t_3 ]
        \colon 
        \max\{
          \mathcal{Z}_s^{ 0, \delta, b, W^{ \triangle } }
          ,
          \mathcal{Z}_s^{ 0, \delta, b, W^\Box}
        \} = 0
      \}
    }
  \right]
\\
&
  =
  \E\!\left[
    \big|
      \mathcal{Z}_{ t_3 - t_2 }^{
        U^{ \triangle }, \delta, b, \tilde{W}^{(2)} 
      }
      -
      \mathcal{Z}_{ 
        t_3 - t_2 
      }^{ U^{ \Box }, \delta, b, \tilde{W}^{(2)} 
      }
    \big|
    \,
    \mathbbm{1}^{ \Omega }_{
      \{ 
        \exists \, s \in [0, t_3 - t_2 ]
        \colon
        \mathcal{Z}_s^{ 
          U^{ \triangle }, \delta, b, \tilde{W}^{(2)} 
        }
        =
        \mathcal{Z}_s^{
          U^\Box, \delta, b, \tilde{W}^{(2)}
        } 
        = 0
      \}
    }
  \right]
\\
&
  = 0 .
\end{split}
\end{equation}
Hence, we obtain that
\begin{equation}
\label{eqs102}
\begin{split}
&
  \E\bigg[
    \big|
      \mathcal{Z}_{ t_3 }^{ 0, \delta, b, W^{ \triangle } }
      -
      \mathcal{Z}_{ t_3 }^{ 0, \delta, b, W^{ \Box } }
    \big|
    \,
    \Big|
    \Big\{
      \inf\nolimits_{ 
        s \in [ t_2, t_3 ]
      }
      \max_{\ast \in \{ \triangle, \Box \}}
        \mathcal{Z}_s^{ 0, \delta, b, W^{ \ast } } > 0
    \Big\}
  \bigg]
\\
&
  =
  \frac{
    \E\bigg[
      \big|
        \mathcal{Z}_{ t_3 }^{ 0, \delta, b, W^{ \triangle } }
        -
        \mathcal{Z}_{ t_3 }^{ 0, \delta, b, W^\Box }
      \big|
      \,
      \mathbbm{1}^{ \Omega }_{
        \{ 
          \inf\nolimits_{ 
            s \in [ t_2, t_3 ]
          }
          \max_{\ast \in \{ \triangle, \Box \}}
            \mathcal{Z}_{s}^{0,\delta,b,W^{ \ast }} > 0
        \}
      }
    \bigg]
  }{
    \P\!\left(
      \forall \, s \in [ t_2, t_3]
      \colon 
      \max_{\ast \in \{ \triangle, \Box \}}
        \mathcal{Z}_{s}^{0,\delta,b,W^{ \ast }} > 0
    \right)
  }
\\
&
  =
  \frac{
    \E\!\left[
      \big|
        \mathcal{Z}_{ t_3 }^{ 0, \delta, b, W^{ \triangle } }
        -
        \mathcal{Z}_{ t_3 }^{ 0, \delta, b, W^{ \Box } }
      \big|
    \right]
  }{
    \P\!\left(
      \forall \, s \in [ t_2, t_3 ]
      \colon 
      \max_{\ast \in \{ \triangle, \Box \}}
      \mathcal{Z}_s^{ 0, \delta, b, W^{ \ast }}      > 0
    \right)
  }
  .
\end{split}
\end{equation}
In the next step we note that
items~\eqref{triv10item02} and \eqref{triv10item02a} of Lemma~\ref{triv10} 
and \eqref{eq:bounds_for_t23}
imply that
\begin{equation}
\label{eqs103}
\begin{split}
&\P \! \left(
\forall \, s\in[ t_2, t_3]\colon 
\max_{\ast \in \{ \triangle, \Box \}}
\mathcal{Z}_{s}^{0,\delta,b,W^{ \ast }}>0
\right)
\\
&\leq
2\cdot \P \!\left(
\forall \, s\in[ t_2, t_3]\colon 
\mathcal{Z}_{s}^{0,\delta,b,W^{ \triangle }}>0
\right)
\\
&\leq
2\cdot \P \!\left(
\forall \, s\in[\tfrac{2}{n},\tfrac{1}{2}]\colon 
\mathcal{Z}_{s}^{0,\delta,b,W^{ \triangle }}>0
\right)
  .
\end{split}
\end{equation}
In addition, we observe that
item~\eqref{triv10item01a} of Lemma~\ref{triv10}
ensures that
\begin{align}\label{nr1}
\E \! \left[
\big|
\mathcal{Z}_{ t_3}^{0,\delta,b,W^{ \triangle }}
-\mathcal{Z}_{ t_3}^{0,\delta,b,W^\Box}
\big|
\right]
=
\E \! \left[
\big|
\mathcal{Z}_{ t_3- t_2}^{U^{ \triangle },\delta,b,\tilde{W}^{(2)}}
-\mathcal{Z}_{ t_3- t_2}^{U^\Box,\delta,b,\tilde{W}^{(2)}}
\big|
\right].
\end{align}
Furthermore, we note that
item~\eqref{triv10item01} of Lemma~\ref{triv10} implies that $(U^{ \triangle },U^\Box)$ and $\tilde{W}^{(2)}$
are independent.
Combining this with \eqref{nr1}
and Lemma~\ref{lipsc}
assures that
\begin{align}
\begin{split}
\E \! \left[
\big|
\mathcal{Z}_{ t_3}^{0,\delta,b,W^{ \triangle }}
-\mathcal{Z}_{ t_3}^{0,\delta,b,W^\Box}
\big|
\right]
&=
e^{-b( t_3- t_2)}\cdot
\E \! \left[
\big|
U^{ \triangle }
-U^\Box
\big|
\right]
\\
&=
e^{-b( t_3- t_2)}\cdot
\E \! \left[
\big|
\mathcal{Z}_{ t_2}^{0,\delta,b,W^{ \triangle }}
-\mathcal{Z}_{ t_2}^{0,\delta,b,W^\Box}
\big|
\right].
\end{split}
\end{align}
Combing this with Lemma~\ref{lem101} assures that 
\begin{multline}
\E \! \left[
\big|
\mathcal{Z}_{ t_3}^{0,\delta,b,W^{ \triangle }}
-\mathcal{Z}_{ t_3}^{0,\delta,b,W^\Box}
\big|
\right]
\\
=
e^{-b( t_3- t_2)}\cdot
\tfrac{1}{n}\cdot {\E \! \left[
\Big|
\mathcal{Z}_{n\cdot  t_2}^{0,\delta,b/n,(\sqrt{n}W^{ \triangle }_{t/n})_{t\in [0,\infty)}}
-\mathcal{Z}_{n\cdot  t_2}^{0,\delta,b/n,(\sqrt{n}W^\Box_{t/n})_{t\in [0,\infty)}}
\Big|
\right]}.
\end{multline}
The fact that
\begin{equation}
t_2 n
=t_1 n+1
\end{equation}
and the fact that
for every
$\ast\in\{\triangle, \Box\}$,
$t\in [0,\infty)$
it holds that
\begin{equation}
\sqrt{n}W^\ast_{t/n}
=
\sqrt{n}(F_n^\ast( t_1,Y^{[0]}))_{t/n}
=(F_1^\ast(t_1 n,\tilde Y))_{t}
\end{equation}
therefore demonstrate that
\begin{equation}\label{eqs104}
\begin{split}
&\E \! \left[
\big|
\mathcal{Z}_{ t_3}^{0,\delta,b,W^{ \triangle }}
-\mathcal{Z}_{ t_3}^{0,\delta,b,W^\Box}
\big|
\right]
\\
&=
e^{-b( t_3- t_2)}\cdot
\tfrac{1}{n}\cdot {\E \! \left[
\Big|
\mathcal{Z}_{t_1 n+1}^{0,\delta,b/n,F_1^{ \triangle }(t_1 n,\tilde Y)}
-\mathcal{Z}_{t_1 n+1}^{0,\delta,b/n,F_1^\Box(t_1 n,\tilde Y)}
\Big|
\right]}
\\
&=
e^{-b( t_3- t_2)}\cdot
\tfrac{1}{n}\cdot {\E \! \left[
\Big|
\mathcal{Z}_{t_1 n+1}^{0,\delta,b/n,F_1^{ \triangle }(t_1 n,Y^{[0]})}
-\mathcal{Z}_{t_1 n+1}^{0,\delta,b/n,F_1^\Box(t_1 n,Y^{[0]})}
\Big|
\right]}.
\end{split}
\end{equation}
Combining \eqref{eqs102}, \eqref{eqs103}, and \eqref{eqs104} yields \eqref{claim1054}.
The proof of Lemma~\ref{lem1100} is thus completed.
\end{proof}

\begin{lem}
\label{lem11}
Assume the setting in Section~\ref{setproc}.
Then
\begin{multline}
\label{eq:rate_of_convergence_Zdifference}
  \inf_{ n \in \N \cap [5,\infty) }
  \Bigg(
  n^{ \delta / 2 }
  \cdot
  \inf_{ 
    r \in [ 0, \nicefrac{ 1 }{ 2 } ] 
  }
  \E\bigg[
    \big|
      \mathcal{Z}_{ 1 - r }^{ 
        0, 
        \delta, 
        b, 
        F^{ \triangle }_n( \mathfrak{T}^1_n( r ) , Y^{ [0] } ) 
      }
      -
      \mathcal{Z}_{ 1 - r }^{ 
        0,
        \delta,
        b,
        F^\Box_n( \mathfrak{T}^1_n(r), Y^{ [0] } ) 
      }
    \big|
    \,
    \Big|
    \\
    \Big\{
      \forall \, 
      s \in [ \mathfrak{T}^2_n(r), 1 - r ]
      \colon
      \max\nolimits_{
        \ast \in \{ \triangle, \Box \}
      }
      \mathcal{Z}_s^{ 
        0, \delta, b, F^{ \ast }_n( \mathfrak{T}^1_n(r), Y^{ [0] } ) 
      }
      > 0
    \Big\}
  \bigg]
  \Bigg) > 0
  .
\end{multline}
\end{lem}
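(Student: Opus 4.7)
The plan is to derive \eqref{eq:rate_of_convergence_Zdifference} by combining Lemma~\ref{lem1100} with a uniform lower bound on the numerator supplied by Lemma~\ref{lemdist} and a uniform upper bound on the denominator supplied by Lemma~\ref{hitnotzero}. Concretely, I would fix $n \in \N \cap [5,\infty)$ and $r \in [0,\nicefrac{1}{2}]$, set $t_0 = r$, $t_1 = \mathfrak{T}^1_n(r)$, $t_2 = \mathfrak{T}^2_n(r)$, $t_3 = 1 - r$, and invoke Lemma~\ref{lem1100}. Note that the conditioning event in \eqref{eq:rate_of_convergence_Zdifference} (the maximum of the two processes stays positive on $[\mathfrak{T}^2_n(r), 1-r]$) coincides, by continuity of the sample paths, with the event $\{\inf_{s \in [t_2,t_3]} \max_{\ast} \mathcal{Z}_s^{0,\delta,b,W^\ast} > 0\}$ appearing in Lemma~\ref{lem1100}, which has strictly positive probability by item~(ii) of that lemma.

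For the numerator, I would rewrite the key quantity by substituting $s = t_1 n = n\,\mathfrak{T}^1_n(r) \in [0,1)$ and $\beta = b/n \in [0,b]$ (valid since $n \ge 1$). Under this substitution the numerator becomes exactly the expectation controlled by Lemma~\ref{lemdist}, and so is bounded below by a positive constant $c_1 > 0$ depending only on $\delta$ and $b$, uniformly in $n \ge 5$ and $r \in [0,\nicefrac{1}{2}]$.

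For the denominator, I would observe $t_3 - t_2 \le 1$, so the exponential factor is at most $e^b$. The remaining probability factor is $\P(\forall\, s \in [\nicefrac{2}{n}, \nicefrac{1}{2}]\colon \mathcal{Z}_s^{0,\delta,b,W^{\triangle}} > 0)$, to which I would apply Lemma~\ref{hitnotzero} with $T = \nicefrac{1}{2}$ and $\varepsilon = \nicefrac{2}{n} \in (0,\nicefrac{1}{2}]$ (permissible since $n \ge 5 \ge 4$). This yields a constant $c_2 > 0$ with
\begin{equation*}
  \P\!\left(\forall\, s \in [\tfrac{2}{n}, \tfrac{1}{2}]\colon \mathcal{Z}_s^{0,\delta,b,W^{\triangle}} > 0\right)
  \le c_2 \, (\tfrac{2}{n})^{1 - \delta/2},
\end{equation*}
so the denominator in Lemma~\ref{lem1100} is bounded above by $2 e^{b} c_2 \, 2^{1-\delta/2} \cdot n^{\delta/2}$, again uniformly in $n$ and $r$.

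Combining these estimates, the conditional expectation in \eqref{eq:rate_of_convergence_Zdifference} is bounded below by $c_1 / (2 e^b c_2 \, 2^{1-\delta/2} \cdot n^{\delta/2})$, and multiplying by $n^{\delta/2}$ produces a positive constant independent of $n \ge 5$ and $r \in [0,\nicefrac{1}{2}]$. There is no real obstacle: all the substantive analytic work has been carried out in the prior lemmas, and the only care required here is the verification that the parameters $t_1 n$ and $b/n$ lie in the admissible ranges for Lemma~\ref{lemdist} and that $2/n \le 1/2$ for the application of Lemma~\ref{hitnotzero}, together with matching the two conditioning events via path continuity.
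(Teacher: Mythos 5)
Your proposal is correct and follows exactly the approach indicated in the paper, which derives Lemma~\ref{lem11} as an immediate consequence of Lemma~\ref{hitnotzero}, Lemma~\ref{lemdist}, and Lemma~\ref{lem1100}. You have simply filled in the parameter identifications ($t_1 n \in [0,1)$, $b/n \in [0,b]$, $\varepsilon = \nicefrac{2}{n}$, $T = \nicefrac{1}{2}$) and the elementary bound $e^{b(t_3-t_2)} \le e^b$ that the paper leaves implicit.
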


\begin{proof}[Proof of Lemma~\ref{lem11}]
Inequality~\eqref{eq:rate_of_convergence_Zdifference} is an immediate consequence of Lemma~\ref{hitnotzero},
Lemma~\ref{lemdist}, and Lemma~\ref{lem1100}.
The proof of Lemma~\ref{lem11} is thus completed.
\end{proof}

\begin{lem}
\label{lemmain}
Assume the setting in Section~\ref{setproc},
let $ n \in \N $,
for every 
$ \ast \in \{ \triangle, \Box \} $,
$ r \in [0,1] $
let 
$
  \mathcal{W}^{ *, r }
  \colon \Omega \to \mathcal{C}_0
$
be the Brownian motion given by 
$
  \mathcal{W}^{ *, r }
  = F^{ \ast }_n( \mathfrak{T}^1_n( r ), Y^{[0]} ) 
$,
and 
for every $ r \in [0,1] $ 
let $ E_r \in \R $
be the real number given by
\begin{align}
&
  E_r
  =
\\ &
\nonumber
  \E\bigg[
    \big|
      \mathcal{Z}_{ 1 - r }^{ 0, \delta, b, \mathcal{W}^{ \triangle, r } }
      -
      \mathcal{Z}_{ 1 - r }^{ 0, \delta, b, \mathcal{W}^{ \Box, r } }
    \big|
    \, 
    \Big| 
    \,
    \Big\{
      \forall \, s \in [ \mathfrak{T}^n_2(r) , 1 - r ] 
      \colon
      \max_{
        \ast \in \{ \triangle, \Box \}
      }
      \mathcal{Z}_s^{ 0, \delta, b, \mathcal{W}^{ \ast, r } }
      > 0 
    \Big\}
  \bigg]
  .
\end{align}
Then it holds $\P$-a.s.\ that
\begin{align}
\label{calim1234}
   \E\Big[
      \big|
        \mathcal{Z}_1^{ Z, \delta, b, W^{ (n), \triangle } }
        -
        \mathcal{Z}_1^{ Z, \delta, b, W^{ (n), \Box } }
      \big|
      \,
      \Big|
      \,
      \sigma_\Omega(\gamma_n)
    \Big]
    \mathbbm{1}^{ \Omega }_{
      \{ 
        0 \leq \gamma_n \leq 1
      \} 
    }
    =
    E_{ \min\{\gamma_n,1\} }
    \,
    \mathbbm{1}^{ \Omega }_{
      \{ 
        0 \leq \gamma_n \leq 1
      \} 
    }.
\end{align}
\end{lem}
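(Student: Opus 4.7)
The plan is to first rewrite the random variable $\big|\mathcal{Z}_1^{Z,\delta,b,W^{(n),\triangle}} - \mathcal{Z}_1^{Z,\delta,b,W^{(n),\Box}}\big|$ on the event $\{0 \leq \gamma_n \leq 1\}$ as an explicit Borel-measurable function of $X^{(n),[\mathcal{M}_n]}$, and then to appeal to the conditional distribution formula furnished by Lemma~\ref{lembasic2}, applied with this functional in place of an indicator set.

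For the first task, note that $\{0 \leq \gamma_n \leq 1\} = \{\mathcal{M}_n \geq 1\}$ by the definition of $\gamma_n$. Since every interval $[X_1^{(n),[m]}, X_2^{(n),[m]}]$ has length at least $\nicefrac{1}{n}$ (because $\mathcal{T}_n \geq \nicefrac{1}{n}$ by construction), at most $n$ such intervals can fit inside $[0,1]$, so on $\{\mathcal{M}_n \geq 1\}$ one automatically has $X_2^{(n),[\mathcal{M}_n]} = X_1^{(n),[\mathcal{M}_n+1]} > 1$, placing time $1$ inside the $(\mathcal{M}_n)$-th interval. Combining the defining property of $Z^{(n),\ast}$ with Lemma~\ref{lembasic1} then yields, $\P$-a.s.\ on $\{0 \leq \gamma_n \leq 1\}$,
\begin{equation*}
\mathcal{Z}_1^{Z,\delta,b,W^{(n),\ast}} = Z_1^{(n),\ast} = \big( X^{(n),[\mathcal{M}_n]}_{\mathfrak{v}(\ast)+2} \big)_{1-\gamma_n}, \qquad \ast \in \{\triangle, \Box\}.
\end{equation*}
Accordingly, defining the Borel-measurable function $g \colon [0,\infty)^2 \times [\mathcal{C}_0]^2 \times [C([0,\infty),\R)]^2 \to [0,\infty)$ by $g(v_1, \ldots, v_6) = \mathbbm{1}^{[0,\infty)}_{[0,1]}(v_1) \cdot \big|v_5(1 - v_1) - v_6(1 - v_1)\big|$, the quantity of interest becomes $g(X^{(n),[\mathcal{M}_n]}) \cdot \mathbbm{1}^{\Omega}_{\{0 \leq \gamma_n \leq 1\}}$.

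Conditioning on $\sigma_\Omega(\gamma_n)$ and invoking Lemma~\ref{lembasic2}---extended from indicators to non-negative Borel-measurable functions by a standard monotone class argument---produces the $\P$-a.s.\ identity
\begin{equation*}
\mathbbm{1}^{\Omega}_{\{0 \leq \gamma_n \leq 1\}} \, \E\!\big[ g(X^{(n),[\mathcal{M}_n]}) \,\big|\, \sigma_\Omega(\gamma_n) \big] = \mathbbm{1}^{\Omega}_{\{0 \leq \gamma_n \leq 1\}} \int g(v) \, \P_{\min\{\gamma_n,1\}}(dv).
\end{equation*}
For each $r \in [0,1]$, unwinding the definitions of $\P_r$ and of $\Phi_{n,5}$, $\Phi_{n,6}$, $\mathcal{W}^{\ast,r}$ gives
\begin{equation*}
\int g(v) \, \P_r(dv) = \E\!\left[ \big|\mathcal{Z}_{1-r}^{0,\delta,b,\mathcal{W}^{\triangle,r}} - \mathcal{Z}_{1-r}^{0,\delta,b,\mathcal{W}^{\Box,r}}\big| \,\Big|\, \Phi_{n,2}(r, Y^{[0]}) > 1 \right].
\end{equation*}

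The main obstacle is to identify this right-hand side with $E_r$, i.e.\ to show that $\{\Phi_{n,2}(r, Y^{[0]}) > 1\} = \{\mathcal{T}_n(r, Y^{[0]}) > 1 - r\}$ coincides up to $\P$-null sets with $\{\forall s \in [\mathfrak{T}^2_n(r), 1-r] \colon \max_{\ast} \mathcal{Z}_s^{0,\delta,b,\mathcal{W}^{\ast,r}} > 0\}$. Lemma~\ref{lemhitzero} (using $\delta \in (0,2)$) ensures $\mathcal{S}_n(r, Y^{[0]}) < \infty$ $\P$-a.s., so that $\mathcal{T}_n(r, Y^{[0]}) = \mathcal{S}_n(r, Y^{[0]}) = \max_\ast \tau_\ast$, where $\tau_\ast$ denotes the first hitting time of zero of $\mathcal{Z}^{0,\delta,b,\mathcal{W}^{\ast,r}}$ in $[\mathfrak{T}^2_n(r), \infty)$. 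A priori $\{\max_\ast \tau_\ast > 1 - r\}$ says that at least one of the two processes stays positive throughout the interval, while the $E_r$-event says only that at each $s$ at least one process is positive at $s$; however, items~(iii) and (iv) of Lemma~\ref{triv10} force the process that is larger at $\mathfrak{T}^2_n(r)$ to dominate its counterpart throughout $[\mathfrak{T}^2_n(r), \infty)$, so the order of the two quantifiers is immaterial and the two events coincide almost surely. Combining the three identifications established above yields~\eqref{calim1234}.
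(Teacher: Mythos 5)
Your proof is correct and follows essentially the same route as the paper's: both identify $\big|\mathcal{Z}_1^{Z,\delta,b,W^{(n),\triangle}}-\mathcal{Z}_1^{Z,\delta,b,W^{(n),\Box}}\big|$ with a fixed Borel functional of $X^{(n),[\mathcal{M}_n]}$ (the paper's $G$, your $g$, identical up to the harmless discrepancy $|1-x_1|$ vs.\ $\mathbbm{1}_{[0,1]}(v_1)(1-v_1)$), extend Lemma~\ref{lembasic2} from indicators to non-negative measurable integrands, and identify $\{\Phi_{n,2}(r,Y^{[0]})>1\}$ with the conditioning event appearing in $E_r$ via the a.s.\ comparison/ordering facts from Lemma~\ref{triv10}. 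The only minor stylistic difference is that the paper asserts the representation \eqref{99} unconditionally while you establish it only on $\{0\le\gamma_n\le 1\}$ (which suffices), and you unpack why the ``max of hitting times'' and ``sup-at-each-time'' events agree a.s.\ instead of citing the paper's packaged item~(v) of Lemma~\ref{triv10} directly.
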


\begin{proof}[Proof of Lemma~\ref{lemmain}]
Throughout this proof let $ E_{ \infty } $ be the real number given by
$
  E_{ \infty } = 0 
$,
let 
$
  \mathbb{B} =
  [0,\infty)^2 \times [ \mathcal{C}_0 ]^2 \times [ C( [0,\infty), \R ) ]^2 
$,
for every $ r \in [0,\infty] $
let
$ 
  \P_r \colon 
  \mathcal{B}( 
    \mathbb{B}
  ) 
  \to [0,1]
$
be the probability measures 
which satisfy
for all 
$
  B \in \mathcal{B}( \mathbb{B} )
$ 
that
\begin{equation}
  \P_r( B )
  =
  \begin{cases}
    \P\!\left(
      \Phi_n( r, Y^{ [0] } )
      \in B
      \, \big| \,
      \{ 
        \Phi_{ n, 2 }( r, Y^{ [0] } ) > 1
      \}
    \right)
  &
    \colon
    r < \infty
  \\
  \P\!\left(
    \Phi_n( 0, Y^{ [0] } )
    \in B
    \, \big| \,
    \{
      \Phi_{ n, 2 }( 0, Y^{ [0] } ) 
      > 1
    \}
  \right)
  &
    \colon
    r = \infty
  \end{cases}
  ,
\end{equation}
and let 
$
  G \colon \mathbb{B} \to [0,\infty)
$ 
be the function which satisfies 
for all 
$ 
  x = ( x_1, \dots, x_6 ) 
  \in \mathbb{B}
$ 
that
\begin{equation}
  G( x )
  =
  \left|
    x_5( | 1 - x_1 | )
    -
    x_6( | 1 - x_1 | )
  \right|.
\end{equation}
Observe that for all $ \ast \in \{ \triangle, \Box \} $
it holds that
\begin{equation}
  \P\!\left(
    Z_1^{ (n), \ast }
    =
    ( 
      X_{ \mathfrak{v}( \ast ) + 2 }^{ (n), [ \mathcal{M}_n ] } 
    )_{ 1 - X_1^{ (n),[\mathcal{M}_n] } }
  \right)
  = 1
  .
\end{equation}
Lemma~\ref{lembasic1} hence implies that
for all $ \ast \in \{ \triangle, \Box \} $
it holds that
\begin{equation}
\label{99}
  \P\!\left(
    \mathcal{Z}_1^{ Z, \delta, b, W^{ (n), \ast } } 
    =
    ( X_{ \mathfrak{v}( \ast ) + 2 }^{ (n), [ \mathcal{M}_n ] } )_{ 1 - X_1^{ (n), [\mathcal{M}_n] } }
  \right)
  = 1
  .
\end{equation}
Next observe for every $ r \in [0,1] $ that
it holds that
$  
  \Phi_{ n, 2 }( r, Y^{ [0] } ) > 1
$
if and only if it holds that
$
  \mathcal{T}_n( r, Y^{ [0] } ) > 1 - r
$.
Item~\eqref{triv10item03} of Lemma~\ref{triv10} 
therefore assures that for all 
$ r \in [0,1] $ 
it holds that
\begin{equation}
\label{102}
  \P\!\left(
  \left[
    \Phi_{ n, 2 }( r, Y^{ [0] } ) > 1
  \right]
  \Leftrightarrow
  \Big[
    \forall \, s \in [ \mathfrak{T}^n_2(r) , 1 - r ]
    \colon 
    \max_{
      \ast \in \{ \triangle, \Box \}
    }
    \mathcal{Z}_s^{
      0, \delta, b, \mathcal{W}^{ r, \ast } 
    }
    > 0 
  \Big]
  \right) = 1
  .
\end{equation}
Hence, we obtain that for all $ r \in [0,1] $ it holds that
\begin{equation}\label{asdfg}
\begin{split}
  &\int G( x ) \, \P_r( \mathrm{d}x )\\
&
  =
  \E\Big[
    \,
    G\big( \Phi_n( r, Y^{ [0] } ) \big)
    \, \big| \,
    \big\{ 
      \Phi_{ n, 2 }( r, Y^{ [0] } ) > 1
    \big\}
  \Big]
\\
&=
  \E\bigg[
    \big|
      \mathcal{Z}_{ 1 - r }^{ 0, \delta, b, \mathcal{W}^{ \triangle, r } }
      -
      \mathcal{Z}_{ 1 - r }^{ 0, \delta, b, \mathcal{W}^{ \Box, r } }
    \big|
    \, \Big| \,
    \cap_{
      s \in [ \mathfrak{T}^n_2(r), 1 - r ] 
    }
    \Big\{
      \max_{  
        \ast \in \{ \triangle, \Box \}
      }
      \mathcal{Z}_s^{ 0, \delta, b, \mathcal{W}^{ \ast, r } } 
      > 0
    \Big\}
  \bigg]
\\
&=
  E_r .
\end{split}
\end{equation}
Next observe that \eqref{99} yields that 
\begin{equation}
  \P\!\left(
    \big|
      \mathcal{Z}_1^{ Z, \delta, b, W^{ (n), \triangle } }
      -
      \mathcal{Z}_1^{ Z, \delta, b, W^{ (n), \Box } }
    \big|
    = 
    G( X^{ (n), [ \mathcal{M}_n ] } )
  \right)
  = 1 .
\end{equation}
Hence, we obtain that it holds $ \P $-a.s.\ that
\begin{equation}
\begin{split}
&
  \mathbbm{1}^{ [0,\infty] }_{ [0,1] }( \gamma_n )
  \cdot
  \E\!\left[
    \big|
      \mathcal{Z}_1^{ Z, \delta, b, W^{ (n), \triangle } }
      -
      \mathcal{Z}_1^{ Z, \delta, b, W^{ (n), \Box } }
    \big|
    \,
    \Big|
    \,
   \sigma_\Omega(\gamma_n)
  \right]
\\
&
=
  \mathbbm{1}^{[0,\infty]}_{ [0,1] }( \gamma_n )
\cdot
\E\!
\left[
G(X^{(n),[ \mathcal{M}_n ]})
\, \big|\,
\sigma_\Omega(\gamma_n)
\right]\\
&=
\mathbbm{1}^{ [0,\infty] }_{[0,1]}(\gamma_n)
\cdot
\int G(x)\,\P\big(X^{(n),[ \mathcal{M}_n ]}
\in\mathrm dx\, \big|\,
\sigma_\Omega(\gamma_n)\big).
\end{split}
\end{equation}
Lemma~\ref{lembasic2} therefore assures that it holds $ \P $-a.s.\ that
\begin{equation}
\begin{split}
  &\mathbbm{1}^{ [0,\infty] }_{ [0,1] }( \gamma_n )
  \cdot
  \E\!\left[
    \left|
      \mathcal{Z}_1^{ Z, \delta, b, W^{ (n), \triangle } }
      -
      \mathcal{Z}_1^{ Z, \delta, b, W^{ (n), \Box } }
    \right|
    \,
    \bigg|
    \,
  \sigma_\Omega(\gamma_n)
  \right]\\
&=
  \left[
    \int G \, \mathrm{d}\P_{ \min\{\gamma_n,1\} }
  \right]
  \mathbbm{1}^{ [0,\infty] }_{ [0,1] }( \gamma_n )
  .
\end{split}
\end{equation}
Equation \eqref{asdfg} hence demonstrates that it holds $ \P $-a.s.\ that
\begin{multline}
  \mathbbm{1}^{ [0,\infty] }_{ [0,1] }( \gamma_n )
  \cdot
  \E\!\left[
    \left|
      \mathcal{Z}_1^{ Z, \delta, b, W^{ (n), \triangle } }
      -
      \mathcal{Z}_1^{ Z, \delta, b, W^{ (n), \Box } }
    \right|
    \,
    \bigg|
    \,
    \sigma_\Omega(\gamma_n)
  \right]
  \\
=
  E_{ \min\{\gamma_n,1\} }
  \cdot
  \mathbbm{1}^{ [0,\infty] }_{ [0,1] }( \gamma_n )
  .
\end{multline}
This establishes \eqref{calim1234}. 
The proof of Lemma~\ref{lemmain} is thus completed.
\end{proof}

\subsubsection{A lower bound for hitting time probabilities}

\begin{lem}
\label{lem5}
Assume the setting in Section~\ref{setproc}.
Then 
\begin{equation}
  \inf_{ n \in \N } 
  \Big[
  \P\big(
    0 \leq \gamma_n \leq 
    \nicefrac{ 1 }{ 2 } 
  \big) 
  \Big] 
  > 0 .
\end{equation}
\end{lem}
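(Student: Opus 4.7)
I will exhibit an event $A\subseteq\{0\leq\gamma_n\leq 1/2\}$ whose probability does not depend on $n$ and is strictly positive. Two observations from the present section drive the argument. First, by Lemma~\ref{lembasic1} the process $Z^{(n),\triangle}$ is, for every $n\in\N$, a solution of the squared Bessel SDE~\eqref{BesselSDE} of dimension $\delta\in(0,2)$ with drift parameter $b$, starting at $Z$ and driven by a Brownian motion independent of $Z$; hence the distribution of $Z^{(n),\triangle}$ on $C([0,\infty),\R)$ depends only on $\delta$, $b$, and the law of $Z$, but is independent of $n$. Second, item~\eqref{mnbbv} of Lemma~\ref{trivcor} combined with the identity $X_1^{(n),[m]}=X_2^{(n),[m-1]}$ (item~\eqref{trivcorit1} of Lemma~\ref{trivcor}) and the defining relation of $Z^{(n),\triangle}$ yield $Z^{(n),\triangle}_{X_1^{(n),[m]}}=0$ almost surely for every $m\in\N$; moreover, since $Z^{(n),\triangle}$ and $\mathcal{Z}^{Z,\delta,b,\tilde W}$ agree on $[0,X_2^{(n),[0]}]$, the iterate $X_1^{(n),[1]}=X_2^{(n),[0]}$ is the smallest zero of $Z^{(n),\triangle}$.

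\paragraph{The sufficient event.}
Define
\begin{equation*}
A=\Big\{\exists\,s\in[0,\tfrac{1}{2}]\colon Z^{(n),\triangle}_s=0\Big\}\cap\Big\{\forall\,t\in(\tfrac{1}{2},1]\colon Z^{(n),\triangle}_t>0\Big\}.
\end{equation*}
On $A$ the smallest zero of $Z^{(n),\triangle}$ lies in $[0,1/2]$, so $X_1^{(n),[1]}\leq 1/2$ and in particular $\mathcal{M}_n\geq 1$. Moreover, because every $X_1^{(n),[m]}$ with $m\in\N$ is a zero of $Z^{(n),\triangle}$ and no such zero lies in $(1/2,1]$ on $A$, every $X_1^{(n),[m]}$ that belongs to $[0,1]$ in fact belongs to $[0,1/2]$. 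Hence $\gamma_n=X_1^{(n),[\mathcal{M}_n]}\in[0,1/2]$ on $A$, which proves $A\subseteq\{0\leq\gamma_n\leq 1/2\}$.

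\paragraph{Lower bound for $\P(A)$ and main obstacle.}
Since the law of $Z^{(n),\triangle}$ is independent of $n$, so is $\P(A)$, and it suffices to show $\P(A)>0$. The first-hit event $\{\exists s\in[0,1/2]\colon Z^{(n),\triangle}_s=0\}$ has positive probability: using $Z<\infty$ almost surely, I would choose $C\in(0,\infty)$ with $\P(Z\leq C)>0$ and apply the comparison principle (Lemma~\ref{lemcomparison}) together with Lemma~\ref{comptriv} to conclude that a squared Bessel process of dimension $\delta<2$ starting from any value in $[0,C]$ reaches $0$ before time $1/2$ with positive probability. Conditional on the first hit $\sigma\in[0,1/2)$, the strong Markov property reduces the second event in the definition of $A$ to the requirement that a squared Bessel process of dimension $\delta<2$ started at $0$ has no zero in $(1/2-\sigma,1-\sigma]$; this is precisely the event that the It\^o excursion straddling the time $1/2-\sigma$ extends past $1-\sigma$, and it has strictly positive probability by the Dynkin--Lamperti arc-sine-type law for the stable subordinator of index $\alpha=1-\delta/2\in(0,1)$ arising as the inverse local time of the Bessel process at $0$. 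The main obstacle is the rigorous justification of this positive long-excursion probability; it can be handled either by invoking the explicit It\^o excursion measure for squared Bessel processes (cf.\ Revuz \& Yor~\cite{MR1725357}) or by combining the hitting-time estimate Lemma~\ref{hitnotzero} with its complement and the scaling Lemma~\ref{lem101}.
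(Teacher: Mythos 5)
Your proof is correct and follows essentially the same route as the paper: both reduce the claim to the positivity of the probability that the squared Bessel process started from $Z$ hits $0$ in $[0,\nicefrac12]$ but has no zero on $[\nicefrac12,1]$, using Lemma~\ref{lembasic1} to transfer this statement from $W^{(n),\triangle}$ to $\tilde W$ (equivalently, to see that the probability of your event $A$ does not depend on $n$), and using item~\eqref{mnbbv} of Lemma~\ref{trivcor} to see that $\gamma_n$ must be a zero of the process. The paper asserts the final positivity without elaboration, so your excursion-theoretic sketch goes beyond the source's own level of detail but is not needed to replicate it.
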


\begin{proof}[Proof of Lemma~\ref{lem5}]
First, observe that for all
$n\in\N$
it holds that
\begin{equation}
\big(
\gamma_n\in [0,1]
\big)
\iff
\big(
\mathcal{M}_n\neq 0
\big)
\iff
\big(
X_1^{(n),[1]}\leq 1
\big)
\iff
\big(
X_2^{(n),[0]}\leq 1
\big).
\end{equation}
Hence, we obtain that for all $n\in\N$ it holds that
\begin{equation}
\P\!
\left(
\big(
\gamma_n\in [0,1]
\big)
\iff
\big(
\exists\, t\in [0,1]\colon
\mathcal{Z}_t^{Z,\delta,b,\tilde{W}}
=0
\big)
\right)
=1.
\end{equation}
This ensures that for all
$n\in\N$, $\ast\in\{\triangle,\Box\}$
it holds that
\begin{equation}
\P\!
\left(
\big(
\gamma_n\in [0,1]
\big)
\iff
\big(
\exists\, t\in [0,1]\colon
Z_t^{(n),\ast}
=0
\big)
\right)
=1.
\end{equation}
This and Lemma~\ref{lembasic1} demonstrate that for all
$n\in\N$, $\ast\in\{\triangle,\Box\}$
it holds that
\begin{equation}\label{109}
\P\!
\left(
\big(\gamma_n\in [0,1]\big)
\iff
\big(
\exists\, t\in [0,1]\colon
\mathcal{Z}_t^{Z,\delta,b,W^{(n),\ast}}
=0
\big)
\right)
=1.
\end{equation}
Next note that for all
$n\in\N$, $\ast\in\{\triangle,\Box\}$
it holds that
\begin{equation}
\P\!
\left(
\forall\, m\in\N\colon
Z^{(n),\ast}_{X^{(n),[m]}_1}
=0
\right)
=1.
\end{equation}
This shows that for all
$n\in\N$, $\ast\in\{\triangle,\Box\}$
it holds that
\begin{equation}
\P\!
\left(
\big(\gamma_n\in [0,1]\big)
\implies
\big(
Z^{(n),\ast}_{\min\{\gamma_n,1\}}
=0
\big)
\right)
=1.
\end{equation}
Lemma~\ref{lembasic1} hence proves
that for all
$n\in\N$, $\ast\in\{\triangle,\Box\}$
it holds that
\begin{equation}\label{201}
\P\!
\left(
\big(\gamma_n\in [0,1]\big)
\implies
\big(\mathcal{Z}_{\min\{\gamma_n,1\}}^{Z,\delta,b,W^{(n),\ast}}
=0
\big)
\right)
=1.
\end{equation}
Combining \eqref{109} and \eqref{201} demonstrates that for all
$n\in\N$, $\ast\in\{\triangle,\Box\}$
it holds that
\begin{equation}
\begin{split}
&\P \big(0\leq \gamma_n\leq \nicefrac{1}{2}\big)
\\
&=
\P \big(\big\{0\leq \gamma_n\leq \nicefrac{1}{2}\big\}
\\
&\quad
\cap
\big\{
\big(\gamma_n\in [0,1]\big)
\iff
\big(
\exists\, t\in [0,1]\colon
\mathcal{Z}_t^{Z,\delta,b,W^{(n),\ast}}
=0
\big)
\big\}
\\
&\quad
\cap
\big\{
\big(\gamma_n\in [0,1]\big)
\implies
\big(\mathcal{Z}_{\min\{\gamma_n,1\}}^{Z,\delta,b,W^{(n),\ast}}
=0
\big)
\big\}
\big)
\\
&\geq
\P \big(
\big\{
\exists \, t\in[0,\nicefrac{1}{2}]\colon 
\mathcal{Z}_t^{Z,\delta,b,W^{(n),\ast}}
=0
\big\}
\cap
\big\{
\forall \, t\in[1/2,1]\colon 
\mathcal{Z}_t^{Z,\delta,b,W^{(n),\ast}}
>0
\big\}
\\
&\quad
\cap
\big\{
\big(\gamma_n\in [0,1]\big)
\iff
\big(
\exists\, t\in [0,1]\colon
\mathcal{Z}_t^{Z,\delta,b,W^{(n),\ast}}
=0
\big)
\big\}
\\
&\quad
\cap
\big\{
\big(\gamma_n\in [0,1]\big)
\implies
\big(\mathcal{Z}_{\min\{\gamma_n,1\}}^{Z,\delta,b,W^{(n),\ast}}
=0
\big)
\big\}
\big)
\\
&=\P \big(
\big\{
\exists \, t\in[0,\nicefrac{1}{2}]\colon 
\mathcal{Z}_t^{Z,\delta,b,W^{(n),\ast}}
=0
\big\}
\cap
\big\{
\forall \, t\in[1/2,1]\colon 
\mathcal{Z}_t^{Z,\delta,b,W^{(n),\ast}}
>0
\big\}
\big).
\end{split}
\end{equation}
This and Lemma~\ref{lembasic1} show that for all $n\in\N$ it holds that
\begin{multline}
\P \big(0\leq \gamma_n\leq \nicefrac{1}{2}\big)
\\
\geq
\P \big(
\big\{
\exists \, t\in[0,\nicefrac{1}{2}]\colon 
\mathcal{Z}_t^{Z,\delta,b,\tilde W}
=0
\big\}
\cap
\big\{
\forall \, t\in[1/2,1]\colon 
\mathcal{Z}_t^{Z,\delta,b,\tilde W}
>0
\big\}
\big)
>0.
\end{multline}
The proof of Lemma~\ref{lem5} is thus completed.
\end{proof}

\subsubsection{A refined lower bound for strong
\texorpdfstring{$ L^1 $}{L1}-distances
between the constructed squared Bessel processes}

\begin{lem}
\label{lem12}
Assume the setting in Section~\ref{setproc}.
Then 
\begin{equation}
\label{eq:lem12}
  \inf_{ 
    n \in \N \cap [5,\infty)
  }
  \left(
    n^{ \delta / 2 }
    \cdot 
    \E\!\left[
      \big|
        \mathcal{Z}_1^{ Z, \delta, b, W^{ (n), \triangle } }
        -
        \mathcal{Z}_1^{ Z, \delta, b, W^{ (n), \Box } }
      \big|
    \right]
  \right)
  > 0
  .
\end{equation}
\end{lem}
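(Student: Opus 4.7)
The plan is to assemble the uniform lower bound from three preceding results: the conditional $L^1$-distance identity in Lemma~\ref{lemmain}, the uniform (in $n$) lower bound on the conditional $L^1$-distances $E_r$ from Lemma~\ref{lem11}, and the uniform lower bound on the hitting probability $\P(0 \leq \gamma_n \leq \nicefrac{1}{2})$ from Lemma~\ref{lem5}. All the real work has already been done in these three lemmas; what remains is to assemble them via the tower property of conditional expectations.

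First, I would apply the tower property to rewrite
\begin{equation*}
\E\!\left[\big|\mathcal{Z}_1^{Z,\delta,b,W^{(n),\triangle}} - \mathcal{Z}_1^{Z,\delta,b,W^{(n),\Box}}\big|\right]
=
\E\!\left[\E\!\left[\big|\mathcal{Z}_1^{Z,\delta,b,W^{(n),\triangle}} - \mathcal{Z}_1^{Z,\delta,b,W^{(n),\Box}}\big| \, \big| \, \sigma_\Omega(\gamma_n)\right]\right]
\end{equation*}
and then lower-bound the outer expectation by inserting the indicator $\mathbbm{1}^{\Omega}_{\{0 \leq \gamma_n \leq 1/2\}}$, which is dominated by $\mathbbm{1}^{\Omega}_{\{0 \leq \gamma_n \leq 1\}}$. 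On the event $\{0 \leq \gamma_n \leq \nicefrac{1}{2}\}$ it holds that $\min\{\gamma_n,1\} = \gamma_n \in [0,\nicefrac{1}{2}]$, so Lemma~\ref{lemmain} identifies the inner conditional expectation (restricted to this event) with $E_{\gamma_n}$, and the crude bound $E_{\gamma_n} \geq \inf_{r \in [0,1/2]} E_r$ yields
\begin{equation*}
\E\!\left[\big|\mathcal{Z}_1^{Z,\delta,b,W^{(n),\triangle}} - \mathcal{Z}_1^{Z,\delta,b,W^{(n),\Box}}\big|\right]
\geq
\Big(\inf_{r \in [0, 1/2]} E_r\Big) \cdot \P\big(0 \leq \gamma_n \leq \nicefrac{1}{2}\big).
\end{equation*}

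Second, Lemma~\ref{lem11} provides a constant $c_1 \in (0,\infty)$ such that $\inf_{r \in [0,1/2]} E_r \geq c_1 \cdot n^{-\delta/2}$ for every $n \in \N \cap [5,\infty)$, and Lemma~\ref{lem5} provides a constant $c_2 \in (0,\infty)$ such that $\P(0 \leq \gamma_n \leq \nicefrac{1}{2}) \geq c_2$ for every $n \in \N$. Multiplying these two bounds into the previous display yields $\E[|\mathcal{Z}_1^{Z,\delta,b,W^{(n),\triangle}} - \mathcal{Z}_1^{Z,\delta,b,W^{(n),\Box}}|] \geq c_1 c_2 \cdot n^{-\delta/2}$ uniformly for $n \in \N \cap [5,\infty)$, which is exactly \eqref{eq:lem12}.

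There is no substantial obstacle here, only notational bookkeeping: one must confirm that restricting to $\gamma_n \leq \nicefrac{1}{2}$ (rather than $\gamma_n \leq 1$) is precisely what is needed so that $E_{\min\{\gamma_n,1\}}$ simplifies to $E_{\gamma_n}$ with $\gamma_n$ in the range $[0,\nicefrac{1}{2}]$ on which Lemma~\ref{lem11}'s infimum is taken. The restriction $n \geq 5$ in \eqref{eq:lem12} is inherited from the corresponding restriction in Lemma~\ref{lem11} (which in turn comes from Lemma~\ref{lem1100}'s requirement that $\nicefrac{2}{n} \leq \nicefrac{1}{2}$).
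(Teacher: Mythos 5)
Your proposal is correct and follows essentially the same route as the paper's proof: tower property, restriction to the event $\{0 \le \gamma_n \le \nicefrac12\}$, identification of the conditional expectation with $E_{\gamma_n}$ via Lemma~\ref{lemmain}, and then the uniform lower bounds of Lemma~\ref{lem5} and Lemma~\ref{lem11}. The only cosmetic point is that the quantity $E_r$ from Lemma~\ref{lemmain} depends on $n$ (the paper's proof writes $E_{n,r}$ to make this explicit), so the displayed inequality should be read with that $n$-dependence understood.
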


\begin{proof}[Proof of Lemma~\ref{lem12}]
Throughout this proof 
for every 
$ n \in \N \cap [5,\infty) $,
$ \ast \in \{ \triangle, \Box \} $,
$ r \in [0,1] $
let 
$
  \mathcal{W}^{ n, \ast, r } \colon \Omega \to \mathcal{C}_0
$
be the Brownian motion given by
$
  W^{ n, \ast, r } = F^{ \ast }_n( \mathfrak{T}^1_n(r), Y^{ [0] } )
$
and
for every 
$ n \in \N \cap [5,\infty) $,
$ r \in [0,1] $
let 
$ E_{ n, r } \in \R $
be the real number given by
\begin{align}
&
  E_{ n, r }
  =
\\ &
\nonumber
  \E\bigg[
    \big|
      \mathcal{Z}_{ 1 - r }^{ 0, \delta, b, \mathcal{W}^{ n, \triangle, r } }
      \!\!
      -
      \mathcal{Z}_{ 1 - r }^{ 0, \delta, b, \mathcal{W}^{ n, \Box, r } }
    \big|
    \, 
    \Big| 
    \,
    \cap_{
      s \in [ \mathfrak{T}^n_2(r) , 1 - r ] 
    }
    \Big\{
      \max_{
        \ast \in \{ \triangle, \Box \}
      }
      \mathcal{Z}_s^{ 0, \delta, b, \mathcal{W}^{ n, \ast, r } }
      > 0 
    \Big\}
  \bigg]
  .
\end{align}
Next observe that the tower property for conditional expectations ensures that
for all $ n \in \N \cap [5,\infty) $ it holds that
\begin{equation}
\begin{split}
&
  \E\!\left[
    \big|
      \mathcal{Z}_1^{ Z, \delta, b, W^{ (n), \triangle } }
      -
      \mathcal{Z}_1^{ Z, \delta, b, W^{ (n), \Box } }
    \big| 
  \right]
\\
&
  =
  \E\bigg[
    \,
    \E\!\left[
      \big|
        \mathcal{Z}_1^{ Z, \delta, b, W^{ (n), \triangle } }
        -
        \mathcal{Z}_1^{ Z, \delta, b, W^{ (n), \Box } }
      \big|
      \, \Big| \,
      \sigma_\Omega(\gamma_n)
    \right]
  \bigg]
  .
\end{split}
\end{equation}
Combining this with Lemma~\ref{lemmain} implies that for all $ n \in \N \cap [5,\infty) $ 
it holds that
\begin{equation}
\begin{split}
&
  \E\!\left[
    \big|
      \mathcal{Z}_1^{ Z, \delta, b, W^{ (n), \triangle } }
      -
      \mathcal{Z}_1^{ Z, \delta, b, W^{ (n), \Box } }
    \big| 
  \right]
\\
&
\geq
  \E\bigg[
    \E\!\left[
      \big|
        \mathcal{Z}_1^{ Z, \delta, b, W^{ (n), \triangle } }
        -
        \mathcal{Z}_1^{ Z, \delta, b, W^{ (n), \Box } }
      \big|
      \, \Big| \,
      \sigma_\Omega(\gamma_n)
    \right]
    \mathbbm{1}^{ \Omega }_{ \{ 0 \leq \gamma_n \leq 1 \} }
  \bigg]
\\
&
=
  \E\Big[
    E_{ n, \gamma_n }
    \mathbbm{1}^{ \Omega }_{ \{ 0 \leq \gamma_n \leq 1 \} }
  \Big]
\geq
  \E\Big[
    E_{ n, \gamma_n }
    \mathbbm{1}^{ \Omega }_{ \{ 0 \leq \gamma_n \leq \nicefrac{ 1 }{ 2 } \} }
  \Big]
  .
\end{split}
\end{equation}
Hence, we obtain that for all $ n \in \N \cap [5,\infty) $ 
it holds that
\begin{equation}
\begin{split}
&
  \E\!\left[
    \big|
      \mathcal{Z}_1^{ Z, \delta, b, W^{ (n), \triangle } }
      -
      \mathcal{Z}_1^{ Z, \delta, b, W^{ (n), \Box } }
    \big| 
  \right]
\\
&
\geq
  \E\Big[
    \mathbbm{1}^{ \Omega }_{ \{ 0 \leq \gamma_n \leq \nicefrac{ 1 }{ 2 } \} }
  \Big]
  \bigg[
    \inf_{ r \in [0, \nicefrac{ 1 }{ 2 }] }
    E_{ n, r }
  \bigg]
=
  \P\Big(
    \gamma_n \in [0, \nicefrac{ 1 }{ 2 }] 
  \Big) 
  \bigg[
    \inf_{ r \in [0, \nicefrac{ 1 }{ 2 }] }
    E_{ n, r }
  \bigg]
  .
\end{split}
\end{equation}
This assures that
\begin{equation}
\begin{split}
&
  \inf_{ n \in \N \cap [5,\infty) }
  \left(
  n^{ \delta / 2 }
  \cdot
  \E\!\left[
    \big|
      \mathcal{Z}_1^{ Z, \delta, b, W^{ (n), \triangle } }
      -
      \mathcal{Z}_1^{ Z, \delta, b, W^{ (n), \Box } }
    \big| 
  \right]
  \right)
\\
&
\geq
  \inf_{ n \in \N \cap [5,\infty) }
  \left(
  \P\Big(
    \gamma_n \in [0, \nicefrac{ 1 }{ 2 }] 
  \Big) 
  \cdot
  \left(
  n^{ \delta / 2 }
  \cdot
    \inf_{ r \in [0, \nicefrac{ 1 }{ 2 }] }
    E_{ n, r }
  \right)
  \right)
\\
&
\geq
  \left[
    \inf_{ n \in \N \cap [5,\infty) }
    \P\Big(
      \gamma_n \in [0, \nicefrac{ 1 }{ 2 }] 
    \Big) 
  \right]
  \cdot
  \left[
    \inf_{ n \in \N \cap [5,\infty) }
    \left(
    n^{ \delta / 2 }
    \cdot
      \inf_{ r \in [0, \nicefrac{ 1 }{ 2 }] }
      E_{ n, r }
    \right)
  \right]
  .
\end{split}
\end{equation}
Combining this with Lemma~\ref{lem5}
and
Lemma~\ref{lem11}
establishes \eqref{eq:lem12}.
The proof of Lemma~\ref{lem12} is thus completed.
\end{proof}

\subsection{Proofs for the lower error bounds}
\label{proofofmaintheo}

\begin{lem}
\label{proplowerbound}
Assume the setting in Section~\ref{setproc}.
Then there exists a real number $ c \in (0,\infty) $ 
such that 
for all $ n \in \N $ 
it holds that
\begin{equation}
  \inf_{ 
    \substack{
      \varphi \colon \R^n \to \R
    \\
      \text{Borel-measurable}
    }
  }
  \E\!\left[
    \big|
      \mathcal{Z}_1^{ Z, \delta, b, \tilde{W} }
      -
      \varphi( 
        \tilde{W}_{ 1 / n }, 
        \tilde{W}_{ 2 / n }, 
        \dots, 
        \tilde{W}_1 
      )
    \big|
  \right]
  \geq 
  c \cdot n^{ - \delta / 2 } 
  .
\end{equation}
\end{lem}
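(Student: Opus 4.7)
The plan is to run a standard two-trajectory coupling argument that reduces the claim to Lemma~\ref{lem12}. The decisive structural ingredients will be Lemma~\ref{eq4l} (the Brownian motions $W^{(n),\triangle}$ and $W^{(n),\Box}$ coincide on the equidistant grid $\{k/n : k \in \N_0\}$) and Lemma~\ref{lembasic1} (each $W^{(n),\ast}$ is in fact a Brownian motion and is independent of $Z$).

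I would fix an arbitrary Borel measurable $\varphi\colon \R^n \to \R$ and first argue that $(Z, W^{(n),\ast})$ has the same law as $(Z, \tilde W)$ for each $\ast \in \{\triangle, \Box\}$: both consist of a non-negative random variable together with an independent Brownian motion, and the pathwise uniqueness assumption from Section~\ref{settinggensde} pins down the law of $\mathcal{Z}^{Z,\delta,b,\cdot}$. This will give
\[
\E\!\left[\big|\mathcal{Z}_1^{Z,\delta,b,\tilde W} - \varphi(\tilde W_{1/n},\dots,\tilde W_1)\big|\right] = \E\!\left[\big|\mathcal{Z}_1^{Z,\delta,b,W^{(n),\ast}} - \varphi(W^{(n),\ast}_{1/n},\dots,W^{(n),\ast}_1)\big|\right]
\]
for both $\ast \in \{\triangle, \Box\}$.

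Next, using Lemma~\ref{eq4l} I would introduce $V := \varphi(W^{(n),\triangle}_{1/n},\dots,W^{(n),\triangle}_1)$, which almost surely equals $\varphi(W^{(n),\Box}_{1/n},\dots,W^{(n),\Box}_1)$. Writing $\mathcal{Z}_1^{Z,\delta,b,W^{(n),\triangle}} - \mathcal{Z}_1^{Z,\delta,b,W^{(n),\Box}} = (\mathcal{Z}_1^{Z,\delta,b,W^{(n),\triangle}} - V) + (V - \mathcal{Z}_1^{Z,\delta,b,W^{(n),\Box}})$ and applying the triangle inequality together with the previous identity, I will obtain the key bound
\[
2\, \E\!\left[\big|\mathcal{Z}_1^{Z,\delta,b,\tilde W} - \varphi(\tilde W_{1/n},\dots,\tilde W_1)\big|\right] \geq \E\!\left[\big|\mathcal{Z}_1^{Z,\delta,b,W^{(n),\triangle}} - \mathcal{Z}_1^{Z,\delta,b,W^{(n),\Box}}\big|\right].
\]

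Taking the infimum over $\varphi$ on the left and invoking Lemma~\ref{lem12} on the right then produces a lower bound of order $n^{-\delta/2}$ for all $n \geq 5$. The finitely many remaining cases $n \in \{1,2,3,4\}$ I would handle by absorption into the constant $c$: for each such $n$ the same inequality provides a positive lower bound (positivity of $\E[|\mathcal{Z}_1^{Z,\delta,b,W^{(n),\triangle}} - \mathcal{Z}_1^{Z,\delta,b,W^{(n),\Box}}|]$ follows from the ingredients used in the proof of Lemma~\ref{lem12}, or more directly from Lemma~\ref{lemdistb} via a suitable path-decomposition), and $n^{-\delta/2}$ is bounded on this finite set. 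I expect the only delicate point to be making the distributional substitution in the first step fully rigorous, which amounts to unwinding pathwise uniqueness together with the independence statement in Lemma~\ref{lembasic1}; the triangle-plus-grid-coincidence step and the handling of small $n$ are then essentially routine.
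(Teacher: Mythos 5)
Your argument for $n \geq 5$ matches the paper's proof exactly: you correctly identify Lemma~\ref{lembasic1} (equality in law of $(Z,\tilde W)$ and $(Z,W^{(n),\ast})$) as the tool to transfer the approximation error onto the coupled processes, Lemma~\ref{eq4l} as the reason the two Brownian motions agree at the grid points, the triangle inequality to pass to the $L^1$-distance of the two CIR trajectories, and Lemma~\ref{lem12} to conclude. The decomposition and the chain of lemmas are the same as in the paper.

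The one genuine difference is in the handling of $n \in \{1,2,3,4\}$, and there your proposal has a gap. You propose to establish positivity of
$\E\big[\big|\mathcal{Z}_1^{Z,\delta,b,W^{(n),\triangle}} - \mathcal{Z}_1^{Z,\delta,b,W^{(n),\Box}}\big|\big]$
directly for small $n$, invoking the ingredients of Lemma~\ref{lem12} or Lemma~\ref{lemdistb}. But the chain Lemma~\ref{lem1100}~$\to$~Lemma~\ref{lem11}~$\to$~Lemma~\ref{lem12} explicitly needs $n \geq 5$ (it uses $t_2 \leq 2/n < 1/2 \leq t_3$ to separate the interval on which hitting of zero is controlled from the perturbed interval $[t_1,t_2]$); for $n \leq 4$ this collapses, so you cannot simply cite those lemmas, and a separate conditioning/rescaling argument would be needed. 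The paper avoids this entirely with an elementary monotonicity trick you do not mention: since $1,2,3,4$ all divide $12$, any Borel function of $(\tilde W_{1/n},\dots,\tilde W_1)$ with $n\in\{1,2,3,4\}$ is also a Borel function of $(\tilde W_{1/12},\dots,\tilde W_1)$, so $e_n \geq e_{12}$; combined with $e_{12}\geq \tfrac{C}{2}\cdot 12^{-\delta/2}\geq \tfrac{C}{24}$ and $n^{-\delta/2}\leq 1$, the small cases are absorbed without any new probabilistic input. You should replace the vague positivity claim for $n\leq 4$ with this monotonicity observation, or alternatively carry out a genuine ad hoc positivity argument for those four cases, which would require more work than you seem to anticipate.
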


\begin{proof}[Proof of Lemma~\ref{proplowerbound}]
Throughout this proof let 
$ e = ( e_n )_{ n \in \N } \colon \N \to [0,\infty] $ be the function which 
satisfies for all $ n \in \N $ that
\begin{equation}
  e_n
  =
  \inf_{ 
    \substack{
      \varphi \colon \R^n \to \R
    \\
      \text{Borel-measurable}
    }
  }
  \E\!\left[
    \big|
      \mathcal{Z}_1^{ Z, \delta, b, \tilde{W} }
      -
      \varphi( 
        \tilde{W}_{ 1 / n }, 
        \tilde{W}_{ 2 / n }, 
        \dots, 
        \tilde{W}_1 
      )
    \big|
  \right]
\end{equation}
and let $ c, C \in [0,\infty] $ be the real numbers given by
\begin{equation}
  C 
  =
  \inf_{ n \in \N \cap [5,\infty) }
  \left(
    n^{ \delta / 2 }
    \cdot
      \E\!\left[
        \big|
          \mathcal{Z}_1^{ Z, \delta, b, W^{ (n), \triangle } }
          -
          \mathcal{Z}_1^{ Z, \delta, b, W^{ (n), \Box } }
        \big| 
      \right]
  \right)
\end{equation}
and 
$ 
  c = \frac{ C }{ 24 }
$.
Note that 
items~\eqref{item:lembasic1_1} and \eqref{item:lembasic1_2} of Lemma~\ref{lembasic1} 
ensure that 
for all $ n \in \N $ 
and all Borel-measurable functions 
$ \varphi \colon \R^n \to \R $ 
it holds that
\begin{equation}
\label{eq:main_lemma_0}
\begin{split}
&
  \E\!\left[
    \big|
      \mathcal{Z}^{ Z, \delta, b, \tilde{W} }
      -
      \varphi( \tilde{W}_{ 1 / n }, \tilde{W}_{ 1 / n }, \dots, \tilde{W}_1 )
    \big| 
  \right]
\\
&
  =
  \tfrac{ 1 }{ 2 } 
  \left( 
    2 \cdot 
    \E\!\left[
      \big|
        \mathcal{Z}_1^{ Z, \delta, b, \tilde{W} }
        -
        \varphi( \tilde{W}_{ 1 / n }, \tilde{W}_{ 1 / n }, \dots, \tilde{W}_1 )
      \big| 
    \right]
  \right)
\\
&=
  \tfrac{ 1 }{ 2 } 
  \,
  \Big( 
    \E\!\left[
      \big|
        \mathcal{Z}_1^{ Z, \delta, b, W^{ (n), \triangle } }
        -
        \varphi\big( 
          W_{ 1 / n }^{ (n), \triangle },
          W_{ 2 / n }^{ (n), \triangle },
          \dots,
          W_1^{ (n), \triangle }
        \big)
      \big|
    \right]
\\
&
\quad
    +
    \E\!\left[
      \big|
        \mathcal{Z}_1^{ Z, \delta, b, W^{ (n), \Box } }
        -
        \varphi\big( 
          W_{ 1 / n }^{ (n), \Box },
          W_{ 2 / n }^{ (n), \Box },
          \dots,
          W_1^{ (n), \Box }
        \big)
      \big| 
    \right]
  \Big)
  \,
  .
\end{split}
\end{equation}
Next observe that Lemma~\ref{eq4l} ensures that
for all $ n \in \N $ 
and all Borel-measurable functions 
$ \varphi \colon \R^n \to \R $ 
it holds that
\begin{equation}
        \varphi\big( 
          W_{ 1 / n }^{ (n), \triangle },
          W_{ 2 / n }^{ (n), \triangle },
          \dots,
          W_1^{ (n), \triangle }
        \big)
        =
        \varphi\big( 
          W_{ 1 / n }^{ (n), \Box },
          W_{ 2 / n }^{ (n), \Box },
          \dots,
          W_1^{ (n), \Box }
        \big).
\end{equation}
Combining this with \eqref{eq:main_lemma_0} proves that
for all $ n \in \N $ 
and all Borel-measurable functions 
$ \varphi \colon \R^n \to \R $ 
it holds that
\begin{equation}
\begin{split}
&
  \E\!\left[
    \big|
      \mathcal{Z}_1^{ Z, \delta, b, \tilde{W} }
      -
      \varphi( \tilde{W}_{ 1 / n }, \tilde{W}_{ 1 / n }, \dots, \tilde{W}_1 )
    \big| 
  \right]
\\
&
=
  \tfrac{ 1 }{ 2 } 
  \,
  \Big( 
    \E\!\left[
      \big|
        \mathcal{Z}_1^{ Z, \delta, b, W^{ (n), \triangle } }
        -
        \varphi\big( 
          W_{ 1 / n }^{ (n), \triangle },
          W_{ 2 / n }^{ (n), \triangle },
          \dots,
          W_1^{ (n), \triangle }
        \big)
      \big|
    \right]
\\
&
\quad
    +
    \E\!\left[
      \big|
        \mathcal{Z}_1^{ Z, \delta, b, W^{ (n), \Box } }
        -
        \varphi\big( 
          W_{ 1 / n }^{ (n), \triangle },
          W_{ 2 / n }^{ (n), \triangle },
          \dots,
          W_1^{ (n), \triangle }
        \big)
      \big| 
    \right]
  \Big)
  \,
  .
\end{split}
\end{equation}
The triangle inequality hence implies that 
for all $ n \in \N $ 
and all Borel-measurable functions 
$ \varphi \colon \R^n \to \R $ 
it holds that
\begin{equation}
\begin{split}
&
  \E\!\left[
    \big|
      \mathcal{Z}_1^{ Z, \delta, b, \tilde{W} }
      -
      \varphi( \tilde{W}_{ 1 / n }, \tilde{W}_{ 1 / n }, \dots, \tilde{W}_1 )
    \big| 
  \right]
\\
&
  \geq 
  \tfrac{ 1 }{ 2 } 
  \, 
  \E\!\left[
    \big|
      \mathcal{Z}_1^{ Z, \delta, b, W^{ (n), \triangle } }
      -
      \mathcal{Z}_1^{ Z, \delta, b, W^{ (n), \Box } }
    \big| 
  \right]
  .
\end{split}
\end{equation}
This establishes that for all $ n \in \N \cap [5,\infty) $
it holds that
\begin{equation}
  e_n
  \geq 
  \tfrac{ 1 }{ 2 } 
  \, 
  \E\!\left[
    \big|
      \mathcal{Z}_1^{ Z, \delta, b, W^{ (n), \triangle } }
      -
      \mathcal{Z}_1^{ Z, \delta, b, W^{ (n), \Box } }
    \big| 
  \right]
  \geq 
  \left[
    \tfrac{ C }{ 2 }
  \right]
  \cdot n^{ - \delta / 2 }
  .
\end{equation}
The fact that
$
  \forall \, n \in \{ 1, 2, 3, 4 \} \colon
  e_n \geq e_{ 12 }
$
hence assures that for all $ n \in \N $ 
it holds that
\begin{equation}
\label{eq:main_lemma_end}
\begin{split}
    e_n 
&
  \geq 
    \min\!\left\{ 
      e_1, e_2, e_3, e_4, 
      \left[
        \tfrac{ C }{ 2 }
      \right]
      \cdot n^{ - \delta / 2 }
    \right\}
  \geq
    \min\!\left\{ 
      e_{ 12 }, 
      \left[
        \tfrac{ C }{ 2 }
      \right]
      \cdot n^{ - \delta / 2 }
    \right\}
\\
&
  \geq
    \min\!\left\{ 
      \left[
        \tfrac{ C }{ 2 }
      \right]
      \cdot 12^{ - \delta / 2 }
      ,
      \left[
        \tfrac{ C }{ 2 }
      \right]
      \cdot n^{ - \delta / 2 }
    \right\}
  \geq
      \left[
        \tfrac{ C }{ 2 }
      \right]
      \cdot 12^{ - \delta / 2 }
      \cdot n^{ - \delta / 2 }
\\ &
  \geq
      \left[
        \tfrac{ C }{ 24 }
      \right]
      \cdot n^{ - \delta / 2 }
  =
      c \cdot n^{ - \delta / 2 }
      .
\end{split}
\end{equation}
In the next step we observe that Lemma~\ref{lem12} 
proves that $ C > 0 $.
Hence, we obtain that $ c \in (0,\infty) $.
This and \eqref{eq:main_lemma_end} complete the proof
of Lemma~\ref{proplowerbound}.
\end{proof}

\section{Lower error bounds for CIR processes and squared Bessel processes in the general case}
\label{last}

\begin{lem}\label{lemlower1}
let $ \delta \in (0,2) $, $ b,x \in [0,\infty) $,
let 
$ \mathcal{C}_0 $ and $ \mathcal{C}_{ 00 } $ 
be the sets 
given by
$
  \mathcal{C}_0
  =
  \{
    f\in C([0,\infty),\R)\colon f(0)=0
  \}
$
and
$
  \mathcal{C}_{00}
=
  \{
    f \in C([0,1],\R)
  \colon
    f(0)=f(1)=0
  \}
$,
let $(\Omega,\mathfrak{F},\P)$ be a complete probability space,
let 
$ \tilde{W}, \tilde{W}^{(1)},\tilde{W}^{ \triangle },\tilde{W}^{(2)} \colon \Omega\to\mathcal{C}_0$ be Brownian motions,
let 
$ 
  B \colon \Omega \to \mathcal{C}_{00}
$ 
be a Brownian bridge,
let
$
  Y^{ [n] } 
  \colon \Omega \to 
  [ \mathcal{C}_0 ]^3 \times \mathcal{C}_{00}
$,
$ n \in \N_0 $,
be i.i.d.\ random variables 
with
$
  Y^{ [0] } = 
  ( \tilde{W}^{ (1) } , \tilde{W}^{ \triangle }, \tilde{W}^{ (2) }, B )
$,
assume that 
$
  \tilde{W}
$,
$
  \tilde{W}^{ (1) } 
$, 
$  
  \tilde{W}^{ \triangle }
$, 
$
  \tilde{W}^{(2)}
$, 
$ B $, 
$ Y^{ [1] } $,
$ Y^{ [2] } $,
$ \dots $ 
are independent,
let $ X \colon [0,\infty) \times \Omega \to [0,\infty) $
be a
$(\sigma_{ \Omega }(\{\{\tilde W_s\leq a\}\colon a\in\R, s\in [0,t]\}\cup\{A\in\mathfrak{F}\colon \P(A)=0\}))_{t\in [0,\infty)}$-adapted
stochastic process
with continuous sample paths
which satisfies that for all $ t \in [0,\infty) $ it holds $ \P $-a.s.\ that
\begin{equation}
  X_t
  =
  x
  + 
  \int_0^t
  \left(
    \delta - b X_s
  \right)\mathrm{d}s
  +
  \int_0^t
  2
  \sqrt{ X_s } \,\mathrm{d}\tilde W_s
  .
\end{equation}
Then there exists a real number $ c\in (0,\infty) $ such 
that for all $ N \in \N $ it holds that
\begin{equation}\label{finaleq}
  \inf_{
    \substack{
      \varphi \colon \R^N \to \R
    \\
      \text{Borel-measurable}
    }
  }
  \E\!\left[ 
    \big|
      X_1
      -
      \varphi( 
        \tilde W_{ \frac{ 1 }{ N } }, \tilde W_{ \frac{2}{N} }, \dots, 
        \tilde W_1
      )
    \big|
  \right]
  \geq 
  c
  \cdot 
  N^{ 
    - 
    {  \delta  }/{ 2 }
  }.
\end{equation}
\end{lem}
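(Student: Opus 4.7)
The plan is to reduce Lemma~\ref{lemlower1} to the previously established Lemma~\ref{proplowerbound} by taking the auxiliary initial random variable $Z$ in the setup of Section~\ref{setproc} to be the deterministic constant $x$. Since a constant is independent of every other random object, all the independence hypotheses of Section~\ref{setproc} are automatically satisfied when $Z$ is set to $x$ and the remaining objects are taken to be the Brownian motions $\tilde{W},\tilde{W}^{(1)},\tilde{W}^{\triangle},\tilde{W}^{(2)}$, the Brownian bridge $B$, and the i.i.d.\ sequence $(Y^{[n]})_{n\in\N_0}$ that Lemma~\ref{lemlower1} already supplies.

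First, I would instantiate the setup of Section~\ref{setproc} with $Z := x$ on the given probability space, obtaining in particular the process $\mathcal{Z}^{x,\delta,b,\tilde{W}}$. Second, I would invoke pathwise uniqueness for the squared Bessel SDE: because the drift $y\mapsto\delta-by$ is Lipschitz and the diffusion $y\mapsto 2\sqrt{|y|}$ is $\tfrac{1}{2}$-H\"older continuous, the Yamada-Watanabe criterion (see, e.g., Revuz \& Yor~\cite[Chapter~IX]{MR1725357}) yields pathwise uniqueness for nonnegative continuous adapted solutions. Applied to the process $X$ from the hypotheses of Lemma~\ref{lemlower1} and to $\mathcal{Z}^{x,\delta,b,\tilde{W}}$, both of which are continuous, nonnegative, adapted solutions of the same SDE driven by $\tilde{W}$ with the same initial condition $x$, this yields
\[
\P\!\left(\forall\,t\in[0,\infty)\colon X_t=\mathcal{Z}^{x,\delta,b,\tilde{W}}_t\right)=1.
\]

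Third, I would apply Lemma~\ref{proplowerbound} to this instantiation. This supplies a constant $c\in(0,\infty)$ such that for every $N\in\N$,
\[
\inf_{\substack{\varphi\colon\R^N\to\R\\\text{Borel-measurable}}}\E\!\left[\big|\mathcal{Z}_1^{x,\delta,b,\tilde{W}}-\varphi(\tilde{W}_{1/N},\tilde{W}_{2/N},\dots,\tilde{W}_1)\big|\right]\geq c\cdot N^{-\delta/2}.
\]
Substituting $X_1=\mathcal{Z}_1^{x,\delta,b,\tilde{W}}$ $\P$-a.s.\ on the left-hand side yields \eqref{finaleq}.

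The only genuinely technical step is the identification $X=\mathcal{Z}^{x,\delta,b,\tilde{W}}$; all of the substantial probabilistic work in establishing the polynomial lower bound has already been carried out in Lemma~\ref{proplowerbound}. I therefore expect the reduction to be very short, with the only nonroutine ingredient being the appeal to Yamada-Watanabe pathwise uniqueness for the squared Bessel SDE.
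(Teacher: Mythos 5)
Your proposal is correct and follows essentially the same route as the paper. The paper's (very terse) proof cites Kallenberg~\cite[Theorem~21.14]{MR1876169}, which is precisely the Yamada--Watanabe pathwise-uniqueness result you invoke via Revuz \& Yor, together with Lemma~\ref{proplowerbound}; it uses it both to justify the existence of the universally adapted solution map $\mathcal{Z}^{(\cdot),\delta,b,(\cdot)}$ required by the setting of Section~\ref{setproc} and to identify $X$ with $\mathcal{Z}^{x,\delta,b,\tilde{W}}$ so that Lemma~\ref{proplowerbound} (instantiated with the constant $Z \equiv x$, as you do) gives \eqref{finaleq}. Your write-up is just a fuller articulation of that same reduction.
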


\begin{proof}[Proof of Lemma~\ref{lemlower1}]
Inequality \eqref{finaleq} is a consequence of Kallenberg~\cite[Theorem~21.14]{MR1876169}
and Lemma~\ref{proplowerbound}
(observe that all objects in Section~\ref{setproc} exist,
cf.\ Lemma~\ref{trivcor}
for the existence of
$
  W^{ (n), \ast }
  \colon \Omega \to \mathcal{C}_{ 0 }
$
and
$
  Z^{ (n), \ast }
  \colon [0,\infty) \times \Omega \to \R
$
for
$ n \in \N $,
$ \ast \in \{ \triangle, \Box \} $).
The proof of Lemma~\ref{lemlower1} is thus completed.
\end{proof}

\begin{cor}\label{cor1}
Let $ \delta \in (0,2) $, $ b,x \in [0,\infty) $,
let
$ 
  ( \Omega, \mathfrak{F}, \P ) 
$
be a probability space 
with a normal filtration $ ( \mathbb{F}_t )_{ t \in [0,1] } $,
let 
$ 
  W \colon [0,1] \times \Omega \to \R 
$
be a
$ 
  ( \mathbb{F}_t )_{ t \in [0,1] }
$-Brownian motion, 
let $ X \colon [0,1] \times \Omega \to [0,\infty) $
be a $ ( \mathbb{F}_t )_{ t \in [0,1] } $-adapted stochastic process
with continuous sample paths
which satisfies that for all $ t \in [0,1] $ it holds $ \P $-a.s.\ that
\begin{equation}
  X_t
  =
  x
  + 
  \int_0^t
  \left(
    \delta - b X_s
  \right)\mathrm{d}s
  +
  \int_0^t
  2
  \sqrt{ X_s } \,\mathrm{d}W_s
  .
\end{equation}
Then there exists a real number $ c\in (0,\infty) $ such 
that for all $ N \in \N $ it holds that
\begin{equation}
  \inf_{
    \substack{
      \varphi \colon \R^N \to \R
    \\
      \text{Borel-measurable}
    }
  }
  \E\!\left[ 
    \big|
      X_1
      -
      \varphi( 
        W_{ \frac{ 1 }{ N } }, W_{ \frac{2}{N} }, \dots, 
        W_1
      )
    \big|
  \right]
  \geq 
  c
  \cdot 
  N^{ 
    - 
    {  \delta  }/{ 2 }
  }.
\end{equation}
\end{cor}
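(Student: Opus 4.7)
The plan is to deduce Corollary~\ref{cor1} from Lemma~\ref{lemlower1} by a transfer-of-law argument, essentially in the same spirit as the proof of Lemma~\ref{lemlower1} itself. The left-hand side of the claimed inequality is a functional only of the joint distribution of the random vector $(X_1, W_{1/N}, W_{2/N}, \dots, W_1)$; hence it suffices to identify this joint distribution with the corresponding one in the setting of Lemma~\ref{lemlower1} and to invoke that lemma.

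First, I would establish uniqueness in law for the SDE $dX_t=(\delta-bX_t)\,dt+2\sqrt{X_t}\,dW_t$ with $X_0=x\in[0,\infty)$. This is classical: the Yamada--Watanabe criterion applies because the diffusion coefficient $2\sqrt{\,\cdot\,}$ is $\tfrac12$-Hölder continuous on $[0,\infty)$ and the drift $\delta-b\,(\cdot)$ is Lipschitz (see, e.g., Revuz \& Yor~\cite{MR1725357}), so pathwise uniqueness holds. Together with weak existence, this yields strong existence and uniqueness in law; in particular, the joint law of $(W,X)$ on $C([0,1],\R)\times C([0,1],[0,\infty))$ is determined solely by $(\delta,b,x)$, independently of the probability space on which $(W,X)$ is realized.

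Second, on a suitably enlarged probability space (take the product of the original space with an independent factor carrying copies of $\tilde{W}^{(1)},\tilde{W}^{\triangle},\tilde{W}^{(2)},B$ and of the i.i.d.\ sequence $(Y^{[n]})_{n\in\N_0}$), I would realize $(W,X)$ together with all the additional independent random objects required by Lemma~\ref{lemlower1}; Kallenberg~\cite[Theorem~21.14]{MR1876169} (already invoked in the proof of Lemma~\ref{lemlower1}) underlies such transfers. By the uniqueness step, the joint law of $(X_1, W_{1/N}, \dots, W_1)$ on the enlarged space coincides with the one on the original space. The conclusion of Lemma~\ref{lemlower1} applied on the enlarged space therefore yields the inequality \eqref{finaleq}, which immediately transfers back to the original probability space of Corollary~\ref{cor1}.

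The only point requiring minor bookkeeping is the adaptedness hypothesis imposed on $X$ in Lemma~\ref{lemlower1}, namely that $X$ be adapted to the augmented natural filtration of the driving Brownian motion $\tilde{W}$. This is automatic once strong existence is at hand, since then $X$ can be represented as a Borel-measurable path functional of $\tilde{W}$. All the quantitative content is already contained in Lemma~\ref{lemlower1}, so Corollary~\ref{cor1} requires no new analytic estimate beyond the transfer-of-law procedure just described.
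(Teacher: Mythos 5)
Your proof is correct and takes essentially the same route as the paper, which simply asserts that Corollary~\ref{cor1} ``follows directly from Lemma~\ref{lemlower1}.'' You have merely made explicit the transfer-of-law argument (pathwise uniqueness via Yamada--Watanabe, hence the joint law of $(X_1,W_{1/N},\dots,W_1)$ depends only on $(\delta,b,x)$, so one may realize the pair on a probability space carrying the auxiliary random objects needed for Lemma~\ref{lemlower1}) that the authors leave implicit.
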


\begin{proof}[Proof of Corollary~\ref{cor1}]
The claim follows directly from Lemma~\ref{lemlower1}.
\end{proof}

\begin{thm}[Cox-Ingersoll-Ross processes]
\label{thmgenpar}
Let 
$ T, a, \sigma \in (0,\infty) $, $ b, x \in [0,\infty) $
satisfy $ 2 a < \sigma^2 $,
let
$ 
  ( \Omega, \mathfrak{F}, \P ) 
$
be a probability space 
with a normal filtration $ ( \mathbb{F}_t )_{ t \in [0,T] } $,
let 
$ 
  W \colon [0,T] \times \Omega \to \R 
$
be a
$ 
  ( \mathbb{F}_t )_{ t \in [0,T] }
$-Brownian motion, 
let $ X \colon [0,T] \times \Omega \to [0,\infty) $
be a $ ( \mathbb{F}_t )_{ t \in [0,T] } $-adapted stochastic process
with continuous sample paths
which satisfies that for all $ t \in [0,T] $ it holds $ \P $-a.s.\ that
\begin{equation}
  X_t
  =
  x
  + 
  \int_0^t
  \left(
    a - b X_s
  \right)\mathrm{d}s
  +
  \int_0^t
  \sigma
  \sqrt{ X_s } \,\mathrm{d}W_s
  .
\end{equation}
Then there exists a real number $ c\in (0,\infty) $ such 
that for all $ N \in \N $ it holds that
\begin{equation}\label{finalfinaleq}
  \inf_{
    \substack{
      \varphi \colon \R^N \to \R
    \\
      \text{Borel-measurable}
    }
  }
  \E\!\left[ 
    \big|
      X_T
      -
      \varphi( 
        W_{ \frac{ T }{ N } }, W_{ \frac{ 2 T }{ N } }, \dots, 
        W_T
      )
    \big|
  \right]
  \geq 
  c
  \cdot 
  N^{ 
    - 
    { ( 2 a ) }/{ \sigma^2 }
  }.
\end{equation}
\end{thm}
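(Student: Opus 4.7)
The plan is to reduce Theorem~\ref{thmgenpar} to Corollary~\ref{cor1} by a standard change of time and space that converts the CIR process with parameters $(T,a,\sigma,b,x)$ into a squared Bessel process (in the sense of the SDE in Corollary~\ref{cor1}) on $[0,1]$.

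First, set $\delta=4a/\sigma^{2}$ and observe that the assumption $2a<\sigma^{2}$ gives $\delta\in(0,2)$, which is exactly the parameter range covered by Corollary~\ref{cor1}. Next, introduce the time-and-space rescaled processes
\begin{equation}
  Y_t=\tfrac{4}{\sigma^{2}T}\,X_{Tt}
  \qquad\text{and}\qquad
  \tilde W_t=\tfrac{1}{\sqrt{T}}\,W_{Tt},
  \qquad t\in[0,1],
\end{equation}
and the filtration $\tilde{\mathbb F}=(\tilde{\mathbb F}_t)_{t\in[0,1]}=(\mathbb F_{Tt})_{t\in[0,1]}$, which is a normal filtration on $(\Omega,\mathfrak F,\P)$. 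By the scaling property of Brownian motion and of the stochastic integral (cf.\ the proof of Lemma~\ref{lem101}), $\tilde W$ is a $\tilde{\mathbb F}$-Brownian motion, and a direct It\^o substitution in the SDE for $X$ yields that $Y$ is a $\tilde{\mathbb F}$-adapted, continuous, $[0,\infty)$-valued process satisfying for all $t\in[0,1]$ that $\P$-a.s.
\begin{equation}
  Y_t
  =\tfrac{4x}{\sigma^{2}T}
  +\int_{0}^{t}\!\bigl(\delta-(bT)\,Y_s\bigr)\,\mathrm ds
  +\int_{0}^{t}2\sqrt{Y_s}\,\mathrm d\tilde W_s.
\end{equation}
Hence $Y$ fits the setting of Corollary~\ref{cor1} with drift parameter $bT\in[0,\infty)$ and initial value $4x/(\sigma^{2}T)\in[0,\infty)$.

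Corollary~\ref{cor1} then provides a constant $\tilde c\in(0,\infty)$ such that for every $N\in\N$
\begin{equation}
  \inf_{\substack{\varphi\colon\R^{N}\to\R\\ \text{Borel-measurable}}}
  \E\!\left[\bigl|Y_{1}-\varphi\bigl(\tilde W_{1/N},\tilde W_{2/N},\dots,\tilde W_{1}\bigr)\bigr|\right]
  \geq \tilde c\cdot N^{-\delta/2}.
\end{equation}
It remains to translate this estimate back to $(X,W)$. Since $Y_{1}=\tfrac{4}{\sigma^{2}T}X_{T}$ and $\tilde W_{k/N}=T^{-1/2}W_{kT/N}$, and since the map $\R^{N}\ni(w_{1},\dots,w_{N})\mapsto T^{-1/2}(w_{1},\dots,w_{N})\in\R^{N}$ is a Borel-isomorphism, the infimum over Borel-measurable $\varphi$ does not change when its arguments are multiplied by the constant $T^{-1/2}$; pulling out the constant $\tfrac{4}{\sigma^{2}T}$ from the absolute value, one obtains
\begin{equation}
  \inf_{\substack{\varphi\colon\R^{N}\to\R\\ \text{Borel-measurable}}}
  \E\!\left[\bigl|X_{T}-\varphi\bigl(W_{T/N},W_{2T/N},\dots,W_{T}\bigr)\bigr|\right]
  \geq \tfrac{\sigma^{2}T}{4}\,\tilde c\cdot N^{-2a/\sigma^{2}},
\end{equation}
which is \eqref{finalfinaleq} with $c=\sigma^{2}T\tilde c/4\in(0,\infty)$.

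There is no real obstacle here; the whole argument is a two-line scaling that matches $\delta/2=2a/\sigma^{2}$. The only bookkeeping items to be careful about are that $\tilde W$ really is a Brownian motion with respect to $(\mathbb F_{Tt})_{t\in[0,1]}$ (so that $Y$ is adapted in the sense required by Corollary~\ref{cor1}) and that the supremum/infimum over Borel-measurable $\varphi$ is invariant under the bijective linear rescaling of the Brownian increments; both are standard.
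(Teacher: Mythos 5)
Your proposal is correct and matches the paper's own proof essentially verbatim: both define the rescaled process $Y_t = \rho X_{Tt}$ with $\rho = 4/(\sigma^2 T)$ and the rescaled Brownian motion $\tilde W_t = T^{-1/2}W_{Tt}$, check that $Y$ solves the normalized squared-Bessel SDE with dimension $\delta=4a/\sigma^2\in(0,2)$ and drift parameter $bT$, invoke Corollary~\ref{cor1}, and then undo the constant rescaling of both the target and the Brownian increments to pull the lower bound back to $(X,W)$. No substantive difference.
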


\begin{proof}[Proof of Theorem~\ref{thmgenpar}]
Throughout this proof let 
$ 
  ( {\bf F}_t )_{ t \in [0,1] }
$ 
be the normal filtration on 
$
  ( \Omega, \mathfrak{F}, \P) 
$ 
which satisfies for all 
$ t \in [0,1] $ that 
$
  {\bf F}_t = \mathbb{F}_{ t T } 
$,
let 
$
  {\bf W} \colon [0,1] \times \Omega \to \R
$ 
be the 
$
	( {\bf F}_t )_{ t \in [0,1] }
$-Brownian motion 
which satisfies for all $ t \in [0,1] $ 
that 
$
  {\bf W}_t = \frac{ 1 }{ \sqrt{T} } W_{ t T }
$,
let 
$
  \delta = \nicefrac{ 4a }{ \sigma^2 } 
$,
$
  {\bf b} = T b 
$,
$
  \rho = \nicefrac{ 4 }{ ( T \sigma^2 ) } \in (0,\infty)
$,
$
  {\bf x} = \rho x 
$,
let 
$
  {\bf X} \colon [0,1] \times \Omega \to [0,\infty)
$ 
be the
$ ( {\bf F}_t )_{ t \in [0,1] } $-adapted stochastic process
with continuous sample paths
which satisfies 
for all $ t \in [0,1] $ that 
$
  {\bf X}_t = \rho X_{ t T }
$.
Observe that it holds that
\begin{equation}\label{needthe1}
  \delta \in (0,2),
  \qquad 
  {\bf b} \in [0,\infty),
  \
  \text{and}
  \qquad
  {\bf x} \in [0,\infty).
\end{equation}
Moreover, note that for all $t\in [0,1]$ it holds $\P$-a.s.\ that
\begin{equation}\label{needthe2}
\begin{split}
  {\bf X}_t
&
=
  \rho X_{ t T }
\\
&
  =
  \rho x
  +
  \rho
  \int_0^{ t T }
  \left( a - b X_s \right)
  \mathrm{d}s
  +
  \rho 
  \int_0^{ t T }
    \sigma \sqrt{ X_s } 
    \,
  \mathrm{d}W_s
\\
&
  =
  \rho x
  +
  \rho T 
  \int_0^t
  \left( a - b X_{ s T } \right)
  \mathrm{d}s
  +
  \rho \sqrt{T}
  \int_0^t
    \sigma \sqrt{ X_{ s T } }
    \,
  \mathrm{d}{\bf W}_s
\\
&
=
  \rho x
  +
  \rho T \int_0^t
    \left(a-b {\bf X}_{s}/\rho\right)
\mathrm{d}s
+\rho\sqrt{T}\int_0^{t}
\sigma\sqrt{{\bf X}_{s}/\rho}\,\mathrm{d}{\bf W}_s
\\
&=
{\bf x}
+\int_0^{t}
\left(\delta-{\bf b} {\bf X}_{s}\right)
\mathrm{d}s
+2\int_0^{t}
\sqrt{{\bf X}_{s}}\,\mathrm{d}{\bf W}_s.
\end{split}
\end{equation}
Next observe that for all $N\in\N$ it holds that
\begin{equation}\label{needthe3}
\begin{split}
&\inf_{
    \substack{
      \varphi \colon \R^N \to \R
    \\
      \text{Borel-measurable}
    }
  }
  \E\!\left[ 
    \big|
      X_T
      -
      \varphi\big( 
        W_{ 
          \frac{ T }{ N } 
        }, 
        W_{ 
          \frac{ 2 T }{ N } 
        }, 
        \dots, 
        W_T
      \big)
    \big|
  \right]
\\
&
  = \frac{ 1 }{ \rho }
\cdot
\inf_{
    \substack{
      \varphi \colon \R^N \to \R
    \\
      \text{Borel-measurable}
    }
  }
  \E\!\left[ 
    \big|
      {\bf X}_1
      -
      \varphi( 
        {\bf W}_{ \frac{ 1 }{ N } }, {\bf W}_{ \frac{ 2 }{ N } }, \dots, 
        {\bf W}_1
      )
    \big|
  \right].
\end{split}
\end{equation}
Combining \eqref{needthe1}, \eqref{needthe2}, and \eqref{needthe3} with Corollary~\ref{cor1} establishes \eqref{finalfinaleq}.
The proof of Theorem~\ref{thmgenpar} is thus completed.
\end{proof}

\section*{Acknowledgement}

Special thanks are due to Andr\'{e} Herzwurm for a series of fruitful discussions on this work.
This project has been supported through the
SNSF-Research project 200021\_156603
``Numerical approximations of nonlinear stochastic ordinary and partial differential equations''.
We gratefully acknowledge the Institute for Mathematical Research (FIM) at ETH Zurich which provided office space and partially organized the short visit of the first author to ETH Zurich in 2016 when part of this work was done.

\bibliographystyle{plain}
\bibliography{bib}

\end{document}